\def\@copyrightspace{\relax}
\newtheorem{definition}{Definition}
\newtheorem{theorem}{Theorem}
\newtheorem{lemma}{Lemma}
\newtheorem{proposition}{Proposition}
\newtheorem{corollary}{Corollary}
\newtheorem{observation}{Observation}
\newtheorem*{remark}{Remark}
\title{Doubly Balanced Connected Graph Partitioning}
\author{Saleh Soltan\thanks{Research partially supported by DTRA grant HDTRA1-13-1-0021 and CIAN NSF ERC under grant EEC-0812072.}\\
       {Electrical Engineering}\\
       {Columbia University}\\
       {New York, NY}\\
       {saleh@ee.columbia.edu}
\and Mihalis Yannakakis\thanks{Research partially supported by NSF grant CCF-1320654.}\\
       {Computer Science}\\
       {Columbia University}\\
       {New York, NY}\\
       {mihalis@cs.columbia.edu}
\and Gil Zussman$^\ast$\\
       {Electrical Engineering}\\
       {Columbia University}\\
       {New York, NY}\\
       {gil@ee.columbia.edu}
}
\date{}
\begin{document}
\setlength{\textfloatsep}{10 pt}
\clearpage\maketitle
\thispagestyle{empty}
\begin{abstract}
We introduce and study the Doubly Balanced Connected graph Partitioning (DBCP) problem: Let $G=(V,E)$ be a connected graph with a weight (supply/demand) function $p:V\rightarrow \{-1,+1\}$ satisfying $p(V)=\sum_{j\in V} p(j)=0$.
The objective is to partition $G$ into $(V_1,V_2)$ such that $G[V_1]$ and $G[V_2]$ are connected,  $|p(V_1)|,|p(V_2)|\leq c_p$, and $\max\{\frac{|V_1|}{|V_2|},\frac{|V_2|}{|V_1|}\}\leq c_s$, for some constants $c_p$ and $c_s$. When $G$ is 2-connected, we show that a solution with $c_p=1$ and  $c_s=3$ always exists and can be found in polynomial time. Moreover, when $G$ is 3-connected, we show that there is always a `perfect' solution (a partition with $p(V_1)=p(V_2)=0$ and $|V_1|=|V_2|$, if $|V|\equiv 0 (\mathrm{mod}~4)$), and it can be found in polynomial time. Our techniques can be extended, with similar results, to the case in which the weights are arbitrary (not necessarily $\pm 1$), and  to the case that $p(V)\neq 0$ and the excess supply/demand should be split evenly.
They also apply to the problem of partitioning a graph with two types of nodes into two large connected subgraphs that preserve approximately the proportion of the
two types.
\end{abstract}
\newpage
\section{Introduction}
Power Grid Islanding is an effective method to mitigate cascading failures in power grids~\cite{sun2002two}. The challenge is to partition the network into smaller connected components, called \emph{islands},  such that each island can operate independently for a while. In order for an island to operate, it is necessary that the power supply and demand at that island are almost equal.\footnote{If the supply and demand are not exactly equal but still relatively close, load shedding/generation curtailing can be used in order for the island to operate.} Equality of supply and demand in an island, however, may not be sufficient for its independent operation. It is also important that the infrastructure in that island has the physical capacity to safely transfer the power from the supply nodes to the demand nodes. When the island is large enough compared to the initial network, it is more likely that it has enough capacity. This problem has been studied in the power systems community but almost all the algorithms provided in the literature are heuristic methods that have been shown to be effective only by simulations~\cite{sun2002two,sanchez2014hierarchical,pahwa2013optimal,fan2012mixed}.


Motivated by this application, we formally introduce and study the Doubly Balanced Connected graph Partitioning (DBCP) problem:
Let $G=(V,E)$ be a connected graph with a weight (supply/demand) function $p:V\rightarrow \mathbb{Z}$ satisfying $p(V)=\sum_{j\in V} p(j)=0$. The objective is to partition $V$ into $(V_1,V_2)$ such that $G[V_1]$ and $G[V_2]$ are connected, $|p(V_1)|,|p(V_2)|\leq c_p$, and $\max\{\frac{|V_1|}{|V_2|},\frac{|V_2|}{|V_1|}\}\leq c_s$, for some constants $c_p$ and $c_s$. We also consider the case that  $p(V)\neq 0$, in which the
excess supply/demand should be split roughly evenly.

The problem calls for a partition into two connected subgraphs that simultaneously balances two objectives, (1) the supply/demand within each part, and (2) the sizes of the parts.
The connected partitioning problem with only the size objective has been studied previously.
In the most well-known result, Lov\'{a}z and Gyori~\cite{lovasz1977homology,gyori1976division} independently proved, using different methods, that every $k$-connected graph can be partitioned into $k$ arbitrarily sized connected subgraphs. However, neither of the proofs is constructive, and there are no known
polynomial-time algorithms to find such a partition for $k>3$.
For $k=2$, a linear time algorithm is provided in~\cite{suzuki1990linear} and for $k=3$
an $O(|V|^2)$ algorithm is provided in~\cite{wada1994efficient}.\footnote{For $k=2$, a much simpler approach than the one in~\cite{suzuki1990linear} is to use the $st$-numbering~\cite{lempel1967algorithm} for 2-connected graphs.}
The complexity of the problem with the size objective and related optimization problems
have been studied in~\cite{dyer,chlebikova1996approximating,chataigner2007approximation}
and there are various NP-hardness and inapproximability results.
Note that the size of the cut is not of any relevance here (so the
extensive literature on finding balanced partitions, not necessarily connected,
that minimize the cut is not relevant.)

The objective of balancing the supply/demand alone, when all $p(i)$ are $\pm1$, can also be seen as an extension for the objective of balancing the size (which corresponds to $p(i)=1$).
Our bi-objective problem of balancing both supply/demand and size, can be seen also
as an extension of the problem of finding a partition that balances the size for two types of nodes
simultaneously: Suppose the nodes of a graph are partitioned into red and blue nodes.
Find a partition of the graph into two large connected subgraphs that splits approximately evenly both
the red and the blue nodes.

We now summarize our results and techniques. Since the power grids are designed to withstand a single failure (``$N-1$" standard)~\cite{bienstock2016electrical}, and therefore 2-connected, our focus is mainly on the graphs that are at least 2-connected.
We first, in Section \ref{sec:BPGI}, study  the connected partitioning problem with
only the supply/demand balancing objective, and show results that
parallel the results for balancing size alone, using similar techniques:
The problem is NP-hard in general.
For 2-connected graphs and weights $p(i)=\pm1$, there is always a perfectly balanced partition
and we can find it easily using an $st$-numbering. For 3-connected graphs and weights $p(i)=\pm1$,
there is a perfectly balanced partition into three connected graphs, and we can find it
using a nonseparating ear decomposition of 3-connected graphs~\cite{cheriyan1988finding} and similar ideas as in~\cite{wada1994efficient}.

The problem is more challenging when we deal with both balancing objectives, supply/demand and size.
This is the main focus and occupies the bulk of this paper.
Our main results are existence results and algorithms for 2- and 3-connected graphs.
It is easy to observe that we cannot achieve perfection in one objective ($c_p=0$ or $c_s =1$)
without sacrificing completely the other objective.
We show that allowing the supply/demand of the parts to be off balance by at most the weight of one node
suffices to get a partition that is roughly balanced also with respect to size.

First, in Section~\ref{subsec:3-connected}, we study the case of 3-connected graphs
since we use this later as the basis of handling 2-connected graphs.
We show that if $\forall i,~p(i)=\pm1$, there is a partition that is perfectly balanced
with respect to both objectives, if $|V|\equiv0 (\mathrm{mod}~4)$ (otherwise the
sizes are slightly off for parity reasons); for general $p$, the partition is perfect
in both objectives up to the weight of a single node. Furthermore, the partition can be constructed in
polynomial time. Our approach uses the convex embedding characterization of $k$-connectivity
studied by Linial, Lov\'{a}z, and Wigderson~\cite{linial1988rubber}. We need to adapt it for
our purposes so that the convex embedding also has certain desired geometric properties,
and for this purpose we use the nonseparating ear decomposition of 3-connected graphs
of~\cite{cheriyan1988finding} to obtain a suitable embedding.

Then, in Section~\ref{subsec:2-connected}, we analyze the case of 2-connected graphs.
We reduce it to two subcases: either (1) there is a separation pair that splits the graph
into components that are not very large, or (2) we can perform a series of
contractions to achieve a 3-connected graph whose edges represent
contracted subgraphs that are not too large. We provide a good partitioning algorithm
for case (1), and for case (2) we extend the algorithms for 3-connected graphs
to handle also the complications arising from edges representing contracted subgraphs.
Finally, in Section~\ref{sec:blue_red}, we briefly discuss the  problem
of finding a connected partitioning of a graph with two types of nodes
that splits roughly evenly both types.

The body of the paper contains proofs and sketches for some of the results,
and Appendices~\ref{sec:proof1}, \ref{sec:proof2}, and \ref{sec:proof3} contain the remaining proofs.
Graph-theoretic background and definitions (e.g., the notions of $st$-numbering,
nonseparating ear decomposition, convex embedding of $k$-connected graphs)
are given in Appendix~\ref{sec:pre}.

\section{Balancing the Supply/Demand Only}\label{sec:BCPI}
In this section, we study the single objective problem of
finding a partition of the graph into connected subgraphs that balances (approximately)
the supply and demand in each part of the partition, without any regard
to the sizes of the parts.
We can state the optimization problem as follows, and will refer to it as the Balanced Connected Partition with Integer weights (BCPI) problem.
\begin{definition}
Given a graph $G=(V,E)$ with a weight (supply/demand) function $p~:~V~\rightarrow~\mathbb{Z}$ satisfying $\sum_{j\in V} p(j)=0$. The BCPI problem is the problem of partitioning $V$ into $(V_1,V_2)$ such that
\begin{enumerate}
\item $V_1\cap V_2=\emptyset$ and $V_1\cup V_2=V$,
\item $G[V_1]$ and $G[V_2]$ are connected,
\item $|p(V_1)|+|p(V_2)|$ is minimized, where $p(V_i)=\sum_{j\in V_i} p(j).$
\end{enumerate}
\end{definition}

Clearly, the minimum possible value for $|p(V_1)|+|p(V_2)|$ that we can hope for is 0,
which occurs iff $p(V_1)=p(V_2) =0$.
It is easy to show that the problem of determining whether there exists such a `perfect' partition
(and hence the BCPI problem) is strongly NP-hard.
The proof is very similar to analogous results concerning the partition of a graph
into two connected subgraphs with equal sizes (or weights, when nodes have positive weights)
\cite{chataigner2007approximation,dyer}

\begin{proposition}\label{lem:BCPI_hard}
(1) It is strongly NP-hard to determine whether there is a solution to the BCPI problem with value 0, even when $G$ is 2-connected. \\
(2) If $G$ is not 2-connected, then this problem is NP-hard even when $\forall i, p(i)=\pm1$.
\end{proposition}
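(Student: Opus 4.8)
The plan is to prove both parts by reduction from a suitable version of the partition-into-two-equal-connected-subgraphs problem, which is known to be NP-hard (cf.\ \cite{dyer,chataigner2007approximation}). For part (2), the unweighted case, I would start from the problem: given a connected graph $H$ with $|V(H)|$ even, decide whether $V(H)$ can be split into $(A,B)$ with $H[A],H[B]$ connected and $|A|=|B|$. This is NP-hard even for connected (but not necessarily 2-connected) graphs. Given such an $H$, build $G$ by taking two disjoint copies $H^+$ and $H^-$ of $H$ and joining them by a single edge (or identifying them at a cut vertex); assign $p(i)=+1$ to all vertices of $H^+$ and $p(i)=-1$ to all vertices of $H^-$. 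Then $p(V)=0$ and $\forall i,\,p(i)=\pm1$, and $G$ is connected but has a cut edge/vertex, so it is not 2-connected. The key observation is that any partition of $G$ into two connected parts $(V_1,V_2)$ must, because of the unique bridge, leave one of $H^+,H^-$ entirely inside one part; one then checks that a perfect partition of $G$ (value $0$, i.e.\ $p(V_1)=p(V_2)=0$) exists if and only if $H$ admits an equal-size connected bipartition. One direction is immediate (lift a bipartition of $H$ to both copies symmetrically); the other direction requires arguing that the bridge forces the split to be ``balanced across'' the two copies in the only possible way, which is where the main care is needed.

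For part (1), the 2-connected case, a single bridge no longer works, so I would instead use a gadget that glues two copies of $H$ along a larger interface while preserving 2-connectivity and still forcing the cut to behave. One clean route: take the NP-hard instance to be the \emph{weighted} equal-weight connected bipartition problem on a 2-connected graph $H$ with positive integer vertex weights $w$ (this weighted version on 2-connected graphs is the one shown NP-hard in \cite{chataigner2007approximation}), and simulate positive weights by $\pm1$ blow-ups. Concretely, replace each vertex $v$ of $H$ by a small 2-connected ``blob'' $B_v$ whose number of $+1$ vertices minus number of $-1$ vertices equals $w(v)$ and whose total size is the same for all $v$, wiring the blobs together along the edges of $H$ so that the result $G$ is 2-connected and any connected partition of $G$ restricts to a connected partition of $H$ that respects the blobs (i.e.\ does not split a blob in a way that changes which side its net weight lands on). Ensuring $p(V)=0$ globally is arranged by a final balancing blob. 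A perfect partition of $G$ then corresponds exactly to a connected bipartition of $H$ into two parts of equal $w$-weight, giving strong NP-hardness (all weights are polynomially bounded, indeed $\pm1$ before contraction).

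The steps, in order, are: (i) fix the source NP-hard problem and its precise graph class (connected for part (2), 2-connected with positive weights for part (1)); (ii) describe the gadget construction of $G$ and verify its connectivity class (has a bridge/cut-vertex for (2); is 2-connected for (1)); (iii) prove the forward direction (a good bipartition of $H$ yields a value-$0$ partition of $G$) — routine; (iv) prove the reverse direction, analyzing how a connected partition of $G$ must interact with the bridge (part (2)) or with the blobs (part (1)), and conclude it induces the required balanced bipartition of $H$; (v) note that all weights are $\pm1$ (or polynomially bounded), so the hardness is strong. I expect step (iv) to be the main obstacle, and within it the genuinely delicate point is the 2-connected case: designing the blobs so that no connected partition of $G$ can ``cheat'' by slicing a blob to shift its net contribution, while keeping $G$ globally 2-connected and keeping every blob the same size. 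Choosing blobs that are themselves 2-connected and attached to their neighbors at two well-separated vertices, together with a parity/size argument, should close this gap; the rest is bookkeeping analogous to the known reductions for the size-only problem.
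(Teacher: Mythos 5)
Your proposal has genuine gaps in both parts, and the second one is fatal in a way that the paper's own results make precise.

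For part (2), the two-copies-plus-a-bridge construction does not work. The bridge $\{u^+,u^-\}$ can lie inside at most one of the two parts, so at least one part, say $V_2$, is entirely contained in a single copy; but then $|p(V_2)|=|V_2|>0$, so $G$ \emph{never} admits a value-$0$ partition, no matter what $H$ is. The forward direction fails for the same reason: lifting an equal connected bipartition $(A,B)$ of $H$ to $V_1=A^+\cup B^-$, $V_2=B^+\cup A^-$ leaves one of the two parts disconnected, since only one inter-copy edge exists. (Identifying the copies at a vertex has the same defect and additionally breaks $p(V)=0$.) The paper instead derives part (2) from part (1): it first proves hardness with polynomially bounded integer weights on a 2-connected graph, and then replaces each weighted node by a pendant path of $\pm1$-weighted nodes of the corresponding length --- it is precisely this path-attachment that destroys 2-connectivity, which is why the $\pm1$ claim is only made for non-2-connected graphs.

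For part (1), your plan to blow each weight up into a 2-connected blob of $\pm1$ nodes while keeping $G$ 2-connected cannot succeed: Proposition~\ref{lem:2_poly}(2) of this very paper shows that every 2-connected graph with all weights $\pm1$ has a partition with $p(V_1)=p(V_2)=0$, so any such instance is a trivial ``yes'' and encodes nothing. The ``cheating by slicing a blob'' you flag as the delicate point is therefore not a technicality to be engineered away --- it is unavoidable. Note also that part (1) does not require $\pm1$ weights, only polynomially bounded ones, so the blow-up buys nothing. The paper's actual argument is a direct reduction from X3C: $V=X\cup C\cup\{a,b\}$ with each $C_j$ adjacent to its three elements and to both $a$ and $b$ (this graph is 2-connected), with weights $p(a)=2q$, $p(b)=9q^2+q$, $p(C_j)=-1$, $p(x_i)=-3q$, all polynomial in the instance size; a value-$0$ partition exists iff $C$ contains an exact cover. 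If you want to salvage your outline, you would need to replace both gadgets with something along these lines.
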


Although it is NP-hard to tell whether there is a solution satisfying $p(V_1)=p(V_2) =0$,
even when $\forall i, p(i)=\pm1$, in this case, if the graph $G$ is 2-connected there is always such a solution.
For general weights $p$, there is a solution such that $|p(V_1)|,|p(V_2)|\leq \max_{j\in V}|p(j)|/2$
and it can be found easily in linear time using the $st$-numbering between two nodes (see the Appendix \ref{sec:proof1}).
\begin{proposition}\label{lem:2_poly}
Let $G$ be a 2-connected graph and $u,v$ any two nodes in $V$ such that $p(u) p(v)>0$.\\
(1) There is a solution such that $u\in V_1$, $v\in V_2$, and $|p(V_1)|=|p(V_2)|\leq \max_{j\in V}|p(j)|/2$.\\
(2) If $\forall i, p(i)=\pm1$, we can find a solution such that $u\in V_1$, $v\in V_2$, and $p(V_1)=p(V_2)=0$.\\
In both cases, the solution can be found in $O(|E|)$ time.
\end{proposition}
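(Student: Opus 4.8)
The plan is to use the $st$-numbering of a 2-connected graph, which is a classical tool: for any edge $\{s,t\}$ (or indeed any two distinct vertices $s,t$), a 2-connected graph admits an ordering $v_1=s,v_2,\dots,v_n=t$ of $V$ such that every $v_i$ with $1<i<n$ has a neighbor earlier in the ordering and a neighbor later in the ordering. The key consequence is that for every index $i$, both prefixes $\{v_1,\dots,v_i\}$ and suffixes $\{v_{i+1},\dots,v_n\}$ induce connected subgraphs of $G$. So if we set $s=u$ and $t=v$ (using 2-connectivity to get an $st$-numbering between the specified pair $u,v$, even if $\{u,v\}\notin E$), then every ``prefix cut'' $(V_1,V_2)=(\{v_1,\dots,v_i\},\{v_{i+1},\dots,v_n\})$ with $1\le i<n$ is automatically a feasible connected partition with $u\in V_1$ and $v\in V_2$. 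It remains only to choose $i$ so that the supply/demand is balanced.

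For part (2), with all $p(j)=\pm1$, consider the function $f(i)=p(\{v_1,\dots,v_i\})$. We have $f(0)=0$, $f(n)=p(V)=0$, and $f$ changes by exactly $\pm1$ at each step. Since $p(u)=p(v)>0$, we may assume WLOG $p(u)=p(v)=+1$, so $f(1)=+1>0$ and $f(n-1)=f(n)-p(v)=-1<0$. A discrete intermediate value argument then gives some index $1\le i\le n-1$ with $f(i)=0$, i.e.\ $p(V_1)=0$, and hence $p(V_2)=p(V)-p(V_1)=0$ as well; moreover this $i$ satisfies $1\le i$ (since $f(1)=1\ne0$ forces $i\ge 2$ actually, fine) and $i\le n-1$, so both parts are nonempty and $u\in V_1$, $v\in V_2$. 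For part (1), with general integer weights, $f$ now jumps by $p(v_i)$ at step $i$, so it need not hit $0$ exactly, but we can pick the largest $i$ with $f(i)>0$ (this set is nonempty since $f(1)=p(u)>0$, and it excludes $n$ since $f(n)=0$, so such $i$ exists with $1\le i\le n-1$); then $f(i)>0\ge f(i+1)$ forces $0<f(i)\le f(i+1)+|p(v_{i+1})|$, hmm — more carefully, $f(i)\le p(v_{i+1})$ is false in general, so instead choose $i$ to minimize $|f(i)|$ over $1\le i\le n-1$ among the ``balanced-ish'' crossing point: pick $i$ so that $f(i)$ and $f(i+1)$ have opposite signs (or one is zero), which exists since $f(1)>0$ and $f(n-1)=-p(v)<0$; then $\min(|f(i)|,|f(i+1)|)\le |f(i)-f(i+1)|/2=|p(v_{i+1})|/2\le \max_j|p(j)|/2$, and taking whichever of $V_1=\{v_1,\dots,v_i\}$ or $V_1'=\{v_1,\dots,v_{i+1}\}$ achieves the smaller value gives $|p(V_1)|=|p(V_2)|\le \max_j|p(j)|/2$ (equality of the two absolute values because $p(V_1)+p(V_2)=p(V)=0$). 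One should double-check the endpoint bookkeeping so that the chosen part still has $u\in V_1$ and $v\in V_2$, which holds as long as the cut index stays in $\{1,\dots,n-1\}$.

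The running time claim follows because an $st$-numbering can be computed in $O(|E|)$ time (e.g.\ via the classical algorithm of Even–Tarjan / Lempel–Even–Cederbaum, cited in the excerpt as \cite{lempel1967algorithm}), and then scanning the prefix sums $f(i)$ to locate the desired crossing index is $O(n)=O(|E|)$.

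I do not expect any serious obstacle here: the only thing to be careful about is the ``intermediate value'' step for general weights, where exact zero may be unattainable and one must argue the $\max_j|p(j)|/2$ bound at a sign change, and the bookkeeping ensuring the endpoints $u,v$ land in the correct parts (which is immediate from $s=u$, $t=v$ in the $st$-numbering and the cut index lying strictly between $1$ and $n$). The mild subtlety that $u$ and $v$ need not be adjacent is handled by the fact that 2-connectivity guarantees an $st$-numbering for any pair of vertices, not just adjacent ones.
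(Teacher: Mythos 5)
Your proof is correct and follows essentially the same route as the paper's: an $st$-numbering from $u$ to $v$ (the paper adds the edge $\{u,v\}$ WLOG, which amounts to the same thing), prefix sums of $p$, and the observation that at a sign change the smaller of the two prefix values is at most $|p(v_{i^*+1})|/2$, with part (2) following because half-integrality forces the value to zero. The only cosmetic difference is that the paper deduces (2) from (1) rather than proving it separately.
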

\begin{remark}
The bound in Proposition~\ref{lem:2_poly} (1) is tight. A simple example is a cycle of length 4 like $(v_1,v_2,v_3,v_4)$ with $p(v_1)=-p$, $p(v_2)=-p/2$, $p(v_3)=p$, and $p(v_4)=p/2$. It is easy to see that in this example $|p(V_1)|+|p(V_2)|= \max_{j\in V}|p(j)|=p$ is the best that one can do.
\end{remark}





\subsection{Connected Partitioning into Many Parts}
The BCPI problem can be extended to partitioning a graph into $k=3$ or more parts.
Let $G=(V,E)$ be a graph with a weight function $p~:~V~\rightarrow~\mathbb{Z}$ satisfying $\sum_{j\in V} p(j)=0$. The $\text{BCPI}_k$ problem is the problem of partitioning $G$ into $(V_1,V_2,\dots,V_k)$ such that for any $1\leq i\leq k$, $G[V_i]$ is connected and $\sum_{i=1}^k |p(V_i)|$ is minimized.

In the following proposition, we show that for $k=3$, if $ p(i)=\pm1, \forall i$, then
there is always a perfect partition (i.e., with $p(V_1)=p(V_2)=p(V_3)=0$) and it can be found efficiently.
For general $p$, we can find a partition such that $|p(V_1)|+|p(V_2)|+|p(V_3)|\leq 2 \max_{j\in V}|p(j)|$.
The proof and the algorithm use a similar approach as the algorithm in~\cite{wada1994efficient}
for partitioning a 3-connected graph to three connected parts with prescribed sizes, using the nonseparating ear decomposition of 3-connected graphs as described in Subsection~\ref{subsec:ear-decomposition}.
The proof is given in the Appendix \ref{sec:proof1}.

\begin{proposition}\label{lem:3-connected-zero-p}
Let $G$ be a 3-connected graph and $u,v,w$ three nodes in $V$ such that $p(u), p(v),p(w)>0$ or $p(u), p(v),p(w)<0$.\\
(1) There is a solution such that $u\in V_1$, $v\in V_2$, $w\in V_3$, and $|p(V_1)|+|p(V_2)|+|p(V_3)|\leq 2 \max_{j\in V}|p(j)|$. \\
(2) If $\forall i, p(i)=\pm1$, then there is a solution such that $u\in V_1$, $v\in V_2$, $w\in V_3$, and
$|p(V_1)|=|p(V_2)|=|p(V_3)|=0$.\\
In both cases, the solution can be found in $O(|E|)$ time.
\end{proposition}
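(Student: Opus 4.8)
The plan is to exploit the nonseparating ear decomposition of a 3-connected graph, in the same spirit as the linear-time three-way size-partition algorithm of Wada and Kawaguchi~\cite{wada1994efficient}. Recall (see Subsection~\ref{subsec:ear-decomposition}) that a 3-connected graph $G$ admits a nonseparating ear decomposition starting from any prescribed vertex, say $w$: there is a sequence $P_0,P_1,\dots,P_m$ where $P_0$ is a cycle through $w$ and some neighbor, each $P_i$ ($i\ge 1$) is an ear whose endpoints lie on $P_0\cup\cdots\cup P_{i-1}$ and whose internal vertices are new, every $P_i$ with $i<m$ leaves the graph $G\setminus V(P_i)$ (internal vertices only) connected and 2-connected-ish, and the final ear $P_m$ is a single edge. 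Concretely I will choose the decomposition so that $w$ is an endpoint used to build $P_0$ and so that $u$ and $v$ end up being available to anchor two of the three parts. The key combinatorial device is that, as in~\cite{wada1994efficient}, processing the ears in reverse order $P_m,P_{m-1},\dots,P_0$ lets us peel off vertices one at a time while always keeping the remaining graph connected, and simultaneously keeping the set peeled so far connected.

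The first step is to reduce to the following one-dimensional sweeping statement: there is an ordering $x_1,x_2,\dots,x_n$ of $V$ such that (i) $x_1=w$, say $u$ and $v$ appear as $x_j$ and $x_k$ at controlled positions, (ii) every prefix $\{x_1,\dots,x_i\}$ induces a connected subgraph, and (iii) every suffix $\{x_i,\dots,x_n\}$ induces a connected subgraph. Such a "doubly connected" ordering is exactly what the nonseparating ear decomposition provides (this is the content of the structural lemmas behind~\cite{wada1994efficient}, which I would cite rather than reprove). Actually for the three-part version I need something slightly stronger: an ordering in which I can cut at two points $i<i'$ so that $V_1=\{x_1,\dots,x_i\}$, $V_2=\{x_{i+1},\dots,x_{i'}\}$, $V_3=\{x_{i'+1},\dots,x_n\}$ are all connected, with $u\in V_1$, $v\in V_2$, $w\in V_3$ — this is precisely the guarantee extracted from the nonseparating ear decomposition rooted appropriately, and it is the place I will lean hardest on the cited machinery.

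Given such an ordering, the second step is the weight accounting, which is now a purely one-dimensional argument. Define the prefix sums $s_i=p(\{x_1,\dots,x_i\})$, with $s_0=0$ and $s_n=p(V)=0$. For part~(2), all $p(x_i)=\pm 1$, so consecutive prefix sums differ by exactly $1$; hence the sequence $s_0,s_1,\dots,s_n$ is a lattice walk from $0$ to $0$ with $\pm1$ steps, and by the discrete intermediate value theorem it hits the value $0$ at least twice after the start (indeed at every "return to origin"). Moreover, since $p(u)>0$ and $p(v)>0$ (or all negative), I can use the freedom in where $u,v,w$ sit to pick two zero-crossings $i<i'$ that separate them correctly; then $p(V_1)=s_i=0$, $p(V_3)=s_n-s_{i'}=0$, and $p(V_2)=s_{i'}-s_i=0$, giving the perfect partition. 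For part~(1) with general integer weights, the same walk now has steps of size $|p(x_{i+1})|$, so it need not hit $0$; instead, choosing $i$ to be the last index with $s_i\ge 0$ changing to $s_{i+1}<0$ (or the analogous crossing) forces $|s_i|\le \max_j|p(j)|$ — actually one gets $|s_i|<\max_j |p(j)|$ on the tighter side and at most $\max_j|p(j)|$ in general — and doing this twice, once for each of the two cut points, yields $|p(V_1)|,|p(V_3)|\le \max_j|p(j)|$ and hence $|p(V_2)|=|s_{i'}-s_i|\le |p(V_1)|+|p(V_3)|$; a slightly more careful choice (picking the crossing on the correct side relative to the sign pattern of $u,v,w$) brings the total $|p(V_1)|+|p(V_2)|+|p(V_3)|$ down to $2\max_{j\in V}|p(j)|$ as claimed. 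Finally, the running time: the nonseparating ear decomposition of a 3-connected graph and the induced doubly-connected ordering can be computed in $O(|E|)$ time~\cite{cheriyan1988finding,wada1994efficient}, and the prefix-sum scan to locate the two cut points is $O(|V|)$, so the whole algorithm is $O(|E|)$.

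The main obstacle is Step~two's structural claim — producing a linear ordering that is connected on both prefixes and suffixes \emph{and} in which the three designated vertices $u,v,w$ land in the three intended intervals. A naive $st$-numbering gives doubly-connected prefixes/suffixes only for two parts; the jump to three parts with prescribed anchor vertices is exactly what forces the use of the nonseparating ear decomposition, and some care is needed to root that decomposition so that $w$ is the last vertex and $u,v$ are positioned to allow a valid split. I expect to handle this by invoking the decomposition-and-sweep construction of~\cite{wada1994efficient} essentially verbatim, noting only that their size targets are replaced by our weight targets and that the three-way cut is chosen by the prefix-sum criterion above rather than by vertex count.
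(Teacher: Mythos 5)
There is a genuine gap in your Step~2 reduction. You reduce the problem to finding a single linear order $x_1,\dots,x_n$ of $V$ that can be cut at two points so that all three resulting intervals are connected, and you assert this is ``precisely the guarantee extracted from the nonseparating ear decomposition.'' It is not. An $st$-numbering (or any ``doubly connected'' order) guarantees that every \emph{prefix} and every \emph{suffix} is connected, but the \emph{middle} interval generally is not: in a theta graph with an $st$-numbering $s,a,b,c,t$ where $a,b,c$ are the midpoints of three internally disjoint $s$--$t$ paths, the middle interval $\{a,b,c\}$ is an independent set. Worse, an order in which \emph{every} interval is connected forces consecutive vertices to be adjacent, i.e., a Hamiltonian path, which 3-connected graphs need not have. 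And you cannot escape by saying only the two cut points you actually use must work, because those cut points are dictated by the prefix-sum/zero-crossing criterion, so you would need all three parts connected at weight-determined positions you do not control. The paper's proof (following Wada--Kawaguchi) avoids a single global order entirely: it takes the nonseparating ear decomposition through the edge $\{u,v\}$ avoiding $w$, chooses by a sign-change argument the prefix of ears $V_j$ at which the cumulative weight crosses zero, and then takes $V_1$ and $V_2$ to be a \emph{prefix} and a \emph{suffix} of an $st$-numbering of $G[V_j]$ between $u$ and $v$ (both automatically connected for any cut positions, which is what lets the weight criterion choose them freely), while $V_3$ is everything else --- the middle of that numbering plus all later ears plus possibly part of the next ear --- whose connectivity is exactly what the nonseparating property ($\overline{G}_j$ connected, each internal ear vertex having a neighbor outside) supplies. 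That structural idea, not a two-cut sweep of one order, is the missing ingredient; recovering it requires the case analysis on where the zero crossing falls (inside $G[V_j]$ versus inside the next ear $Q_{j+1}$) that occupies most of the paper's proof.

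A secondary quantitative issue: in part~(1) your choice ``last index with $s_i\ge 0$'' only bounds each cut value by $\max_j|p(j)|$, giving $|p(V_2)|\le 2\max_j|p(j)|$ and a total of $4\max_j|p(j)|$. To reach the claimed $2\max_j|p(j)|$ you must, at each crossing, take whichever of the two straddling prefixes has the smaller absolute value, which is at most $\max_j|p(j)|/2$ since the two values straddle zero and differ by a single vertex weight; then $|p(V_1)|,|p(V_2)|\le \max_j|p(j)|/2$ and $|p(V_3)|=|p(V_1)+p(V_2)|\le\max_j|p(j)|$. This also immediately yields part~(2) from part~(1), since $\max_j|p(j)|/2=1/2$ forces the integer values $p(V_1),p(V_2)$ to be $0$.
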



\section{Balancing Both Objectives}\label{sec:BPGI}
In this section, we formally define and study the Doubly Balanced Connected graph Partitioning (DBCP) problem.

\begin{definition}
Given a graph $G=(V,E)$ with a weight (supply/demand) function $p~:~V~\rightarrow~\mathbb{Z}$ satisfying $p(V)=\sum_{j\in V} p(j)=0$ and constants $c_p\geq 0$, $c_s\geq 1$. The \emph{DBCP problem} is the problem of partitioning $V$ into $(V_1,V_2)$ such that
\begin{enumerate}
\item $V_1\cap V_2=\emptyset$ and $V_1\cup V_2=V$,
\item $G[V_1]$ and $G[V_2]$ are connected,
\item $|p(V_1)|,|p(V_2)|\leq c_p$ and $\max\{\frac{|V_1|}{|V_2|},\frac{|V_2|}{|V_1|}\}\leq c_s$, where $p(V_i)=\sum_{j\in V_i} p(j).$
\end{enumerate}
\end{definition}

\begin{remark}
Our techniques apply also to the case that $p(V)\neq0$.
In this case, the requirement 3 on $p(V_1)$ and $p(V_2)$ is
$|p(V_1)-p(V)/2|,|p(V_2)-p(V)/2|\leq c_p$, i.e., the excess supply/demand
is split approximately evenly between the two parts.
\end{remark}

We will concentrate on 2-connected and 3-connected graphs and
show that we can construct efficiently good partitions.
For most of the section we will focus on the case that $p(i)=\pm1, \forall i \in V$.
This case contains all the essential ideas.
All the techniques generalize to the case of arbitrary $p$,
and we will state the corresponding theorems.

We observed in Section 2 that if the graph is 2-connected and $p(i)=\pm1, \forall i \in V$ then there is always a connected partition
that is perfect with respect to the weight objective,
$p(V_1)=p(V_2)=0$, i.e., (3) is satisfied with $c_p=0$.
We know also from \cite{lovasz1977homology,gyori1976division}
that there is always a connected partition
that is perfect with respect to the size objective,
$|V_1|=|V_2|$, i.e., condition 3 is satisfied with $c_s=1$.
The following observations show that combining the two objectives makes
the problem more challenging.
If we insist on $c_p=0$, then $c_s$ cannot be bounded in general,
(it will be $\Omega(|V|)$),
and if we insist on $c_s=1$, then $c_p$ cannot be bounded.
The series-parallel graphs of Figure~\ref{fig:example_bound} provide simple counterexamples.

\begin{figure}[t]
\centering
\includegraphics[scale=0.6]{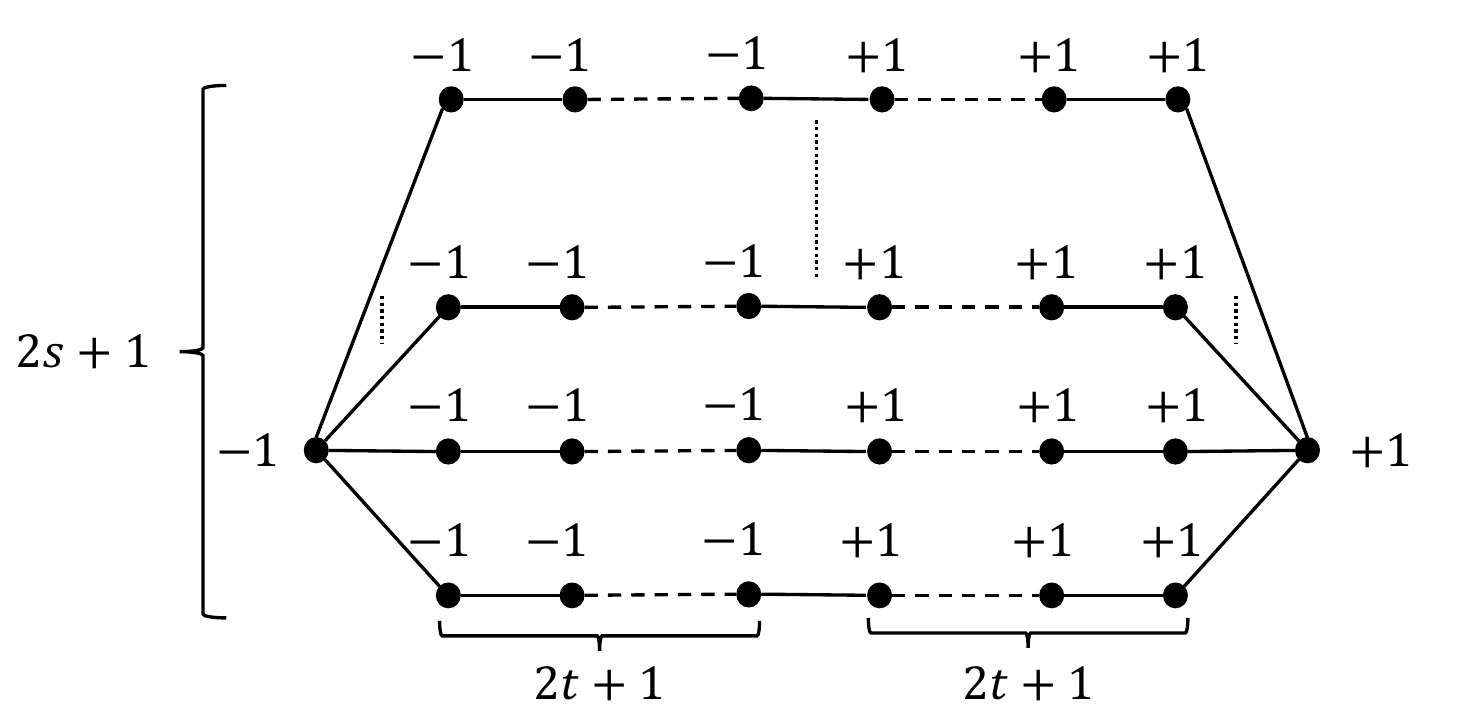}
\caption{Series-parallel graphs with $2s+1$ paths of length $4t+2$ used in Observations~\ref{obs:p=0} and \ref{obs:s=1}.}
\label{fig:example_bound}
\end{figure}
\begin{observation}\label{obs:p=0}
If $c_p=0$, then for any $c_s<|V|/2-1$, there exist a 2-connected graph $G$ such that the DBCP problem does not have a solution even when $\forall i, p(i)=\pm1$.
\end{observation}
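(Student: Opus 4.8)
\textbf{Proof proposal for Observation~\ref{obs:p=0}.}
The plan is to exhibit an explicit family of 2-connected graphs for which any connected partition with $p(V_1)=p(V_2)=0$ must be extremely unbalanced in size. Consider the series-parallel graph $G$ of Figure~\ref{fig:example_bound}: two terminal nodes $a$ and $b$ joined by $2s+1$ internally disjoint paths, each of length $4t+2$ (so each path has $4t+1$ internal vertices). First I would fix the weights: put $p(a)=p(b)=+1$ if convenient for parity, and along each path alternate the signs $+1,-1,+1,-1,\dots$ of the internal vertices so that each individual path contributes a small bounded amount to $p$, while the whole graph still satisfies $p(V)=0$ (choosing the path length $\equiv 2 \pmod 4$ and the number of paths odd is exactly what makes the global sum vanish). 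The key structural fact is that $a$ and $b$ are a separation pair whose removal leaves $2s+1$ components (the path interiors), so in any partition $(V_1,V_2)$ into two connected subgraphs, at most one of the $2s+1$ paths can have its interior split between $V_1$ and $V_2$; every other path must lie entirely in $V_1$ or entirely in $V_2$ (together with the terminals $a,b$, one of which lies in each part).

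The heart of the argument is then a counting/parity step. Since each path (with its appropriate endpoints) carries a fixed small weight, and at most one path may be ``cut,'' the constraint $p(V_1)=0$ forces the paths assigned to $V_1$ to have weights summing to a value that only the one cut path can correct. I would show that the only way to achieve $p(V_1)=p(V_2)=0$ is to assign all-but-one (or all) of the full paths to a single side, with the lone remaining path split to fix the residual imbalance; concretely, one side ends up containing essentially just a prefix of a single path, hence $O(t)$ vertices, while the other side contains $\Omega(st)$ vertices. Choosing $s$ large relative to a target $c_s$ then makes $\max\{|V_1|/|V_2|,|V_2|/|V_1|\}$ exceed $c_s$; tuning $s$ and $t$ so that $|V| = \Theta(st)$ gives the claimed bound $c_s \geq |V|/2 - 1$ is impossible, i.e.\ the ratio is forced to be at least roughly $|V|/2-1$.

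The main obstacle I anticipate is getting the parity bookkeeping exactly right so that (i) $p(V)=0$ holds globally, (ii) no ``balanced'' assignment of whole paths to the two sides is possible (this is where the alternating $\pm1$ pattern on a path of length $\equiv 2\pmod 4$ and the odd count $2s+1$ of paths must interlock precisely), and (iii) the single cut path cannot by itself produce a size-balanced partition. Once the weights are pinned down so that each full path contributes, say, weight $+2$ (or whatever the alternation yields) and there is an odd number of them, the feasibility of $p(V_1)=0$ pins down the multiset of paths on each side up to the one cut path, and the size lower bound follows immediately by counting internal vertices. I would finish by plugging in parameters: for any desired $c_s < |V|/2 - 1$, pick $s,t$ with the path count and length as above so that the forced ratio exceeds $c_s$, which contradicts the existence of a DBCP solution with $c_p=0$ and that value of $c_s$, completing the proof.
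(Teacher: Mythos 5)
There is a genuine gap. The paper's proof is a one-liner: take the graph of Figure~\ref{fig:example_bound} with $t=0$, i.e.\ $2s+1$ internally disjoint $a$--$b$ paths whose interiors have constant size. Your version keeps $t$ general, and this breaks the argument in two places. First, your structural claim that ``at most one of the $2s+1$ paths can have its interior split between $V_1$ and $V_2$'' is only true when $a$ and $b$ land in the \emph{same} part (then the other part is a connected subgraph of $G[V\setminus\{a,b\}]$, hence sits inside a single path interior). When $a\in V_1$ and $b\in V_2$ --- a case you never rule out --- \emph{every} path interior is split into a prefix attached to $a$ and a suffix attached to $b$, and for $t\geq 1$ one can choose the $2s+1$ split points almost freely; whether this yields a partition with $p(V_1)=p(V_2)=0$ and balanced sizes depends entirely on the exact weight assignment, which you leave unspecified (``put $p(a)=p(b)=+1$ if convenient''). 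The parity bookkeeping you defer to the end is precisely where the counterexample lives or dies: one needs the prefix sums along each path to be, say, always nonnegative with the contribution of $a$, so that $p(V_1)=0$ is unattainable in the separated case.

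Second, even granting the same-side case, the quantitative conclusion does not come out for general $t$. There the small side is a zero-weight contiguous piece of one path interior, so $|V_2|\leq 4t+2$ and the forced ratio is only about $|V|/(4t+2)=\Theta(|V|/t)$, which is far below the stated threshold $|V|/2-1$ once $t$ grows. Your closing remark about ``tuning $s$ and $t$ so that $|V|=\Theta(st)$'' cannot work: a ratio of $\Theta(s)$ never reaches $\Theta(st)$ unless $t=O(1)$. The only tuning that achieves the bound $|V|/2-1$ is $t=0$ (path interiors of two nodes, one $+1$ and one $-1$), at which point the whole argument collapses to the paper's observation: if $a,b$ are together, the other side is forced to be a single $\{+1,-1\}$ pair of size $2$, giving ratio exactly $|V|/2-1$; if $a,b$ are separated, a one-line prefix-sum check shows $p(V_1)\neq 0$. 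You have the right family of examples, but the proposal as written neither fixes the weights, nor handles the separated-terminals case, nor selects the parameters that actually realize the claimed bound.
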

\begin{proof}
In the graph depicted in Figure~\ref{fig:example_bound}, set $t=0$.
\end{proof}
\begin{observation}\label{obs:s=1}
If $c_s=1$, then for any $c_p<|V|/6$, there exist a 2-connected graph $G$ such that the DBCP problem does not have a solution even when $\forall i, p(i)=\pm1$.
\end{observation}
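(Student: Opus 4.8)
The plan is to instantiate the graph of Figure~\ref{fig:example_bound} with $s=1$ and $t$ large: let $G$ consist of two vertices $a,b$ joined by three internally-disjoint paths $P_1,P_2,P_3$, each with $4t+2$ internal vertices; this $G$ is $2$-connected. Orient each $P_i$ from $a$ to $b$, assign weight $-1$ to its first $2t+1$ internal vertices and $+1$ to its last $2t+1$, and set $p(a)=-1$, $p(b)=+1$. Every $P_i$ then has total weight $0$, so $p(V)=0$; also $|V|=12t+8$ and $|V|/6=2t+\tfrac43$. It suffices to prove that every partition $(V_1,V_2)$ with $G[V_1],G[V_2]$ connected and $|V_1|=|V_2|$ has $|p(V_1)|\ge 2t+2$. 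Indeed, then no such partition satisfies $|p(V_1)|\le c_p$ for $c_p<|V|/6$, so $G$ has no DBCP solution with $c_s=1$; and since $|V|=12t+8\to\infty$ as $t\to\infty$, for any prescribed constant $c_p$ we may pick $t$ with $12t+8>6c_p$ (which forces $2t+2>2t+\tfrac43>c_p$), which furnishes the required graph.

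Suppose $(V_1,V_2)$ is connected with $|V_1|=|V_2|=6t+4$, and without loss of generality $a\in V_1$. If also $b\in V_1$, then $V_2$ is a connected subset of $V\setminus\{a,b\}$, which is the disjoint union of the three path interiors, no two of which are adjacent; hence $V_2$ lies inside a single $P_i$, so $|V_2|\le 4t+2<6t+4$, a contradiction. Thus $b\in V_2$. A short connectivity argument now shows that for each $i$ the interior vertices of $P_i$ lying in $V_1$ form the initial segment of some length $j_i$ (the $j_i$ vertices nearest $a$): a maximal ``run'' of $V_1$-vertices inside $P_i$ that did not start at the $a$-end of $P_i$ would be a connected component of $G[V_1]$ not containing $a$. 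The condition $|V_1|=6t+4$ becomes $j_1+j_2+j_3=6t+3$. Writing $f(j)$ for the weight of the length-$j$ initial segment of a path under our weighting, one checks $f(j)=-\min(j,\,4t+2-j)$, so $p(V_1)=p(a)+\sum_{i=1}^3 f(j_i)=-1-\sum_{i=1}^3\min\bigl(j_i,\,4t+2-j_i\bigr)$.

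The crux is the inequality $\sum_{i=1}^3\min(j_i,\,4t+2-j_i)\ge 2t+1$ whenever $j_1+j_2+j_3=6t+3$ and $0\le j_i\le 4t+2$. I would prove it by letting $B=\{i:j_i>2t+1\}$ with $|B|=b$: since $\min(j,4t+2-j)$ equals $j$ for $j\le 2t+1$ and $4t+2-j$ otherwise, the left-hand side equals $(6t+3)-2\sum_{i\in B}j_i+b(4t+2)$; bounding $\sum_{i\in B}j_i\le\min\bigl(b(4t+2),\,6t+3\bigr)$ then gives a value of exactly $2t+1$ when $b\in\{1,2\}$ and strictly more when $b=0$ (in which case all $j_i=2t+1$), while $b=3$ is infeasible since it would force $\sum j_i\ge 3(2t+2)>6t+3$. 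Hence $p(V_1)\le-(2t+2)$, so $|p(V_1)|\ge 2t+2$, as required.

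The main obstacle is precisely this extremal estimate: one must rule out every way the adversary could split the three paths so as to both equalize the two parts and cancel the weight bias. The reason $2t+2$ is forced is a parity obstruction. A path contributes weight $0$ to $V_1$ only if it is taken entirely or not at all, i.e. $j_i\in\{0,4t+2\}$; but $j_1+j_2+j_3=6t+3$ is odd while $4t+2$ is even, so at least one $P_i$ must be cut in its interior, and since the initial-segment weight equals $-j$ on the range $[0,2t+1]$, the cheapest admissible interior cut still costs $2t+1$ in absolute weight given the remaining size budget, on top of the $-1$ contributed by $p(a)$. All remaining points — $2$-connectivity of $G$, connectivity of the two parts once each path is split, and the arithmetic $|V|/6=2t+\tfrac43<2t+2$ — are routine.
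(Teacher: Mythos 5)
Your proposal is correct and follows the paper's approach exactly: the paper's proof of this observation is simply to take the graph of Figure~\ref{fig:example_bound} with $s=1$ (three paths between two terminals, with the $\pm1$ weights arranged so each path has the $-1$'s toward one terminal), and you have supplied the full verification that the paper leaves implicit. Your extremal estimate $\sum_i\min(j_i,4t+2-j_i)\ge 2t+1$ is the right quantitative core, and the arithmetic $|p(V_1)|\ge 2t+2>|V|/6$ checks out.
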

\begin{proof}
In the graph depicted in Figure~\ref{fig:example_bound}, set $s=1$.
\end{proof}

Thus, $c_p$ has to be at least 1 to have any hope for a bounded $c_s$.
We show in this section that $c_p=1$ suffices for all 2-connected graphs.
We first treat 3-connected graphs.

\subsection{3-Connected Graphs}\label{subsec:3-connected}
Let $G=(V,E)$ be a 3-connected graph.
Assume for the most of this section that $\forall i, p(i)=\pm1$ and $p(V)=0$
(we will state the results for general $p$ at the end).
We show that $G$ has a partition that is essentially perfect with
respect to both objectives, i.e., with $c_p=0$ and $c_s=1$.
We say ``essentially", because $p(V_1)=p(V_2)=0$ and $|V_1|=|V_2|$
imply that $|V_1|=|V_2|$ are even, and hence $V$ must be a multiple of 4.
If this is the case, then indeed we can find such a perfect partition.
If $|V|\equiv 2 (\mathrm{mod}~4)$ ($|V|$ has to be even since $p(V)=0$), then
we can find an `almost perfect' partition, one in which
$|p(V_1)|=|p(V_2)|=1$ and $|V_1|=|V_2|$ (or one in which
$p(V_1)=p(V_2)=0$ and $|V_1|=|V_2|+2$).

We first treat the case that $G$ contains a triangle (i.e., cycle of length 3).
In the following Lemma, we use the embedding for $k$-connected graphs introduced in~\cite{linial1988rubber} and as described in Subsection~\ref{subsec:embedding}, to show that if $G$ is 3-connected with a triangle and all weights are $\pm 1$, then the DBCP problem has a perfect solution.

\begin{lemma}\label{lem:3-connected-tri}
If $G$ is 3-connected with a triangle, $\forall i, p(i)=\pm1$, and $|V|\equiv0(\mathrm{mod}~4)$, then there exists a solution to the DBCP problem
with $p(V_1)=p(V_2)=0$ and $|V_1|=|V_2|$.
If $|V|\equiv2 (\mathrm{mod}~4)$, then there is a solution with $p(V_1)=p(V_2)=0$ and $|V_1|=|V_2|+2$.
Moreover, this partition can be found in polynomial time.
\end{lemma}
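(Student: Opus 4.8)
The plan is to use the convex (rubber-band) embedding characterization of 3-connectivity from~\cite{linial1988rubber}. Pick a triangle $\{a,b,c\}$ of $G$ and build an embedding $f:V\to\rr^2$ in which $a,b,c$ are mapped to the corners of a (nondegenerate) triangle in the plane and every other vertex lies strictly inside the convex hull and equals a convex combination of its neighbors' images. By the Linial--Lov\'asz--Wigderson theorem, 3-connectivity guarantees that such an embedding exists and is in ``general position'' in the sense we need: no vertex coincides with another, and more importantly, for a generic choice of corner positions, no two distinct vertices of $V$ share the same value of a chosen linear functional $\ell$, and every proper ``level set'' sweep gives connected sublevel and superlevel graphs. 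Concretely, I would choose a direction (linear functional) $\ell$ generically so that $\ell(f(v_1))<\ell(f(v_2))<\cdots<\ell(f(v_n))$ orders the vertices strictly, and the key structural fact I want is: for every threshold $t$, both $G[\{v:\ell(f(v))\le t\}]$ and $G[\{v:\ell(f(v))> t\}]$ are connected. This is the standard consequence of the convex-embedding property — any ``lower'' set of vertices can reach the minimizing corner vertex by always moving to a neighbor of smaller $\ell$-value, and symmetrically for the upper set toward the maximizing corner.

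Given such a sweep line, I get a family of candidate partitions $(V_1^{(k)},V_2^{(k)})$ where $V_1^{(k)}=\{v_1,\dots,v_k\}$, each one connected on both sides, for $k=1,\dots,n-1$. Now I run a discrete intermediate-value / parity argument on the weight discrepancy. Since a single corner of the triangle can be made to lie at either extreme, I actually have the freedom to sweep from $a$ toward the opposite side, or from $b$, or from $c$; but more simply, I can reverse the direction, which swaps the roles of the two extreme vertices. Define $g(k)=p(V_1^{(k)})$. As $k$ increases by one, $g$ changes by exactly $p(v_{k+1})=\pm1$, so $g$ is a $\pm1$ walk from $g(1)=p(v_1)=\pm1$ to $g(n-1)=-p(v_n)=\mp1$ (using $p(V)=0$). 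Hence $g$ takes the value $0$ at some $k$; at that $k$ we have $p(V_1^{(k)})=p(V_2^{(k)})=0$. The remaining issue is to simultaneously control the size $k$.

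The refinement is to couple the weight-balancing sweep with size control by exploiting the fact that we may place \emph{any} vertex of the triangle at the far end of the sweep, and that near the middle of the sweep the set $V_1^{(k)}$ has size $k\approx n/2$. I would argue as follows: among all directions obtained by rotating $\ell$ or by reselecting which corner is extreme, consider the zero-crossing $k$ of $g$; I claim one can always arrange the zero-crossing to occur at $k=n/2$ when $n\equiv 0\pmod 4$. The cleanest way: restrict attention to sweeps where the set of the first $n/2$ vertices is forced to be connected (which it is, for every sweep), so $(V_1,V_2)$ with $|V_1|=|V_2|=n/2$ is automatically a connected partition for \emph{every} generic direction; then show that as the direction rotates continuously by $\pi$ (which reverses the order), the quantity $p(V_1)$ for the balanced-size split changes by steps of size $2$ (each time two adjacent vertices in the order swap across the midpoint, $p(V_1)$ changes by $p(\text{one})-p(\text{other})\in\{-2,0,2\}$) and ends at $-p(V_1)$; being even throughout (since $n/2$ is even when $4\mid n$) and changing by $0,\pm2$, it must hit $0$. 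That gives $p(V_1)=p(V_2)=0$ with $|V_1|=|V_2|=n/2$. For $n\equiv 2\pmod 4$, the same rotation argument applied to the split of sizes $(n/2+1, n/2-1)$ — where the discrepancy $p(V_1)$ is now even and the two ends are negatives of each other — yields $p(V_1)=p(V_2)=0$ with $|V_1|=|V_2|+2$.

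The main obstacle I anticipate is making the ``connectivity of every sublevel/superlevel set under a generic linear sweep of a convex embedding'' claim fully rigorous, including the genericity needed to avoid ties and degenerate positions, and — more delicately — verifying that the midpoint split stays connected on \emph{both} sides for every direction in the rotation, not just for one. I would handle this by invoking the precise statement in Subsection~\ref{subsec:embedding} that in a convex $\rr^2$-embedding of a 3-connected graph with the triangle on the outer face, every open halfplane's vertex set induces a connected subgraph (the half containing at least one outer corner can retract to that corner along decreasing-$\ell$ neighbors; the other half symmetrically), and then note that a midpoint split is a halfplane split for a suitable translate of $\ell$, so connectivity on both sides is automatic. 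The polynomial-time claim is then immediate: computing the rubber-band embedding reduces to solving a linear system, choosing a generic direction is easy, sorting gives the order, and scanning the $O(n)$ rotation events to find the zero of $p(V_1)$ is linear.
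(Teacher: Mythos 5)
Your proposal is correct and follows essentially the same route as the paper: a convex $X$-embedding with the triangle as $X$, a generic linear sweep whose prefix/suffix sets are connected via the $st$-numbering property of the projection order, and a rotation-by-$\pi$ intermediate-value argument in which $p(V_1)$ for the half-size split changes by $0,\pm2$ at each adjacent transposition and is even throughout, forcing a zero. The only minor imprecision is that for $|V|\equiv 2\ (\mathrm{mod}\ 4)$ the asymmetric split's value after reversal is not exactly the negative of the initial value (it differs by the weight of the two middle vertices), but it still has the opposite sign or is already zero, so the argument goes through.
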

\begin{proof}
Assume that $|V|\equiv0(\mathrm{mod}~4)$; the proof for the case $|V|\equiv2(\mathrm{mod}~4)$ is similar.
In~\cite{linial1988rubber} as described in Subsection~\ref{subsec:embedding}, it is proved that if $G$ is a $k$-connected graph, then for every $X\subset V$ with $|X|=k$, $G$ has a convex $X$-embedding in general position. Moreover, this embedding can be found by solving a set of linear equations of size $|V|$.
Now, assume $v,u,w\in V$ form a triangle in $G$. Set $X=\{v,u,w\}$. Using the theorem, $G$ has a convex $X$-embedding $f:V\rightarrow\mathbb{R}^2$ in general position. Consider a circle $\mathcal{C}$ around the triangle $f(u),f(v),f(w)$ in $\mathbb{R}^2$ as shown in an example in Fig.~\ref{fig:embedding_1}. Also consider a directed line $\mathcal{L}$ tangent to the circle $C$ at point $A$. If we project the nodes of $G$ onto the line $\mathcal{L}$, since the embedding is convex and also $\{u,v\},\{u,w\},\{w,v\}\in E$, the order of the nodes' projection gives an $st$-numbering between the first and the last node (notice that the first and last nodes are always from the set $X$). For instance in Fig.~\ref{fig:embedding_1}, the order of projections give an $st$-numbering between the nodes $u$ and $v$ in $G$. Hence, if we set $V_1$ to be the $|V|/2$ nodes whose projections come first and $V_2$ are the $|V|/2$ nodes whose projections come last, then $G[V_1]$ and $G[V_2]$ are both connected and $|V_1|=|V_2|=|V|/2$. The only thing that may not match is $p(V_1)$ and $p(V_2)$. Notice that for each directed line tangent to the circle $\mathcal{C}$, we can similarly get a partition such that $|V_1|=|V_2|=|V|/2$. So all we need is a point $D$ on the circle $\mathcal{C}$ such that if we partition based on the directed line tangent to $C$ at point $D$, then $p(V_1)=p(V_2)=0$. To find such a point, we move $\mathcal{L}$ from being tangent at point $A$ to point $B$ ($AB$ is a diameter of the circle $\mathcal{C}$) and consider the resulting partition. Notice that if at point $A$, $p(V_1)>0$, then at point $B$ since $V_1$ and $V_2$ completely switch places compared to the partition at point $A$,  $p(V_1)<0$. Hence, as we move $\mathcal{L}$ from being tangent at point $A$ to point $B$ and keep it tangent to the circle, in the resulting partitions, $p(V_1)$ goes from some positive value to a non-positive value. Notice that the partition $(V_1,V_2)$ changes only if $\mathcal{L}$ passes a point $D$ on the circle such that at $D$, $\mathcal{L}$ is perpendicular to a line that connects $f(i)$ to $f(j)$ for some $i,j\in V$. Now, since the embedding is in general position, there are exactly two points on every line that connects two points $f(i)$ and $f(j)$, so $V_1$ changes at most by one node leaving $V_1$ and one node entering  $V_1$ at each step as we move $\mathcal{L}$. Hence, $p(V_1)$ changes by either $\pm 2$ or $0$ value at each change. Now, since $|V|\equiv 0 (\mathrm{mod}~ 4)$, $p(V_1)$ has an even value in all the resulting partitions. Therefore, as we move $\mathcal{L}$ from being tangent at point $A$ to point $B$, there must be a point $D$ such that in the resulted partition $p(V_1)=p(V_2)=0$.

It is also easy to see that  since $V_1$ may change only when a line that passes through 2 nodes of graph $G$ is perpendicular to $\mathcal{L}$, we can find $D$ in at most $O(|V|^2)$ steps.
\end{proof}
\begin{figure}[t]
\centering
\includegraphics[scale=0.75]{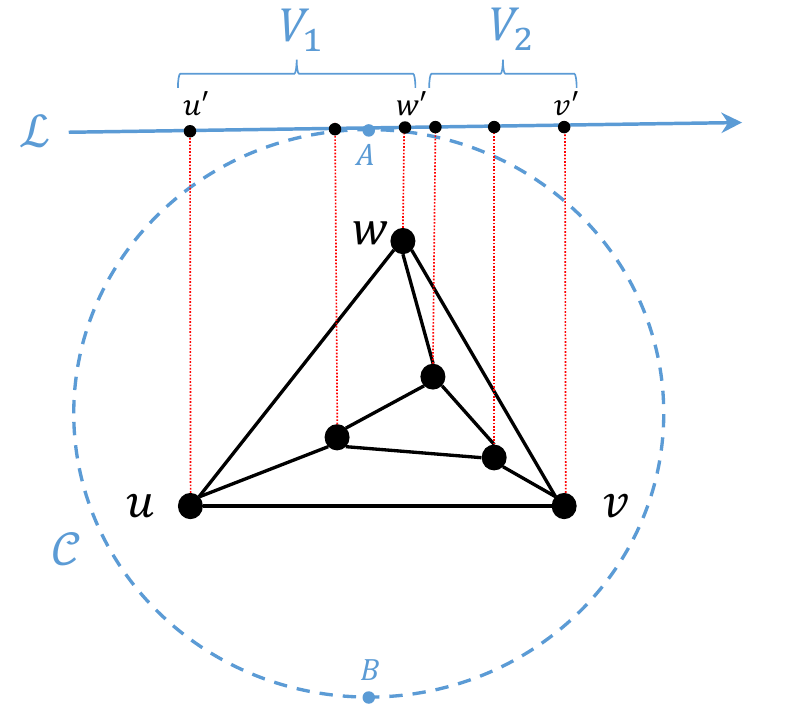}
\caption{Proof of Lemma~\ref{lem:3-connected-tri}.}
\label{fig:embedding_1}
\end{figure}
When $G$ is a triangle-free 3-connected graph, however, the proof in Lemma~\ref{lem:3-connected-tri} cannot be directly used anymore. The reason is if for example $\{u,v\}\notin E$ and we project the nodes of $G$ onto the line $\mathcal{L}$, this time the order of the nodes projection does not give an $st$-numbering between the first and the last node if  for example $u$ and $w$ are the first and last node, since some of the middle nodes may only be connected to $v$. To prove a similar result for triangle-free 3-connected case, we first provide the following two Lemmas. The main purpose of the following two Lemmas are to compensate for the triangle-freeness of $G$ in the proof of Lemma~\ref{lem:3-connected-tri}. The idea is to show that in every 3-connected graph, there is a triple $\{u,w,v\}\in V$, such that $\{u,w\},\{w,v\}\in E$ and in every partition that we get by the approach used in the proof of Lemma~\ref{lem:3-connected-tri}, if $u$ and $v$ are in $V_i$, so is a path between $u$ and $v$.

\begin{lemma}\label{lem:cut}
If $G$ is 3-connected, then there exists a set $\{u,v,w\}\in V$ and a partition of $V$ into $(V_1',V_2')$ such that:
\begin{enumerate}
\item $V_1'\cap V_2' =\emptyset$ and $V_1'\cup V_2'=V$,
\item $G[V_1']$ and  $G[V_2']$ are connected,
\item $\{u,w\},\{v,w\}\in E$,
\item $w\in V_1'$, $u,v\in V_2'$,
\item $|V_2'|\leq |V|/2$.
\end{enumerate}
Moreover, such a partition and $\{u,v,w\}$ can be found in $O(|E|)$ time.
\end{lemma}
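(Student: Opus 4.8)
The plan is to build the partition from an $st$-numbering of $G$ with one vertex deleted. Pick any vertex $w$; since $G$ is $3$-connected, $d:=\deg(w)\ge 3$ and $H:=G-w$ is $2$-connected on $|V|-1\ge 3$ vertices. Fix two neighbors $a,b$ of $w$ and take an $st$-numbering $z_1=a,\dots,z_m=b$ of $H$ (so $m=|V|-1$), meaning every $z_i$ with $1<i<m$ has a neighbor in $H$ of smaller index and one of larger index. I will use the standard fact that every prefix $\{z_1,\dots,z_j\}$ and every suffix $\{z_j,\dots,z_m\}$ then induces a connected subgraph of $H$ (in a prefix every internal vertex points to a lower-indexed neighbor, so the prefix funnels down to $z_1$; symmetrically suffixes funnel up to $z_m$). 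Hence, for any cut index $j$, both $\{z_1,\dots,z_j\}$ and $\{z_{j+1},\dots,z_m\}$ are connected, and since $w\sim a=z_1$ and $w\sim b=z_m$, attaching $w$ to whichever of these two blocks we like leaves it connected. This already gives partitions into two connected pieces; the work is in placing the cut correctly.

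Now choose the cut so that the block not containing $w$ has at most $|V|/2$ vertices and contains two neighbors of $w$. Let $1=j_1<j_2<\dots<j_d=m$ be the positions of the $d\ge 3$ neighbors of $w$ in the numbering; then $j_2$ and $j_{d-1}$ are internal and $j_2\le j_{d-1}$. If $j_2\le |V|/2$, set $V_2':=\{z_1,\dots,z_{j_2}\}$ and $V_1':=\{z_{j_2+1},\dots,z_m\}\cup\{w\}$ and take $u:=z_1$, $v:=z_{j_2}$: then $\{u,w\},\{v,w\}\in E$, $w\in V_1'$, $u,v\in V_2'$, $|V_2'|=j_2\le |V|/2$, and both induced subgraphs are connected by the previous paragraph. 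Otherwise $j_{d-1}\ge j_2 > |V|/2$; set $V_2':=\{z_{j_{d-1}},\dots,z_m\}$ (which has $|V|-j_{d-1} < |V|/2$ vertices) and $V_1':=\{z_1,\dots,z_{j_{d-1}-1}\}\cup\{w\}$ and take $u:=z_m$, $v:=z_{j_{d-1}}$, and check the same five properties. This is precisely the point where $\deg(w)\ge 3$ is needed: $w$ has at least one internal neighbor, so $j_2$ and $j_{d-1}$ are well defined, and one of ``just after $z_{j_2}$'' and ``just before $z_{j_{d-1}}$'' always lands on the correct side of $|V|/2$.

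The step I expect to require the most care is the very first one, since $a$ and $b$ are two neighbors of $w$ and need not be adjacent --- in fact in the triangle-free case, which is exactly the case this lemma is meant to handle in the proof replacing Lemma~\ref{lem:3-connected-tri}, no two neighbors of $w$ are adjacent --- so the classical $st$-numbering theorem does not apply to the pair $\{a,b\}$ directly. I would get around this by adding the edge $\{a,b\}$ to $H$: the result is still $2$-connected, it has an $st$-numbering from $a$ to $b$ by the classical theorem, and because the new edge is incident only to the endpoints $z_1$ and $z_m$, every internal vertex keeps all its $H$-neighbors, so the numbering (and the connectivity of all prefixes and suffixes) is valid for $H$ itself. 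Aside from this, the only things to verify are the boundary inequalities ($j_2\ge 2$, $j_{d-1}\le m-1$, and both blocks nonempty), all immediate from $d\ge 3$ and $|V|\ge 4$, together with $u,v,w$ being distinct. The construction is linear time: choosing $w$ is $O(1)$, forming $G-w$ and computing an $st$-numbering is $O(|E|)$ by the classical linear-time algorithm, and reading off $j_2$, $j_{d-1}$ and outputting the partition is $O(|V|)$, for a total of $O(|E|)$.
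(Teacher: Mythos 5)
Your proof is correct, and it takes a genuinely different route from the paper's. The paper invokes the Cheriyan--Maheshwari algorithm to find a nonseparating induced cycle $C_0$ in which every vertex has a neighbor outside the cycle, and then splits into cases on whether $|C_0|\leq |V|/2+1$ (take three consecutive cycle vertices $(u,w,v)$ and let $V_2'=C_0\setminus\{w\}$) or not (find a vertex $w$ outside $C_0$ with two neighbors $u,v$ on it and let $V_2'$ be the shorter $u$--$v$ arc plus its endpoints); connectivity of $V_1'$ comes from the nonseparating property. You instead delete an arbitrary vertex $w$, $st$-number the $2$-connected remainder between two neighbors of $w$ (adding the edge $\{a,b\}$ if needed, which is harmless since the artificial edge only touches the endpoints), and cut at the second or the second-to-last neighbor of $w$, using $\deg(w)\geq 3$ to guarantee an internal neighbor and the prefix/suffix connectivity of $st$-numberings. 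I checked the boundary cases: $j_2\geq 2$ and $j_{d-1}\leq m-1$ make $u,v,w$ distinct, $w$ always has a neighbor ($z_m$ or $z_1$) on the $V_1'$ side, and in your second case $|V_2'|=|V|-j_{d-1}<|V|/2$ as claimed; all five properties and the $O(|E|)$ bound hold. Your argument is more elementary in that it needs only the Lempel--Even--Cederbaum machinery rather than nonseparating ear decompositions, though the paper gets the latter essentially for free since it is used elsewhere (Proposition~\ref{lem:3-connected-zero-p}, and the weighted adaptation of this lemma inside the proof of Theorem~\ref{th:2-connected}, which your construction would also need to be adapted to handle).
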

\begin{proof}
Using the algorithm presented in~\cite{cheriyan1988finding}, we can find a non-separating cycle $C_0$ in $G$ such that every node in $C_0$ has a neighbor in $G\backslash C_0$. Now, we consider two cases:
\begin{itemize}
\item[(i)] If $|C_0|\leq |V|/2+1$, then select any three consecutive nodes $(u,w,v)$ of $C_0$ and set $V_2'=C_0\backslash \{w\}$ and $V_1'=V\backslash V_2'$.
\item[(ii)] If $|C_0|> |V|/2+1$, since every node in $C_0$ has a neighbor in $G\backslash C_0$, there exists a node $w\in V\backslash C_0$ such that $|N(w)\cap C_0|\geq 2$. Select two nodes $u,v\in N(w)\cap C_0$. There exists a path $P$ in $C_0$ between $u$ and $v$ such that $|P|<|V|/2-1$. Set $V_2'=\{u,v\}\cup P$ and $V_1'=V\backslash V_2'$.\qedhere
\end{itemize}
\end{proof}
\begin{figure}[t]
\centering
\includegraphics[scale=0.9]{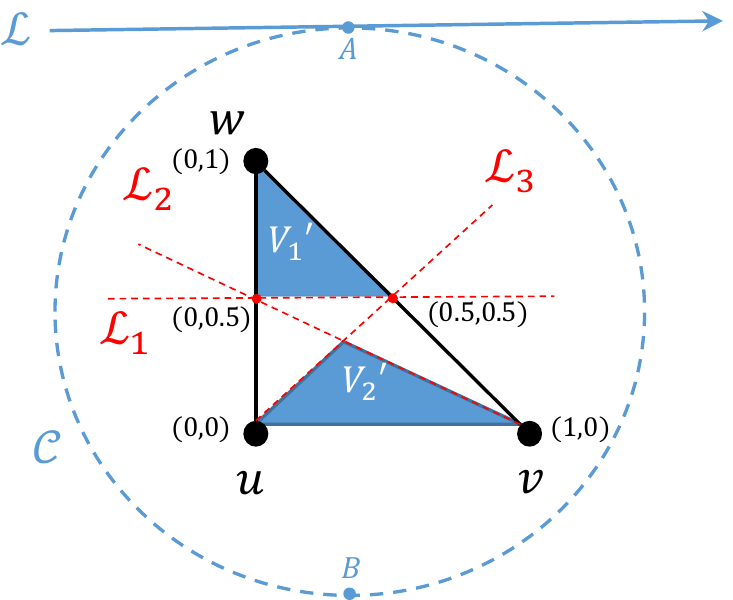}
\caption{Proof of Lemma~\ref{lem:embedding} and Theorem~\ref{th:3-connected}.}
\label{fig:3-connected}
\end{figure}
\begin{lemma}\label{lem:embedding}
Given a partition $(V_1',V_2')$ of a 3-connected graph $G$ with properties described in Lemma~\ref{lem:cut}, $G$ has a convex $X$-embedding in general position with mapping $f:V\rightarrow \mathbb{R}^2$ such that:
\begin{enumerate}
\item $X=\{u,w,v\}$, $f(u)=(0,0)$, $f(v)=(1,0)$, and $f(w)=(0,1)$,
\item Every node $i$ in $V_1'$ is mapped to a point $(f_1(i),f_2(i))$ with $f_2(i)\geq 1/2$,
\item Every node $i$ in $V_2'$ is mapped to a point $(f_1(i),f_2(i))$ with $f_1(i)\geq f_2(i)$ and $f_1(i)+2f_2(i)\leq 1$.
\end{enumerate}
Moreover, such an embedding can be found in in polynomial time.
\end{lemma}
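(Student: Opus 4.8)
The plan is to produce $f$ as a \emph{rubber-band} (energy-minimizing) embedding of $G$ in the sense of Linial--Lov\'asz--Wigderson~\cite{linial1988rubber} (Subsection~\ref{subsec:embedding}), nailing the three nodes of $X$ at the prescribed corners and choosing edge weights that pull $V_1'$ toward $w$ and $V_2'$ toward the segment $\overline{uv}$. Concretely: nail $f(u)=(0,0)$, $f(v)=(1,0)$, $f(w)=(0,1)$; put weight $1$ on every edge with both endpoints in $V_1'$ or both endpoints in $V_2'$, and weight $t>0$ on every edge between $V_1'$ and $V_2'$; and let $M_t$ be the resulting equilibrium embedding, in which every non-nailed node sits at the weighted centroid of its neighbors (the solution of a linear system of size $O(|V|)$). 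Since $G$ is $3$-connected, by~\cite{linial1988rubber} $M_t$ is a convex $X$-embedding, and for a generic choice of weights it is moreover in general position; so the only thing to verify is that items~2 and~3 hold once $t$ is small enough.

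I would analyze the limit $t\to 0^+$. In that limit the $V_1'$ and $V_2'$ sides decouple, and $M_t$ converges to the unique minimizer $M^{*}$ of the sum of $\|f(i)-f(j)\|^2$ over all edges $\{i,j\}$ with $i,j\in V_1'$ plus the same sum over edges with $i,j\in V_2'$, subject to the three nails; uniqueness and convergence follow from strict convexity, using that $G[V_1']$ and $G[V_2']$ are connected. The $V_1'$-block of $M^{*}$ puts \emph{every} node of $V_1'$ at $f(w)=(0,1)$; the $V_2'$-block puts every node $i\in V_2'$ at $(\xi(i),0)$, where $\xi$ is the harmonic extension over $G[V_2']$ of the boundary values $\xi(u)=0$, $\xi(v)=1$.

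The crux is that $M^{*}$ lies strictly inside the target regions. Each node of $V_1'$ is at $(0,1)$, so $f_2=1>\tfrac12$, comfortably inside the region of item~2. For item~3 I need $0<\xi(i)<1$ for every $i\in V_2'\setminus\{u,v\}$: by the construction in the proof of Lemma~\ref{lem:cut}, $G[V_2']$ contains a Hamiltonian $u$--$v$ path, so neither $u$ nor $v$ is a cut vertex of $G[V_2']$; hence $G[V_2']-u$ and $G[V_2']-v$ are connected, and a standard maximum-principle argument then shows that $\xi$ attains the value $0$ only at $u$ and the value $1$ only at $v$. Thus for $i\in V_2'\setminus\{u,v\}$ the point $(\xi(i),0)$ satisfies $f_1=\xi(i)>0=f_2$ and $f_1+2f_2=\xi(i)<1$ strictly, while $u=(0,0)$ and $v=(1,0)$ satisfy item~3 with equalities (which is permitted). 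Since items~2 and~3 describe a region with nonempty interior around $M^{*}$ and the nailed nodes never move, continuity gives a threshold $t^{*}>0$ such that $M_t$ satisfies items~1--3 for all $0<t<t^{*}$; picking $t\in(0,t^{*})$ together with a generic infinitesimal perturbation of all weights makes $M_t$ simultaneously a convex $X$-embedding in general position. Finally, $t^{*}$ can be bounded below by $1/\mathrm{poly}(|V|)$, so the whole construction runs in polynomial time.

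The step I expect to be the main obstacle is the combination of (a) justifying $M_t\to M^{*}$ and that $M^{*}$ is \emph{strictly} interior to the regions, and (b) doing so without losing general position. Part (a) is exactly where the precise output of Lemma~\ref{lem:cut} (the Hamiltonian $u$--$v$ path in $G[V_2']$, hence the non-cut-vertex property of $u$ and $v$) is needed, rather than just its stated conclusion; part (b) is handled by noting that the weight vectors that fail general position form a measure-zero set, so one can stay within the ``heavy $\gg$ light'' regime and still avoid them.
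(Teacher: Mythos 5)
Your construction is essentially the paper's: both take the Linial--Lov\'asz--Wigderson rubber-band embedding with the three nails at the prescribed corners and an asymmetric weighting making intra-part edges much heavier than cross edges (your family $t\to 0^+$ on cross edges is, up to global rescaling, the paper's family $g\to\infty$ on intra edges). The difference is only in how the key estimates are obtained: the paper argues quantitatively (an energy bound forcing every heavy edge to have small stretch, which pushes $V_1'$ above $y=1/2$, plus chained lower bounds on $f_1(i)$ and $1-f_1(i)$ along paths inside $V_2'$), while you pass to the decoupled limit embedding and invoke strict interiority plus continuity; both arguments, as you correctly observe, really use the Hamiltonian $u$--$v$ path in $G[V_2']$ supplied by the proof of Lemma~\ref{lem:cut} (the paper's write-up cites only connectivity of $G[V_2']$, but its path-chaining likewise needs paths that avoid the nailed endpoints). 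One quantitative assertion of yours is unjustified and most likely false: $t^*$ need not be $1/\mathrm{poly}(|V|)$, since the harmonic values $\xi(i)$ can be exponentially close to $0$ or $1$, and indeed the paper's own threshold is $g\geq 4|V|^{2|V|+2}|E|$; polynomial running time survives because a suitable weight with polynomially many \emph{bits} suffices for the linear-system solve, which is how the paper phrases it and how you should too.
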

\begin{proof}[Sketch of the proof]
Set $X=\{v,u,w\}$. Using~\cite{linial1988rubber}, $G$ has a convex $X$-embedding in $\mathbb{R}^2$ in general position with mapping $f:V\rightarrow \mathbb{R}^2$ such that $f(u)=(0,0)$, $f(v)=(1,0)$, and $f(w)=(0,1)$.  In the $X$-embedding of the nodes, we have a freedom to set the elasticity coefficient vector $\vec{c}$ to anything that we want (except a measure zero set of vectors). So for any edge $\{i,j\}\in G[V_1]\cup G[V_2]$, set $c_{ij}=g$; and for any $\{i,j\} \in E[V_1',V_2']$, set $c_{ij}=1$. Since both $G[V_1']$ and $G[V_2']$ are connected, as we increase $g$, nodes in $V_1'$ get closer to $w$ and nodes in $V_2'$ get closer to the line $uv$ (as $g\to \infty$, nodes in $V_1'$ get in the same position as $w$ and node in $V_2'$ get on the line $uv$). Hence, intuitively there exists a value $g$, for which all the nodes in $V_1$ are above line $\mathcal{L}_1$ and all the nodes in $V_2'$ are below the lines $\mathcal{L}_2$ and $\mathcal{L}_3$ as depicted in Fig.~\ref{fig:3-connected}. In the appendix we give the detailed proof and show that a $g$ with polynomially many bits suffices.
\end{proof}

Using Lemmas~\ref{lem:cut} and \ref{lem:embedding},  we are now able to prove that for any 3-connected graph $G$ such that all the weights are $\pm 1$, the DBCP problem has a solution for $c_p=0$ and $c_s=1$. The idea of the proof is similar to the proof of Lemma~\ref{lem:3-connected-tri}, however, we need to use Lemma~\ref{lem:cut} to find a desirable partition $(V_1',V_2')$ and then use this partition to find an embedding with properties as described in Lemma~\ref{lem:embedding}. By using this embedding, we
can show that in every partition that we obtain by the approach in the proof of Lemma~\ref{lem:3-connected-tri}, if $u$ and $v$ are in $V_i$, so is a path between $u$ and $v$. This implies then that $G[V_1]$ and $G[V_2]$ are connected. So we can use similar arguments as in the proof of Lemma~\ref{lem:3-connected-tri} to prove the following theorem (see the Appendix~\ref{sec:proof2} for the proof details).
\begin{theorem}\label{th:3-connected}
If $G$ is 3-connected,  $\forall i, p(i)=\pm1$, and $|V|\equiv0(\mathrm{mod}~4)$,
then there exists a solution to the DBCP problem
with $p(V_1)=p(V_2)=0$ and $|V_1|=|V_2|$.
If $|V|\equiv2(\mathrm{mod}~4)$, then there is a solution with $p(V_1)=p(V_2)=0$ and $|V_1|=|V_2|+2$.
Moreover, this partition can be found in polynomial time.
\end{theorem}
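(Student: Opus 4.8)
\textbf{Proof proposal for Theorem~\ref{th:3-connected}.}
The plan is to reduce the triangle-free case to the geometric argument of Lemma~\ref{lem:3-connected-tri} by replacing the triangle with the triple $\{u,w,v\}$ and partition $(V_1',V_2')$ supplied by Lemma~\ref{lem:cut}, embedded according to Lemma~\ref{lem:embedding}. Concretely, first I would invoke Lemma~\ref{lem:cut} to get $\{u,v,w\}$ with $\{u,w\},\{v,w\}\in E$, $w\in V_1'$, $u,v\in V_2'$, $|V_2'|\le |V|/2$, and then invoke Lemma~\ref{lem:embedding} to obtain a convex $X$-embedding $f$ in general position with $X=\{u,w,v\}$, $f(u)=(0,0)$, $f(v)=(1,0)$, $f(w)=(0,1)$, where every node of $V_1'$ lies in the half-plane $f_2\ge 1/2$ and every node of $V_2'$ lies in the region $f_1\ge f_2$, $f_1+2f_2\le 1$ (the thin wedge near the segment $uv$, below lines $\mathcal{L}_1,\mathcal{L}_2,\mathcal{L}_3$ of Fig.~\ref{fig:3-connected}).

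Next I would run the sweep of Lemma~\ref{lem:3-connected-tri}: take a circle $\mathcal{C}$ enclosing the triangle $f(u),f(v),f(w)$, and for a directed tangent line $\mathcal{L}$ let $V_1$ be the first $|V|/2$ nodes in the projection onto $\mathcal{L}$ and $V_2$ the last $|V|/2$. The crucial new point to verify is \emph{connectivity} of $G[V_1]$ and $G[V_2]$ for every tangent direction. For a tangent line, the first and last nodes of the projection still lie in $X$ (since $f$ is a convex embedding with the three outer nodes of $X$ on the convex hull), so between the two extreme nodes of $X$ the projection order is an $st$-numbering of $G$, which makes each prefix and each suffix connected --- \emph{provided} the two extreme nodes are adjacent or the ``missing edge'' is harmless. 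The only problematic direction is when the two extreme nodes are $u$ and $v$ (the non-adjacent pair): then the prefix containing $u$ but not $v$ could in principle be disconnected, because some interior node might attach only through $v$. Here the geometry of Lemma~\ref{lem:embedding} saves us: the wedge region for $V_2'$ forces that whenever $u$ and $v$ both fall on the same side of the sweep cut, the entire $V_2'$-portion separating them along $C_0$-type paths also falls on that side, because $w$ (the only neighborhood ``bridge'' above) sits at height $1$ while all of $V_2'$ sits at height $\le 1/2$; so the $uv$-side of the cut contains a $u$--$v$ path through $V_2'$, and the other side contains $w$ and is a suffix/prefix in a genuine $st$-numbering between $w$ and the other extreme node. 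I would make this precise by checking that for every tangent direction the halfplane cut, intersected with the two geometric regions, leaves each of $G[V_1]$, $G[V_2]$ connected; this is where I expect to lean hardest on the explicit inequalities $f_2\ge 1/2$ on $V_1'$ and $f_1\ge f_2$, $f_1+2f_2\le 1$ on $V_2'$ and on $w$ being a common neighbor of $u,v$.

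Once connectivity holds for all tangent directions, the rest is exactly the parity/continuity argument of Lemma~\ref{lem:3-connected-tri}: sweep $\mathcal{L}$ from tangency at $A$ to the antipodal point $B$; the two parts swap, so $p(V_1)$ changes sign (weakly) over the sweep; each elementary move of $\mathcal{L}$ past a direction perpendicular to some line through two embedded points swaps at most one node in and one node out (general position gives exactly two perpendicular tangent directions per such line), so $p(V_1)$ changes by $0$ or $\pm2$; since $|V|\equiv 0\ (\mathrm{mod}\ 4)$ forces $p(V_1)$ even throughout, some intermediate direction yields $p(V_1)=p(V_2)=0$ while $|V_1|=|V_2|=|V|/2$. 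For $|V|\equiv 2\ (\mathrm{mod}\ 4)$ I would instead take $|V_1|=|V|/2+1$, $|V_2|=|V|/2-1$ so that $p(V_1)$ is even along the sweep and again hits $0$, giving $p(V_1)=p(V_2)=0$, $|V_1|=|V_2|+2$. Polynomial time follows because Lemmas~\ref{lem:cut} and~\ref{lem:embedding} are polynomial (the embedding needs only polynomially many bits), and the sweep has at most $O(|V|^2)$ breakpoints. The main obstacle, as indicated, is the connectivity verification in the bad direction where $u$ and $v$ are the extreme nodes; everything else is a transcription of the triangle case.
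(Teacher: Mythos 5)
Your proposal follows the paper's proof essentially step for step: Lemma~\ref{lem:cut} supplies $\{u,v,w\}$ and $(V_1',V_2')$, Lemma~\ref{lem:embedding} supplies the wedge embedding, and the tangent-line sweep with the parity argument finishes exactly as in Lemma~\ref{lem:3-connected-tri}, with connectivity in the ``$u,v$ extreme'' direction being the one new verification. The only point to tighten is your justification of that step: a halfplane containing $u$ and $v$ need not contain the whole wedge, so the paper instead argues that since $|V_2|=|V|/2\geq|V_2'|$ either $V_2=V_2'$ or $V_2$ contains some $z\in V_1'$ with $f_2(z)\geq 1/2$, and then the halfplane contains $\mathrm{conv}\{u,v,z\}\supseteq$ the wedge (this is precisely what the lines $\mathcal{L}_2,\mathcal{L}_3$ are engineered for), forcing $V_2'\subseteq V_2$ and hence a $u$--$v$ path inside $V_2$.
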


It is easy to check for a 3-connected graph $G$, by using the same approach as in the proof of Lemma~\ref{lem:3-connected-tri} and Theorem~\ref{th:3-connected}, that even when the weights are arbitrary (not necessarily $\pm 1$) and also $p(V)\neq 0$, we can still find a connected partition $(V_1,V_2)$ for $G$ such that
$|p(V_1)-p(V)/2|,|p(V_1)-p(V)/2| \leq \max_{i\in V}|p(i)|$ and $|V_1|=|V_2|$.
\begin{corollary}\label{cor:3-general}
If $G$ is 3-connected, then the DBCP problem (with arbitrary $p$, and not necessarily satisfying $p(V)= 0$) has a
solution for $c_p=\max_{i\in V}|p(i)|$ and $c_s=1$. Moreover, this solution can be found in polynomial time.
\end{corollary}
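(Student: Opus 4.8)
The plan is to reuse, essentially verbatim, the geometric construction from the proofs of Lemma~\ref{lem:3-connected-tri} and Theorem~\ref{th:3-connected}, and to change only the final step in which weight balance is achieved, since the hypotheses $p(i)=\pm1$ and $p(V)=0$ were used there and nowhere else. Regardless of whether $G$ has a triangle, those arguments produce a convex $X$-embedding $f:V\to\mathbb{R}^2$ in general position (with the extra geometric properties of Lemma~\ref{lem:embedding} when $G$ is triangle-free) together with a sweeping tangent line whose rotation between two antipodal positions $A$ and $B$ yields a finite ordered family of partitions $(V_1^{(0)},V_2^{(0)}),\dots,(V_1^{(m)},V_2^{(m)})$ with the following properties, all independent of $p$: (i) $G[V_1^{(k)}]$ and $G[V_2^{(k)}]$ are connected for every $k$; (ii) $|V_1^{(k)}|=\lceil|V|/2\rceil$ and $|V_2^{(k)}|=\lfloor|V|/2\rfloor$, so the two parts have equal size when $|V|$ is even and differ by one otherwise (already giving $c_s=1$ in the even case); (iii) the partition at $k=m$ is the one at $k=0$ with the two parts interchanged; and (iv) consecutive partitions differ by exactly one node entering $V_1$ and one node leaving $V_1$ (at a given critical angle the transpositions of the projection order involve pairwise disjoint pairs of nodes by general position, and at most one such pair can straddle the boundary position $\lceil|V|/2\rceil$).

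Write $M=\max_{i\in V}|p(i)|=c_p$ and $g_k=p(V_1^{(k)})-p(V)/2$. By (iii), $p(V_1^{(m)})=p(V_2^{(0)})=p(V)-p(V_1^{(0)})$, so $g_m=-g_0$, and after possibly swapping the roles of $V_1$ and $V_2$ throughout we may assume $g_0\ge 0\ge g_m$. By (iv), $g_k-g_{k-1}=p(a_k)-p(b_k)$ where $a_k$ enters and $b_k$ leaves $V_1$; in particular $g_{k-1}>M$ and $g_k<-M$ cannot both hold, since they would force $p(b_k)-p(a_k)>2M$, contradicting $p(b_k)\le M$ and $p(a_k)\ge -M$. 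Let $k^\ast$ be the least index with $g_{k^\ast}\le0$ (it exists since $g_m\le0$). If $g_{k^\ast}\ge -M$, then $|g_{k^\ast}|\le M$ and $(V_1^{(k^\ast)},V_2^{(k^\ast)})$ is a valid partition. Otherwise $g_{k^\ast}<-M$; then by the previous remark $g_{k^\ast-1}\le M$, and by minimality of $k^\ast$ we have $g_{k^\ast-1}>0$, so $0<g_{k^\ast-1}\le M$ and $(V_1^{(k^\ast-1)},V_2^{(k^\ast-1)})$ works. In both cases $|p(V_1)-p(V)/2|\le M=c_p$, and since $p(V_1)+p(V_2)=p(V)$ also $|p(V_2)-p(V)/2|\le c_p$; combined with $|V_1|=|V_2|$ this gives $c_s=1$.

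For polynomiality: the embedding is obtained by solving a linear system of size $|V|$ (as in Lemma~\ref{lem:3-connected-tri} via \cite{linial1988rubber}, or via Lemmas~\ref{lem:cut} and~\ref{lem:embedding} in the triangle-free case, both already polynomial), and the sweep has $O(|V|^2)$ critical angles (one per pair of nodes), so the family above has $m=O(|V|^2)$ members; we enumerate them and test the bound on $g_k$ in polynomial time.

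I do not anticipate a real obstacle: the whole partition family is already available, and the only genuinely new ingredient is the elementary discrete intermediate-value argument above. The single delicate point is keeping the additive slack equal to $M$ rather than the naive $2M$, which is exactly what the observation ``$g_{k-1}>M$ and $g_k<-M$ cannot both hold'' buys us; a minor cosmetic caveat is that for odd $|V|$ the two parts have sizes differing by one, so the claim $c_s=1$ should be read up to this unavoidable rounding.
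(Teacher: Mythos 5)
Your proposal is correct and follows exactly the route the paper intends: it reuses the rotating-tangent-line partition family from Lemma~\ref{lem:3-connected-tri} and Theorem~\ref{th:3-connected} and replaces only the final parity/sign-change step with a discrete intermediate-value argument, which is precisely what the paper's one-line justification (``by using the same approach'') leaves to the reader. The only loose end is that for odd $|V|$ your property (iii) holds only up to the middle node of the projection order (so $g_m=-g_0+p(c)$ for some node $c$ rather than $g_m=-g_0$), but since $|p(c)|\leq M$ the sign-change argument still goes through, and the size-rounding caveat you flag is equally present in the paper's own statement.
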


\subsection{2-Connected Graphs}\label{subsec:2-connected}
We first define a \emph{pseudo-path} between two nodes in a graph as below. The definition is inspired by the definition of the $st$-numbering.
\begin{definition}
A \emph{pseudo-path} between nodes $u$ and $v$ in $G=(V,E)$, is a sequence of nodes $v_1,\dots,v_t$ such that if $v_0= u$ and $v_{t+1}=v$, then
for any $1\leq i\leq t$, $v_i$ has neighbors $v_j$ and $v_k$ such that $j<i<k$.
\end{definition}
Using the pseudo-path notion, in the following lemma we show that if $G$ is 2-connected and has a separation pair such that none of the resulting components are too large, then the DBCP problem always has a solution for some $c_p=c_s=O(1)$. The idea used in the proof of this lemma is one of the building blocks of the proof for the general 2-connected graph case.

\begin{lemma}\label{lem:parallel}
Given a 2-connected graph $G$ and an integer $q>1$, if $\forall i: p(i)=\pm1$ and $G$ has a separation pair $\{u,v\}\subset V$ such that for every connected component $G_i=(V_i,E_i)$ of $G[V\backslash \{u,v\}]$,  $|V_i|< (q-1)|V|/q$, then the DBCP problem has a solution for $c_p=1$, $c_s=q-1$, and it can be found in $O(|E|)$ time.
\end{lemma}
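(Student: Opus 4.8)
The plan is to separate the graph $G$ at the pair $\{u,v\}$ into the connected components $G_1,\dots,G_m$ of $G[V\setminus\{u,v\}]$, group these components into two bundles of roughly balanced total size, and then route an $st$-numbering-like structure (a pseudo-path) through the groups so that the final cut stays connected on both sides while $p$ is kept within $1$. First I would handle the size bundling: order the components by non-increasing size and greedily assign each to whichever bundle currently has smaller total size; by a standard argument, since each $|V_i|<(q-1)|V|/q$, the two bundles $A$ and $B$ (together with $u,v$ placed appropriately) have sizes within a $c_s=q-1$ ratio. Concretely, if $W_A\le W_B$ are the two bundle sizes with $W_A+W_B=|V|-2$, the greedy guarantee combined with the component-size bound gives $W_B/(W_A+2)\le q-1$ once we add $u$ to the smaller side, which is exactly the claimed $c_s$.

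Next comes the connectivity and the weight balancing, which is where the pseudo-path definition earns its keep. The key point is that inside each component $G_i$, since $G$ is 2-connected, $u$ and $v$ each have at least one neighbor, and $G_i\cup\{u,v\}$ is itself 2-connected (or at least has an $st$-numbering from $u$ to $v$ through $G_i$); stringing these together over the components in one bundle yields a pseudo-path between $u$ and $v$ passing through all of bundle $A$, and likewise for $B$. I would then sweep a threshold along this pseudo-path: for each prefix of the ordering we get a candidate $V_1$, and the pseudo-path property ensures that both $G[V_1]$ and $G[V_2]$ are connected at every threshold (every interior node has a lower-indexed and a higher-indexed neighbor, so the two pieces stay connected to their respective endpoints). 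As the threshold moves by one node, $p(V_1)$ changes by exactly $\pm1$; since it starts at a value of one sign (all of $A$ plus $u$, say) and ends at the opposite sign, by discrete intermediate value there is a threshold where $|p(V_1)|\le 1$ — and $p(V_2)=-p(V_1)$ so $|p(V_2)|\le 1$ as well, giving $c_p=1$. One must be slightly careful about where $u$ and $v$ sit and about parity, but because $p(i)=\pm1$ the increments are unit and the sign-change argument goes through cleanly.

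The running time is $O(|E|)$: finding a separation pair and the components is linear, the greedy bundling is linear in the number of components, constructing the pseudo-paths amounts to computing $st$-numberings within the pieces (linear), and the threshold sweep is a single pass recomputing $p(V_1)$ incrementally. The main obstacle I anticipate is the bookkeeping to simultaneously control \emph{both} the size ratio and the connectivity: the greedy size-bundling fixes which component goes to which side, but I then need the pseudo-path to respect that bundling so that the connected prefixes of the sweep are exactly unions of (parts of) the chosen bundles — i.e., I must order the nodes so that bundle $A$ is traversed before bundle $B$, with $u$ at the start and $v$ at the end, and verify that this global ordering is still a legitimate pseudo-path. Checking that the concatenation of the per-component $st$-numberings, with $u$ and $v$ shared across components, yields a valid pseudo-path (every intermediate node has a neighbor earlier and a neighbor later in the combined order) is the technical heart, and it relies on 2-connectedness of each $G_i\cup\{u,v\}$; the rest is the intermediate-value sweep, which is routine.
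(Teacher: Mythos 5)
Your decomposition into components, grouping of the corresponding pseudo-paths into two bundles each of total size at least about $|V|/q$, concatenation into two long pseudo-paths $Q_1,Q_2$, and the observation that prefixes of the concatenated order induce connected subgraphs on both sides all match the paper's proof. The gap is in the final step, which you call ``the intermediate-value sweep, which is routine.'' A discrete intermediate-value argument over \emph{all} prefixes of the global ordering does yield a threshold with $|p(V_1)|\le 1$, but it gives no control over \emph{where} that threshold falls: the zero crossing of $p$ may occur at a prefix of size $3$, or of size $|V|-3$, in which case $\max\{|V_1|/|V_2|,|V_2|/|V_1|\}$ is unbounded rather than $\le q-1$. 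The fact that the two bundles $A$ and $B$ have balanced total sizes says nothing about the size of the particular prefix at which $p$ happens to vanish. So your argument delivers $c_p=1$ and $c_s=q-1$ separately, but the simultaneity of the two bounds is the entire content of the lemma, and it is exactly the point you have not addressed.

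The paper closes this gap as follows. It fixes $V'$ to be the first $|V|/q$ nodes of $Q_1$ counting from $u$ (so the size lower bound holds by construction; WLOG $p(V')\ge 0$), and then grows the set only while its size remains at most $(q-1)|V|/q$, first along the rest of $Q_1$ and then into $Q_2$; if $p$ reaches $0$ anywhere inside this size window, it stops and is done. The crux is the remaining case, where $p$ stays positive throughout the window: letting $V''$ be the grown set of size exactly $(q-1)|V|/q$ and $V'''=V\setminus V''\subseteq Q_2$, so that $p(V''')<0$ and $|V'''|=|V|/q$, the paper builds $V_1$ as a \emph{non-prefix} set. When $|p(V')|\ge|p(V''')|$ it takes a sub-prefix $V_1'\subseteq V'$ of $Q_1$ with $|p(V_1')|=|p(V''')|$ and sets $V_1=V_1'\cup V'''$; when $|p(V')|<|p(V''')|$ it extends $V'$ into $Q_2$ until $p$ first hits $0$, which necessarily happens inside $V'''$. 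In each case one checks that the result is connected on both sides, has $p(V_1)=0$, and has size in the required window. This two-case construction is the technical heart of the lemma and is absent from your proposal; the concatenation-of-pseudo-paths issue that you identify as the main obstacle is handled exactly as you suggest and is not where the difficulty lies.
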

\begin{proof}
Assume for simplicity that $|V|$ is divisible by $q$. There is a separation pair $\{u,v\}\in V$ such that if $G_1,\dots,G_k$ are the connected components of $G\backslash \{u,v\}$, for any $i$, $|V_i|< (q-1)|V|/q$.  Since $G$ is 2-connected, $G_1,\dots,G_k$ can be presented by pseudo-paths $P_1,\dots,P_k$ between $u$ and $v$. Assume $P_1,\dots,P_k$ are in increasing order based on their lengths. There exists two subsets of the pseudo-paths $S_1$ and $S_2$ such that $S_1\cap S_2=\emptyset$, $S_1\cup S_2=\{P_1,\dots,P_{k}\}$ and $\sum_{P_j\in S_i} |P_j| \geq |V|/q-1$ for $i=1,2$. The proof is very simple. If $|P_k|< |V|/q$, the greedy algorithm for the partition problem gives the desired partition of $\{P_1,\dots,P_{k}\}$. If $|P_k|\geq |V|/q$, since $|P_k|< (q-1)|V|/q$, $S_1=\{P_1,\dots,P_{k-1}\}$ and $S_2=\{P_k\}$ is the desired partition.

     Now, if we put all the pseudo-paths in $S_1$ back to back, they will form a longer pseudo-path $Q_1$ between $u$ and $v$. Similarly, we can form another pseudo-path $Q_2$ from the paths in $S_2$.  Without loss of generality we can assume $|Q_1|\geq |Q_2|$. From $u$, including $u$ itself, we count $|V|/q$ of the nodes in $Q_1$ towards $v$ and put them in a set $V'$. Without loss of generality, we can assume $p(V')\geq 0$. If $p(V')=0$, then $(V',V\backslash V')$ is a good partition and we are done. Hence, assume  $p(V')>0$. We keep $V'$ fixed and make a new set $V''$ by continuing to add nodes from $Q_1$ to $V'$ one by one until we get to $v$.
If $p(V'')$ hits 0 as we add nodes one by one, we stop and $(V'',V\backslash V'')$ is a good partition and we are done. So, assume $V''=Q_1\cup \{u\}$ and $p(V'')>0$. Since $|Q_2\cup\{v\}|\geq |V|/q$, $|V''|\leq (q-1)|V|/q$. If $|V''| < (q-1)|V|/q$, we add nodes from $Q_2$ one by one toward $u$ until either $|V''|=0$ or $|V''| = (q-1)|V|/q$. If we hit 0 first (i.e., $p(V'')=0$) and $|V''|\leq (q-1)|V|/q$, define $V_1=V''\backslash\{u\}$, then $(V_1,V\backslash V_1)$ is a good partition. So assume $|V''|=(q-1)|V|/q$ and $p(V'')>0$. Define $V'''=V\backslash V''$. Since $p(V'')>0$ and $|V''|=(q-1)|V|/q$, then $p(V''')<0$ and $|V'''|=|V|/q$. Also notice that $V'''\subseteq Q_2$. We consider two cases. Either $|p(V')|\geq|p(V''')|$ or $|p(V')|<|p(V''')|$.

        If $|p(V')|\geq|p(V''')|$, then if we start from $u$ and pick nodes one by one from $Q_1$ in order, we can get a subset $V'_1$ of $V'$  such that $|p(V'_1)|=|p(V''')|$. Hence, if we define $V_1=V_1'\cup V'''$, then $(V_1,V\backslash V_1)$ is a good partition.

        If $|p(V')|<|p(V''')|$, then we can build a new set $V_1$ by adding nodes one by one from $Q_2$ to $V'$ until $P(V_1)=0$. It is easy to see that since $|p(V')|<|p(V''')|$, then $V_1\backslash V'\subseteq V'''$. Hence, $(V_1, V\backslash V_1)$ is a good partition.
\end{proof}

\begin{corollary}\label{cor:serpar}
If $G$ is a 2-connected series-parallel graph and $\forall i: p(i)=\pm1$, then the DBCP problem has a solution for $c_p=1,~c_s=2$, and the solution can be found in $O(|E|)$ time.
\end{corollary}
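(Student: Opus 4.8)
The plan is to deduce the corollary from Lemma~\ref{lem:parallel} with the choice $q=3$. That lemma already assumes $p(i)=\pm 1$, it produces a partition with $c_p=1$ and $c_s=q-1=2$, and it runs in $O(|E|)$ time, \emph{provided} we can hand it a separation pair $\{u,v\}$ of $G$ such that every connected component of $G\setminus\{u,v\}$ has fewer than $\tfrac{2}{3}|V|$ vertices, and provided such a pair can be found in $O(|E|)$ time. So the whole task reduces to producing a ``balanced'' separation pair in a $2$-connected series-parallel graph. (Graphs with $|V|\le 5$ are trivial, since after deleting any two vertices at most three remain; assume $|V|\ge 6$.)

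To find the balanced separation pair I would use the series-parallel structure of $G$. Being series-parallel, $G$ is $K_4$-minor-free, so its triconnectivity (SPQR) decomposition has only \emph{series} nodes (whose skeleton is a cycle) and \emph{parallel} nodes (whose skeleton is a bundle of at least three parallel edges between two poles), and no rigid, $3$-connected, nodes; this decomposition is computable in linear time. I would pick a centroid node $\mu^\ast$ of the decomposition tree, so that each piece of $G$ hanging off a tree-neighbor of $\mu^\ast$ carries at most half of the vertices of $G$. If $\mu^\ast$ is a parallel node with poles $s,t$, output $\{u,v\}=\{s,t\}$: the components of $G\setminus\{s,t\}$ are contained in the interiors of the hanging pieces and are therefore small. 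If $\mu^\ast$ is a series node, then $G$ is a cyclic chain of $\ell\ge 3$ chunks glued at consecutive vertices $z_1,\dots,z_\ell$; take $u=z_a$ and $v=z_b$ so that the two arcs of the chain are as balanced as possible in total vertex count, which is possible because the centroid property keeps every individual chunk light. An entirely equivalent, more hands-on route is a recursive descent: keep a ``currently active'' two-terminal series-parallel fragment together with an ``external weight'' equal to the number of already-discarded vertices, maintained below $\tfrac{1}{3}|V|$; when the active fragment's interior drops below $\tfrac{2}{3}|V|$ output its two terminals, otherwise descend into the heaviest parallel branch, or, in the series case, split at a median cut vertex. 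Each step costs $O(1)$ amortized, so this is linear time as well.

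The conceptually easy parts are the reduction and the choice of separation pair; the technical heart is the quantitative accounting certifying that every component has fewer than $\tfrac{2}{3}|V|$ vertices. The delicate points are: the pole/terminal vertices are shared among several pieces and must not be double-counted; in the recursion the ``far side'' of the current separation keeps absorbing the discarded vertices, so one needs the invariant (external weight $<\tfrac{1}{3}|V|$) to control it; and, because the crudest bounds land exactly at $\lceil \tfrac{2}{3}|V|\rceil$, one must be slightly careful about integer rounding at the threshold. I would sidestep the rounding by instead proving the stronger statement that $G$ has a separation pair each of whose components has at most $\lfloor |V|/2\rfloor$ vertices (the cycle and series base cases already deliver this with room to spare), which is comfortably below $\tfrac{2}{3}|V|$. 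The expected main obstacle is carrying this $\lfloor |V|/2\rfloor$-balance invariant cleanly through the series/parallel recursion while correctly handling the shared terminals; everything else --- series-parallel recognition and decomposition, centroid finding, and the final call to Lemma~\ref{lem:parallel} --- is standard linear-time work.
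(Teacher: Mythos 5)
Your overall strategy is the paper's: produce a separation pair all of whose components have fewer than $2|V|/3$ vertices and feed it to Lemma~\ref{lem:parallel} with $q=3$; the paper realizes this by walking down the series-parallel derivation tree toward the heaviest child until the subgraph size first drops to at most $2|V|/3$, which is essentially your centroid/heaviest-branch descent. The genuine problem is the ``stronger statement'' you propose to prove in order to sidestep rounding: it is false that every $2$-connected series-parallel graph has a separation pair each of whose components has at most $\lfloor |V|/2\rfloor$ vertices. Take three vertices $s,w,t$ and, for some $m\geq 2$, add $m$ vertices adjacent to both $s$ and $w$, $m$ vertices adjacent to both $w$ and $t$, and $m$ vertices adjacent to both $s$ and $t$. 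This graph is $2$-connected and series-parallel with $n=3m+3$ vertices; no degree-$2$ vertex can belong to a separation pair here, so the only separation pairs are $\{s,w\}$, $\{w,t\}$, $\{s,t\}$, and each of them leaves one component with exactly $2m+1=(2n-3)/3$ vertices, which exceeds $n/2$ once $m\geq 2$. So the $\lfloor n/2\rfloor$-balance invariant cannot be carried through the series/parallel recursion --- it fails already at the top --- and the same example shows that the $2/3$ threshold you are aiming for is essentially optimal.

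The rounding worry you were trying to avoid does not actually arise if you argue at the $2/3$ threshold directly. Stop the descent at the last node $i$ of the derivation tree with $|V_i|>2|V|/3$; then every child $j$ has $|V_j|\leq 2|V|/3$ and the outside satisfies $|V\setminus V_i|<|V|/3$. If $G_i$ is a parallel composition, its two poles separate the graph into the interiors of the children (each with at most $2|V|/3-2$ vertices) and the outside components (fewer than $|V|/3$ vertices). If $G_i$ is a series composition of $G_j$ and $G_k$ with $|V_j|\geq|V_k|$, then $|V|/3<|V_j|\leq 2|V|/3$, and the two terminals of $G_j$ separate its interior (at most $2|V|/3-2$ vertices) from $V\setminus V_j$ (fewer than $2|V|/3$ vertices). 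Every bound is strictly below $2|V|/3$, exactly as Lemma~\ref{lem:parallel} requires, with no integrality trouble. With that substitution your argument is correct and coincides with the paper's proof.
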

The graph in Figure~\ref{fig:example_bound} with $s=1$ shows that these parameters are
the best possible for series parallel graphs: if $c_p=O(1)$ then $c_s$ must be at least 2.

To generalize Lemma~\ref{lem:parallel} to all 2-connected graphs, we need to define the \emph{contractible} subgraph and the \emph{contraction} of a given graph as below.
\begin{definition}\label{def:contraction}
We say an induced subgraph $G_1$ of a 2-connected graph $G$  is \emph{contractible}, if there is a separating pair $\{u,v\}\subset V$ such that $G_1=(V_1,E_1)$ is a connected component of $G[V\backslash\{u,v\}]$. Moreover, if we replace $G_1$ by a weighted edge $e'$ with weight $w(e')=|V_1|$ between the nodes $u$ and $v$ in $G$ to obtain a smaller graph $G'$, we say $G$ is \emph{contracted} to $G'$.
\end{definition}
\begin{remark}
Notice that every contractible subgraph of a 2-connected graph $G$ can also be represented by a pseudo-path between its associated separating pair. We use this property in the proof of Theorem~\ref{th:2-connected}.
\end{remark}
\begin{figure}[t]
\centering
\begin{subfigure}[b]{0.6\textwidth}
\includegraphics[scale=0.5]{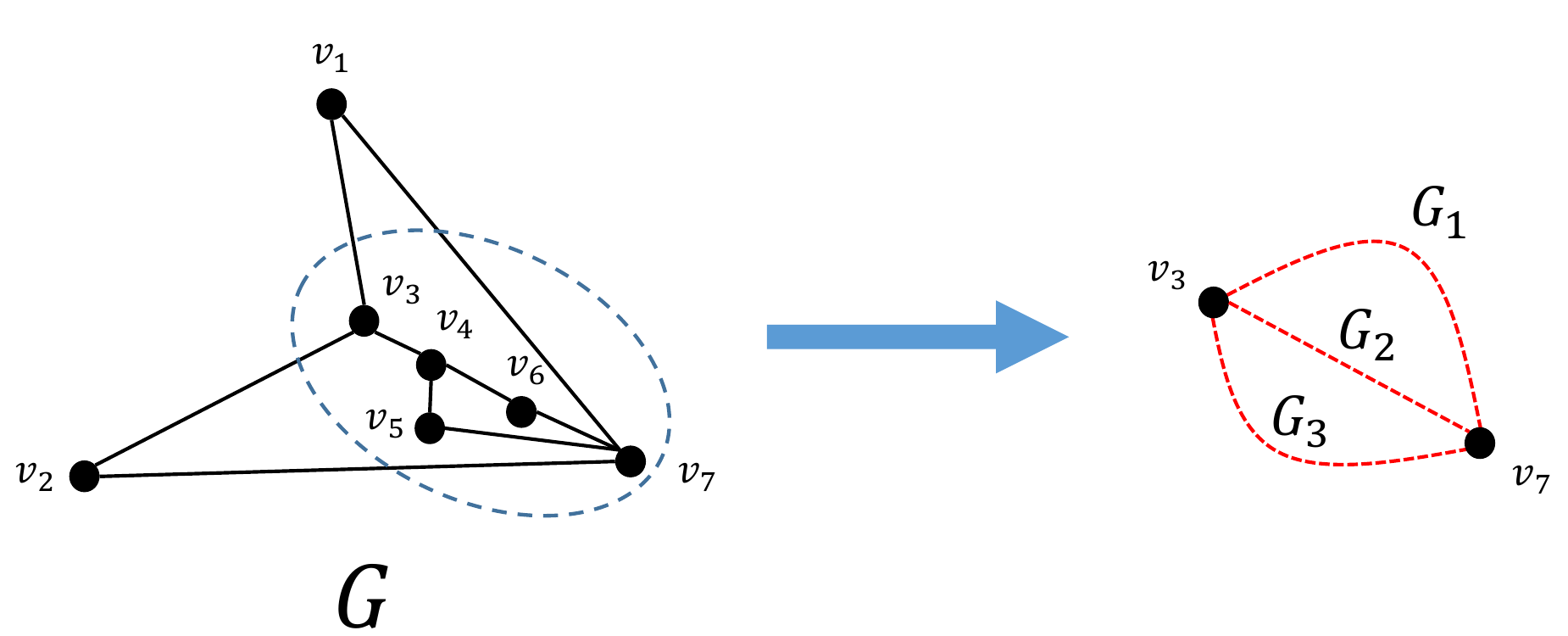}
\caption{}
\end{subfigure}
\begin{subfigure}[b]{\textwidth}
\includegraphics[scale=0.5]{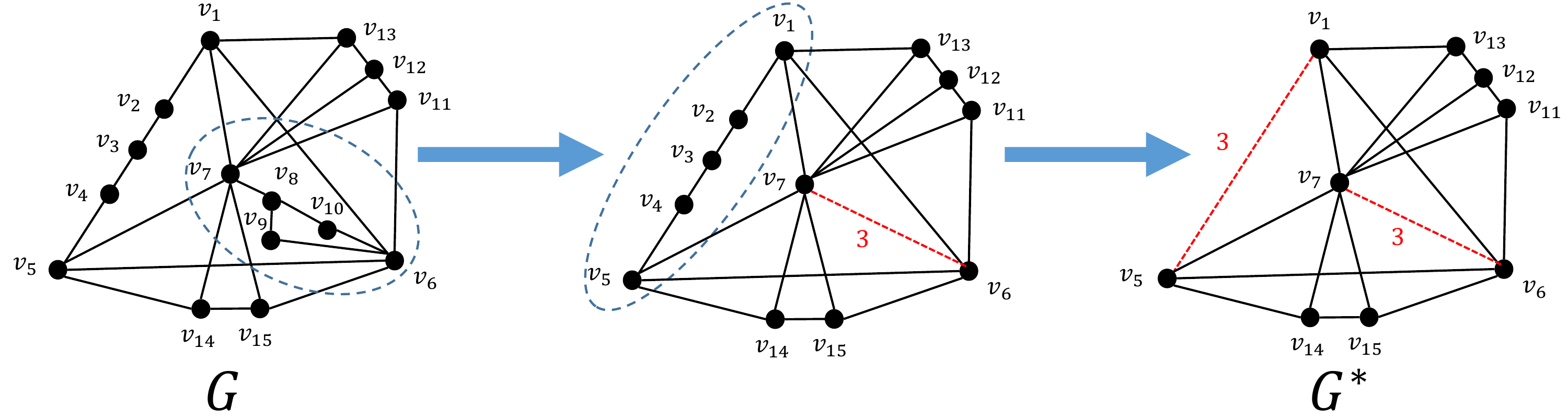}
\caption{}
\end{subfigure}
\caption{Lemma~\ref{lem:2-connected-pair}.}
\label{fig:lem_2-connected-pair}
\end{figure}
Using the notion of the graph contraction, in the following lemma, we show that to partition a 2-connected graph, we can reduce it to one of two cases: either $G$ can be considered as a graph with a set of short pseudo-paths between two nodes, or it can be contracted into a 3-connected graph as illustrated in Fig.~\ref{fig:lem_2-connected-pair}.
\begin{lemma}\label{lem:2-connected-pair}
In every 2-connected graph $G=(V,E)$, given an integer $q\geq 3$, one of the following cases holds, and we can determine which in $O(|E|)$ time:
\begin{enumerate}
\item There is a separation pair $\{u,v\}\subset V$ such that if $G_1,\dots,G_k$ are the connected components of $G[V\backslash \{u,v\}]$, for all $i$, $|V_i|< (q-1)|V|/q$.
\item After a set of contractions, $G$ can be transformed into a 3-connected graph $G^*=(V^*,E^*)$ with weighted edges representing contracted subgraphs such that for every $e^*\in E^*$, $w(e^*)<|V|/q$.
\end{enumerate}
\end{lemma}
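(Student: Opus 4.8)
The plan is to read off the dichotomy from the decomposition of $G$ into triconnected components --- its SPQR tree $\mathcal T$ --- which is computable in $O(|E|)$ time and which records every separation pair of $G$ together with the sizes of the pieces that pair cuts off. Each node of $\mathcal T$ has a skeleton that is a cycle (S-node), a bond of two vertices joined by at least three parallel edges (P-node), or a simple $3$-connected graph (R-node); each skeleton edge is either a real edge of $G$ or a \emph{virtual} edge representing an adjacent subtree, and to a virtual edge $e$ we attach the weight $w(e)$ equal to the number of vertices of $G$ strictly inside the subgraph it abbreviates (real edges getting weight $1$). Collapsing such a subgraph into $e$ is exactly a sequence of contractions in the sense of Definition~\ref{def:contraction}. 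In this language, case~2 asks for an R-node whose every skeleton edge has weight $<|V|/q$, and case~1 asks for a separation pair of $G$ none of whose removed components has $\ge (q-1)|V|/q$ vertices.

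Call a separation pair $\{u,v\}$ \emph{deficient} if some component of $G[V\setminus\{u,v\}]$ has $\ge (q-1)|V|/q$ vertices. Since $q\ge 3$ gives $2(q-1)/q>1$, a deficient pair has a \emph{unique} heavy component, and all its other components together contain at most $|V|-2-(q-1)|V|/q=|V|/q-2$ vertices; in particular each of them is a contractible subgraph on fewer than $|V|/q$ vertices. If $G$ has a non-deficient separation pair, output it: case~1 holds. So assume every separation pair of $G$ is deficient, and orient every tree edge of $\mathcal T$ toward the side containing the heavy component of its separation pair (well defined and unique). As $\mathcal T$ is a finite tree, these orientations lead from any starting node to a sink $\mu^\ast$, all of whose incident tree edges point into it. For each virtual edge $e$ of $\mathrm{sk}(\mu^\ast)$ the side abbreviated by $e$ contains no heavy component, so by the count above $w(e)\le |V|/q-2<|V|/q$, and real edges have weight $1<|V|/q$ provided $|V|>q$. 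Hence if $\mu^\ast$ is an R-node, $\mathrm{sk}(\mu^\ast)$ is a $3$-connected graph into which $G$ contracts with every edge weight below $|V|/q$, i.e.\ case~2 holds.

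So it remains to rule out $\mu^\ast$ being a P- or S-node. If $\mu^\ast$ were a P-node with defining pair $\{u,v\}$, then $G[V\setminus\{u,v\}]$ would be the disjoint union of the interiors of its $\ge 2$ virtual edges, each of size $<|V|/q<(q-1)|V|/q$, so $\{u,v\}$ would not be deficient --- a contradiction. If $\mu^\ast$ were an S-node with cycle $v_1,\dots,v_\ell$, consider the separation pairs $\{v_1,v_{j+1}\}$, $j=1,\dots,\ell-1$; the piece $P_j$ such a pair cuts off on the $v_2,\dots,v_j$ side satisfies $P_1<|V|/q$, $P_{\ell-1}>(q-1)|V|/q-2$, and $P_{j+1}-P_j\le 1+|V|/q$, and since $1+|V|/q$ is strictly below the length $(q-2)|V|/q+2$ of the interval $\bigl(|V|/q-2,(q-1)|V|/q\bigr)$ --- this is exactly where $q\ge 3$ is used --- some $P_j$ lies in that interval, which makes $\{v_1,v_{j+1}\}$ non-deficient, again a contradiction. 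Therefore $\mu^\ast$ is an R-node and case~2 holds. Since $\mathcal T$ has size $O(|E|)$ and everything above is a constant number of passes over it, the procedure runs in $O(|E|)$ time.

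The real work, and where I expect the difficulty to concentrate, is the third paragraph: checking that the pairs produced inside P- and S-nodes are genuine separation pairs of $G$ with exactly the claimed component sizes (which uses that the pieces recorded along $\mathcal T$ form a laminar family), pinning down the open/closed endpoints in the S-node sliding estimate, and handling the small-graph degeneracies that the assumption $|V|>q$ sweeps aside --- graphs on few vertices, bonds or triangles appearing as whole skeletons, and the regime $|V|\le q$, where case~2 is vacuously impossible and one exhibits case~1 directly. Once the correct case is known, the partitioning algorithms that consume it --- for case~1 essentially the argument of Lemma~\ref{lem:parallel}, and for case~2 the $3$-connected machinery extended to weighted (contracted) edges --- take over.
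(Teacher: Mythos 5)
Your proof is correct and rests on the same foundation as the paper's: the Hopcroft--Tarjan decomposition into triconnected components, which the paper invokes for the $O(|E|)$ bound. The difference is in how the dichotomy is extracted. The paper argues iteratively -- pick a separation pair, and if some component is heavy ($\geq (q-1)|V|/q$), contract the light ones and recurse into the heavy side -- which is exactly a walk along your edge orientation toward the sink; you instead phrase it as a global sink argument on the SPQR tree and then classify the sink. Where your write-up genuinely adds value is the explicit elimination of the S-node (polygon) sink via the sliding estimate $P_{j+1}-P_j\leq 1+|V|/q$ against an interval of length $(q-2)|V|/q+2$: the paper's proof never addresses what happens if the contraction process lands on a cycle skeleton, which is not 3-connected, so case~2 cannot apply there and one must exhibit a non-deficient pair directly -- precisely what your intermediate-value argument does, and precisely where the hypothesis $q\geq 3$ is consumed. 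The P-node elimination is likewise implicit in the paper. The residual items you flag (verifying that skeleton pairs are genuine separation pairs of $G$ with the stated component sizes, endpoint conventions in the sliding argument, and the degenerate regime $|V|\leq q$ or tiny skeletons) are real but routine; the paper sweeps them under the same rug. In short: same decomposition, a cleaner and more complete argument on top of it.
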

\begin{proof}
If there is no separation pairs in $G$, then $G$ is 3-connected and there is nothing left to prove. So assume $\{u,v\}\subset V$ is a separation pair and  $G_1,\dots,G_k$ are the connected components of $G[V\backslash \{u,v\}]$. If $\forall i, |V_i|< (q-1)|V|/q$, we are done. So let's assume there is a connected component $G_j$ such that $|V_j|\geq (q-1)|V|/q$. Then for every $i\neq j$,  $G_i$ can be contracted and represented by an edge of weight less than $|V|/q$ between $u$ and $v$. Now, we repeat the process by considering the weight of the edges in the size of each connected component. An example for each case is shown in Fig.~\ref{fig:lem_2-connected-pair} for $q=3$. We can find either a suitable separation pair as in case 1 or a
suitable contracted graph $G^*$ as in case 2 in linear time using the Hopcroft-Tarjan
algorithm for finding the triconnected components \cite{hopcroft1973dividing}.
\end{proof}
 Using Lemma~\ref{lem:2-connected-pair} for $q=4$, then Lemma~\ref{lem:parallel}, and the idea of the proof for  Theorem~\ref{th:3-connected}, we can prove that when $G$ is 2-connected and all $p(i)=\pm 1$, the DBCP problem has a solution for $c_p=1$ and $c_s=3$. There are some subtleties in adapting Lemma~\ref{lem:cut} for this case to account for the fact that the edges of $G^*$ are now weighted, and the partition $(V_1', V_2')$ has to take into account the edge weights.
We find a suitable convex embedding of the 3-connected graph $G^*$
and then embed the nodes of the contracted pseudopaths appropriately
along the segments corresponding to the weighted edges.  Some care is needed
to carry out the argument of the 3-connected case, since as the line
tangent to the circle rotates, the order of the projections of many nodes may change at once, namely the nodes on an edge perpendicular to the rotating line.
The details of the proof are given in Appendix~\ref{sec:proof2}.
We have:
\begin{theorem}\label{th:2-connected}
If $G$ is 2-connected,  $\forall i, p(i)=\pm1$, then the DBCP problem has a solution for $c_p=1$ and $c_s=3$. Moreover, this solution can be found in polynomial time.
\end{theorem}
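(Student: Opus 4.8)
The plan is to combine Lemma~\ref{lem:2-connected-pair} (with $q=4$) with the machinery developed for the $3$-connected case. Applying Lemma~\ref{lem:2-connected-pair} to $G$ with $q=4$, we land in one of two cases. In Case~1 there is a separation pair $\{u,v\}$ all of whose components $G_i$ satisfy $|V_i|<3|V|/4=(q-1)|V|/q$; then Lemma~\ref{lem:parallel} with $q=4$ directly produces a connected partition with $c_p=1$ and $c_s=q-1=3$, and we are done. So the real work is Case~2, where after a sequence of contractions $G$ becomes a $3$-connected graph $G^*=(V^*,E^*)$ whose (possibly parallel) edges $e^*$ carry weights $w(e^*)=|V_{e^*}|<|V|/4$, each $e^*$ standing for a contracted subgraph. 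Recall that (as noted in the remark following Definition~\ref{def:contraction}) each such subgraph can be represented by a pseudo-path $Q_{e^*}$ between the endpoints of $e^*$, and that any prefix of $Q_{e^*}$ taken from either endpoint induces a connected subgraph containing that endpoint, while the complementary suffix induces a connected subgraph containing the other endpoint.

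For Case~2 I would follow the proof of Theorem~\ref{th:3-connected} with $G^*$ in place of the $3$-connected graph, measuring the ``size'' of a vertex set by the total number of original vertices it represents (vertices of $V^*$ plus the internal vertices of the pseudo-paths assigned to it). First, prove a weighted analogue of Lemma~\ref{lem:cut}: using the non-separating cycle of~\cite{cheriyan1988finding} in $G^*$, produce a triple $\{u,v,w\}$ with $\{u,w\},\{v,w\}\in E^*$ and a connected partition $(V_1',V_2')$ of the \emph{original} vertex set $V$ with $w\in V_1'$, $u,v\in V_2'$, and the weighted size of $V_2'$ at most $|V|/2$; when the non-separating cycle has to be cut, split the pseudo-paths of the crossing edges so that both sides stay connected. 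Then prove a weighted analogue of Lemma~\ref{lem:embedding}: take a convex $X$-embedding of $G^*$ in general position with $X=\{u,v,w\}$ placed as in Lemma~\ref{lem:embedding} and the elasticity coefficients tuned so that the $V_1'$-vertices of $V^*$ lie near $w$ and the $V_2'$-vertices lie near the segment $uv$, and then place the internal vertices of each pseudo-path $Q_{e^*}$ on the segment joining the images of the endpoints of $e^*$, in pseudo-path order.

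With this embedding in hand, run the sweeping argument of Lemma~\ref{lem:3-connected-tri}/Theorem~\ref{th:3-connected}: rotate a directed line $\mathcal{L}$ tangent to a circle around the triangle $f(u),f(v),f(w)$ from a point $A$ to the antipodal point $B$, and at each position take $V_1$ to be the first, by weighted count, roughly $|V|/2$ vertices in the projection order onto $\mathcal{L}$. The geometric properties inherited from the weighted version of Lemma~\ref{lem:embedding} guarantee, exactly as in the $3$-connected case, that each such $V_1$ and its complement are connected (in particular, whenever $u$ and $v$ fall on the same side, so does a $u$--$v$ path), and that $p(V_1)>0$ at $A$ forces $p(V_1)<0$ at $B$, since the two sides swap (if $p(V_1)$ is already $0$ or $\pm1$ at $A$ we are done immediately). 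Since every edge weight is $<|V|/4$, the weighted size of $V_1$ never leaves the window $(|V|/4,3|V|/4)$, so $c_s=3$ is maintained throughout the sweep; and an intermediate-value argument on $p(V_1)$, whose value changes in small increments, yields a position where $|p(V_1)|\le 1$.

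The main obstacle, and the one point where Case~2 genuinely differs from the $3$-connected proof, is that when $\mathcal{L}$ becomes perpendicular to an edge $e^*$ of $G^*$ the whole block of up to $w(e^*)=\Theta(|V|)$ vertices lying on that segment reverses its projection order simultaneously, so a single step of the sweep could change $p(V_1)$ by as much as $2\,w(e^*)$ and the naive intermediate-value argument breaks. To handle this, whenever the ``cut'' of the current partition lies inside such a block $B=Q_{e^*}$, I would not cross the perpendicular direction abruptly but instead pause the rotation and move the cut vertex-by-vertex inside the pseudo-path $Q_{e^*}$ --- legitimate precisely because every prefix of $Q_{e^*}$ from either endpoint is connected to that endpoint --- first draining $B$ entirely onto one side, then passing the perpendicular direction, and then refilling $B$ from the other side; each such elementary move changes $p(V_1)$ by $\pm1$ and, thanks to the slack $w(e^*)<|V|/4$ together with $c_s=3$, keeps the weighted size of $V_1$ inside $(|V|/4,3|V|/4)$. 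Thus $p(V_1)$ again varies in bounded increments along the (now refined) sweep, the intermediate-value argument goes through, and since the non-separating cycle, the convex embedding (found by solving a linear system, as in~\cite{linial1988rubber}), the contraction decomposition (via~\cite{hopcroft1973dividing}), and the number of sweep steps are all polynomial, the whole construction runs in polynomial time.
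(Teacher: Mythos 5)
Your proposal follows essentially the same route as the paper: Lemma~\ref{lem:2-connected-pair} with $q=4$, Lemma~\ref{lem:parallel} for the separation-pair case, and for the contracted $3$-connected case the weighted analogues of Lemmas~\ref{lem:cut} and~\ref{lem:embedding} with pseudo-path vertices placed along the edge segments, a tangent-line sweep, and the same key repair step of moving the cut vertex-by-vertex through a pseudo-path block (your ``drain and refill'' is the paper's comparison of the partitions just before and just after the critical direction, augmented one vertex of $P$ at a time). The only under-specified point is the weighted version of Lemma~\ref{lem:cut}: the paper needs a separate argument (finding a non-cutpoint outside the non-separating cycle via a longest path in a spanning tree) for the case where the cycle is light in $G^*$ but heavy in weight and no outside vertex has two neighbors on it, which your ``split the crossing pseudo-paths'' remark does not quite cover.
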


 Similar to  Corollary~\ref{cor:3-general}, the approach used in the proof of Theorem~\ref{th:2-connected}, can also be used for the case when the weights are arbitrary (not necessarily $\pm 1$) and  $p(V)\neq 0$. It is easy to verify that in this case, if $G$ is 2-connected, the DBCP problem has a
connected partition $(V_1,V_2)$ such that
$|p(V_1)-p(V)/2|, |p(V_2)-p(V)/2| \leq \max_{j\in V} |p(j)|$ and
$|V_1|, |V_2| \geq |V|/4$.
\begin{corollary}\label{cor:2-general}
If $G$ is 2-connected, then the DBCP problem (with general $p$ and not necessarily satisfying $p(V)= 0$) has a solution for $c_p=\max_{j\in V} |p(j)|$ and $c_s=3$. Moreover, this solution can be found in polynomial time.
\end{corollary}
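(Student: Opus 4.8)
The plan is to rerun the proof of Theorem~\ref{th:2-connected} with two cosmetic relaxations: every target value $0$ for a part weight $p(V_i)$ is replaced by the target $p(V)/2$, and every increment of size $1$ in the weight bookkeeping is replaced by $\max_{j\in V}|p(j)|$. Concretely, I would first apply Lemma~\ref{lem:2-connected-pair} with $q=4$. This places us in one of two situations: (1) there is a separation pair $\{u,v\}$ all of whose components $G_i$ satisfy $|V_i| < 3|V|/4$; or (2) after a sequence of contractions $G$ becomes a $3$-connected graph $G^*$ whose weighted edges each represent a contracted pseudo-path of fewer than $|V|/4$ nodes. In either case the size split produced will have both sides of cardinality between $|V|/4$ and $3|V|/4$, which gives $c_s=3$; the only quantity that changes relative to the $\pm 1$ analysis is the weight guarantee, which degrades to $|p(V_i)-p(V)/2|\le \max_{j\in V}|p(j)|$.

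For case (1), I would follow the proof of Lemma~\ref{lem:parallel} essentially verbatim at the level of the size argument: represent the components as pseudo-paths $P_1,\dots,P_k$ between $u$ and $v$, split them (greedily, or via the $|P_k|\ge |V|/4$ dichotomy) into two bundles $S_1,S_2$, concatenate each bundle into a long pseudo-path $Q_1,Q_2$, and then sweep nodes one at a time along $Q_1$ and, if needed, $Q_2$, maintaining a prefix set whose induced subgraph and complement are both connected and whose size runs through every integer in a window of width $\ge |V|/4$ contained in $[\,|V|/4,\,3|V|/4\,]$. The only modification is in tracking $p$: adding one node changes the prefix weight by at most $\max_{j\in V}|p(j)|$, so instead of concluding ``the prefix weight hits $0$ exactly'' I conclude that some prefix has weight within $\max_{j\in V}|p(j)|$ of $p(V)/2$; the case comparisons of Lemma~\ref{lem:parallel} between $|p(V')|$ and $|p(V''')|$ carry over with $p(V)/2$ in place of $0$.

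For case (2), I would reuse the $3$-connected machinery (Lemma~\ref{lem:cut}, Lemma~\ref{lem:embedding}, and the rotating-tangent sweep used for Theorem~\ref{th:3-connected} and Theorem~\ref{th:2-connected}), applied to $G^*$: obtain a convex $X$-embedding of $G^*$ with the geometric properties of Lemma~\ref{lem:embedding} adapted to weighted edges, place the internal nodes of each contracted pseudo-path in order along the corresponding segment, and rotate the directed tangent line. At a generic rotation the projection order yields a prefix set whose two sides are connected in $G$ and whose sizes lie in $[\,|V|/4,\,3|V|/4\,]$ because every weighted edge of $G^*$ has weight $<|V|/4$; as the line rotates, the prefix weight sweeps from one side of $p(V)/2$ to the other. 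When the line becomes perpendicular to a segment carrying a whole pseudo-path, I would not move the entire pseudo-path across at once but instead uncover its internal nodes one at a time in pseudo-path order (which preserves connectivity of both sides), so the prefix weight still changes in steps of at most $\max_{j\in V}|p(j)|$; a discrete intermediate-value argument then yields a prefix with $|p(V_1)-p(V)/2|\le \max_{j\in V}|p(j)|$. Polynomiality is inherited from the earlier results: the embedding solves a linear system, the triconnected components come from the Hopcroft--Tarjan algorithm, and the sweep has $O(|V|^2)$ breakpoints.

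The main obstacle is the one already flagged for Theorem~\ref{th:2-connected}, now compounded by general weights: at a rotating-line breakpoint corresponding to a weighted edge, the naive sweep would relocate an entire contracted pseudo-path---up to $|V|/4$ nodes of arbitrary total weight---in a single step, wrecking both the weight intermediate-value argument and potentially the size bounds. Resolving it requires interleaving the geometric rotation with an internal, connectivity-preserving node-by-node sweep of the pseudo-path being crossed, and verifying that the size window produced by the embedding (each side $\ge |V|/4$) still contains the stopping point after a shift of $<|V|/4$ nodes. Making the constants in the region inequalities of Lemma~\ref{lem:embedding} line up with $q=4$ so that this window is genuinely nonempty, while keeping both parts connected throughout the combined sweep, is the delicate step; everything else is a routine transcription of the $\pm 1$ proof with $0$ replaced by $p(V)/2$ and $1$ replaced by $\max_{j\in V}|p(j)|$.
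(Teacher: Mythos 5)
Your proposal is correct and follows essentially the same route as the paper: the paper itself only asserts this corollary by observing that the proof of Theorem~\ref{th:2-connected} carries over with the target $0$ replaced by $p(V)/2$ and the unit weight steps replaced by steps of at most $\max_{j\in V}|p(j)|$, and your case analysis (Lemma~\ref{lem:2-connected-pair} with $q=4$, the pseudo-path sweep for case (1), and the rotating-tangent argument on $G^*$ with node-by-node traversal of a contracted pseudo-path at a weighted-edge breakpoint for case (2)) is exactly the intended adaptation, including the resolution of the pseudo-path-crossing issue that the paper handles in case (b) of the proof of Theorem~\ref{th:2-connected}.
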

\section{Graphs with Two Types of Nodes}\label{sec:blue_red}
Assume $G$ is a connected graph with nodes colored either red ($R\subseteq V$) or blue ($B\subseteq V$). Let $|V|=n$, $|R|=n_r$, and $|B|=n_b$.
If $G$ is 3-connected, set $p(i)=1$ if $i \in R$ and $p(i) =-1$ if $i \in B$.
Corollary~\ref{cor:3-general} implies then that there is always a connected partition $(V_1,V_2)$ of $V$  that splits both the blue and the red nodes evenly (assuming $n_r$ and $n_b$ are both even), i.e.,  such that $|V_1|=|V_2|$, $|R\cap V_1|=|R\cap V_2|$, and $|B\cap V_1|=|B\cap V_2|$. (If $n_r$ and/or $n_b$ are not even, then one side will
contain one more red or blue node.)
\begin{corollary}\label{cor:blue_red3}
Given a 3-connected graph $G$ with nodes colored either red ($R\subseteq V$) or blue ($B\subseteq V$). There is always a partition $(V_1,V_2)$ of $V$ such that $G[V_1]$ and $G[V_2]$ are connected, $|V_1|=|V_2|$, $|R\cap V_1|=|R\cap V_2|$, and $|B\cap V_1|=|B\cap V_2|$ (assuming $|R|$ and $|B|$ are both even). Such a partition can be
computed in polynomial time.
\end{corollary}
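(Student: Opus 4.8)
The plan is to derive the statement directly from Corollary~\ref{cor:3-general} by choosing one well-chosen weight function, followed by a short parity check. Observe first that it suffices to produce a connected partition $(V_1,V_2)$ with $|V_1|=|V_2|$ and $|R\cap V_1|=|R\cap V_2|$: since $|B\cap V_i|=|V_i|-|R\cap V_i|$, these two conditions automatically force $|B\cap V_1|=|B\cap V_2|$, so only one colour class needs to be controlled on top of the size constraint. Note also that $n=n_r+n_b$ is even, so the target $|V_1|=|V_2|=n/2$ is feasible.

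First I would define $p:V\to\{+1,-1\}$ by $p(i)=+1$ for $i\in R$ and $p(i)=-1$ for $i\in B$, so that $p(V)=n_r-n_b$, which is even. Applying Corollary~\ref{cor:3-general} with $c_p=\max_{i\in V}|p(i)|=1$ and $c_s=1$ yields, in polynomial time, a partition $(V_1,V_2)$ with $G[V_1],G[V_2]$ connected, $|V_1|=|V_2|=n/2$, and $|p(V_1)-p(V)/2|\le 1$. For the parity step, note that for any $S\subseteq V$ with $|S|=n/2$ we have $p(S)=|R\cap S|-|B\cap S|=2|R\cap S|-n/2$, hence $p(V_1)\equiv n/2\pmod 2$; and since $n_b$ is even, $p(V)/2=(n_r-n_b)/2\equiv(n_r+n_b)/2=n/2\pmod 2$. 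Therefore $p(V_1)-p(V)/2$ is an even integer of absolute value at most $1$, so it equals $0$, i.e.\ $p(V_1)=p(V)/2=(n_r-n_b)/2$.

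It then remains only to read off the colour counts. From $p(V_1)=2|R\cap V_1|-n/2$ we get $|R\cap V_1|=\tfrac12\big(p(V_1)+n/2\big)=\tfrac12\big((n_r-n_b)/2+(n_r+n_b)/2\big)=n_r/2$, whence $|R\cap V_2|=n_r/2$ and $|B\cap V_i|=n/2-n_r/2=n_b/2$ for $i=1,2$, as required. I do not expect a genuine obstacle here: all the substance is inside Corollary~\ref{cor:3-general} (and ultimately the convex-embedding/rotating-line argument of Theorem~\ref{th:3-connected}), and the only thing to watch is the parity of $p(V)$ and of $p(V_1)$, which is precisely why both $n_r$ and $n_b$ must be even — when they are not, the same bound $|p(V_1)-p(V)/2|\le1$ still pins $p(V_1)$ to within one unit of the ideal value, which accounts for the ``one extra red or blue node'' remark accompanying the corollary.
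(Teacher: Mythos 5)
Your proof is correct and follows essentially the same route as the paper: assign $p=+1$ to red and $p=-1$ to blue, invoke Corollary~\ref{cor:3-general} to get $|V_1|=|V_2|$ with $|p(V_1)-p(V)/2|\le 1$, and then use a parity argument (relying on $|R|$ and $|B|$ being even) to upgrade the bound of $1$ to exact equality. The only cosmetic difference is that the paper runs the parity check on the integer $r_1=|R\cap V_1|$ directly rather than on $p(V_1)-p(V)/2$; both are equivalent.
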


If $G$ is only 2-connected, we may not always get a perfect partition.
Assume wlog that $n_r \leq n_b$. If for every $v\in R$ and $u\in B$, we set $p(v)=1$ and $p(u)=-n_r/n_b$, Corollary~\ref{cor:2-general} implies that there is always a connected partition $(V_1,V_2)$ of $V$ such that  both $|(|R\cap V_1|-n_r/n_b |B\cap V_1|)|\leq 1$ and  $|(|R\cap V_2|-n_r/n_b |B\cap V_2|)|\leq 1$, and also $\max\{\frac{|V_1|}{|V_2|},\frac{|V_2|}{|V_1|}\}\leq 3$.
Thus, the ratio of red to blue nodes in each side $V_i$ differs from
the ratio $n_r/n_b$ in the whole graph by $O(1/n)$. Hence if the
numbers of red and blue nodes are $\omega(1)$, then the two types are
presented in both sides of the partition in approximately the same
proportion as in the whole graph.

\begin{corollary}
Given a 2-connected graph $G$ with nodes colored either red ($R\subseteq V$) or blue ($B\subseteq V$), and assume wlog $|R| \leq |B|$. We can always find
in polynomial time a partition $(V_1,V_2)$ of $V$
such that $G[V_1]$ and $G[V_2]$ are connected, $|V_1|,|V_2| \geq |V|/4$,
and the ratio of red to blue nodes in each side $V_i$ differs from the ratio $|R|/|B|$
in the whole graph by $O(1/n)$.
\end{corollary}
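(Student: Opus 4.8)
The plan is to reduce the red/blue statement to Corollary~\ref{cor:2-general} by choosing the weight function appropriately. Assume without loss of generality that $|R|\le|B|$, and set $p(v)=|B|$ for every $v\in R$ and $p(u)=-|R|$ for every $u\in B$ (scaling by a positive integer rather than using the fraction $n_r/n_b$ avoids non-integer weights, but does not change anything essential). Then $p(V)=|R|\cdot|B|-|B|\cdot|R|=0$, so in fact we are in the zero-sum case, but even if we did not scale we could invoke the $p(V)\neq 0$ version. Apply Corollary~\ref{cor:2-general}: we obtain in polynomial time a connected partition $(V_1,V_2)$ with $|p(V_i)|\le c_p=\max_{j\in V}|p(j)|=|B|$ and $\max\{|V_1|/|V_2|,|V_2|/|V_1|\}\le 3$; the latter is equivalent to $|V_1|,|V_2|\ge|V|/4$ since $|V_1|+|V_2|=|V|$.

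Next I would translate the bound $|p(V_i)|\le|B|$ back into a statement about the ratio of red to blue nodes. For each $i$, $p(V_i)=|B|\,|R\cap V_i|-|R|\,|B\cap V_i|$, so $\big|\,|B|\,|R\cap V_i|-|R|\,|B\cap V_i|\,\big|\le|B|$. Dividing by $|B|\,|B\cap V_i|$ (when $|B\cap V_i|>0$) gives
\[
\left|\frac{|R\cap V_i|}{|B\cap V_i|}-\frac{|R|}{|B|}\right|\le\frac{1}{|B\cap V_i|}.
\]
Since $|B\cap V_i|$ is within a constant factor of $|B|$ up to lower-order terms — more precisely, because $|V_i|\ge|V|/4$ and the blue nodes are the majority color, a short counting argument shows $|B\cap V_i|=\Omega(n)$ unless $|B|$ itself is $O(1)$ — the right-hand side is $O(1/n)$, which is the claimed conclusion. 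The degenerate case $|B\cap V_i|=0$ forces $|R\cap V_i|=0$ as well by the inequality above (since then $|R|\,|B\cap V_i|$ would have to be within $|B|$ of zero, but actually we get $|B|\,|R\cap V_i|\le|B|$, i.e.\ $|R\cap V_i|\le 1$), which can only happen for trivially small graphs and is easily handled separately.

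The main obstacle I anticipate is the step showing $|B\cap V_i|=\Omega(n)$, i.e.\ that the majority color cannot be almost entirely concentrated on one side of the partition. This does not follow purely from $|V_i|\ge|V|/4$ together with the weight bound in an entirely trivial way: one must combine the size lower bound on $V_i$ with the weight inequality to argue that $V_i$ cannot consist almost exclusively of red nodes when red is the minority. Concretely, if $|B\cap V_i|$ were $o(n)$ then $|R\cap V_i|\ge|V|/4-o(n)$, and plugging into $|p(V_i)|\le|B|$ yields $|B|\,(|V|/4-o(n))-|R|\cdot o(n)\le|B|$, which forces $|B|\le O(1)$ — a contradiction with $|B|=\omega(1)$. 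So the argument is short but needs to be spelled out carefully; everything else is a direct appeal to Corollary~\ref{cor:2-general} plus elementary algebra.
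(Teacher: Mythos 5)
Your proposal is correct and follows essentially the same route as the paper: the paper sets $p(v)=1$ on red nodes and $p(u)=-n_r/n_b$ on blue nodes and invokes Corollary~\ref{cor:2-general}, which is exactly your construction up to the harmless integer rescaling by $|B|$. Your only addition is spelling out why $|B\cap V_i|=\Omega(n)$ --- a detail the paper leaves implicit --- and this follows even more directly than your asymptotic contradiction: the weight bound gives $|R\cap V_i|\le 1+\frac{|R|}{|B|}|B\cap V_i|\le 1+|B\cap V_i|$, which with $|V_i|\ge|V|/4$ yields $|B\cap V_i|\ge |V|/8-1/2$ outright.
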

\section{Conclusion}\label{sec:conclusion}
In this paper, we introduced and studied the problem of partitioning a graph
into two connected subgraphs that satisfy simultaneouly two objectives:
(1) they balance the supply and demand within each side of the partition
(or more generally,  for the case of $p(V) \neq 0$, they split approximately equally
the excess supply/demand between the two sides), and (2) the two sides are large and
have roughly comparable size (they are both $\Omega(|V|)$).
We showed that for 2-connected graphs it is always possible to achieve both objectives at the same time, and for 3-connected graphs there is a partition
that is essentially perfectly balanced in both objectives. Furthermore,
these partitions can be computed in polynomial time.
This is a paradigmatic bi-objective balancing problem.
We observed how it can be easily used to find a connected partition of a graph with two types of nodes that is balanced with respect to the sizes of both types.
Overall, we believe that the novel techniques used in this paper can be applied to partitioning  heterogeneous networks in various contexts.

There are several interesting further directions that suggest themselves.
First, extend the theory and algorithms to find doubly balanced connected partitions to more than two parts.
Second, even considering only the supply/demand objective, does the analogue of the results
of Lov\'{a}z and Gyori~\cite{lovasz1977homology,gyori1976division} for the connected $k$-way partitioning
of $k$-connected graphs with respect to size (which corresponds to $p(i)=1$) extend to the
supply/demand case ($p(i)=\pm1$) for $k>3$? And is there a polynomial algorithm
that constructs such a partition?
Finally, extend the results of Section \ref{sec:blue_red} to graphs with more than two types of nodes,
that is, can we partition (under suitable conditions) a graph with several types of nodes to two (or more)
large connected subgraphs that preserve approximately the diversity (the proportions of the types) of the whole
population?

\newpage

\appendix
\section{Definitions and Background}\label{sec:pre}
In this section, we provide a short overview of the definitions and tools used in our work. Most of the graph theoretical terms used in this paper are relatively standard and borrowed from~\cite{bondy2008graph} and \cite{west2001introduction}. All the graphs in this paper are loopless.
\subsection{Terms from Graph Theory}
\noindent\textbf{Cutpoints and Subgraphs:} A \emph{cutpoint} of a connected graph $G$ is a node whose deletion results in a disconnected graph. 
Let $X$ and $Y$ be subsets of the nodes of a graph $G$. $G[X]$ denotes the subgraph of $G$ \emph{induced} by $X$. We denote by $E[X, Y]$ the set of edges of $G$ with one end in $X$ and the other end in $Y$. The neighborhood of a node $v$ is denoted $N(v)$.

\noindent\textbf{Connectivity:} The connectivity of a graph $G=(V,E)$ is the minimum size of a set $S\subset V$ such that $G\backslash S$ is not connected. A graph is $k$-connected if its connectivity is at least $k$.

\subsection{$st$-numbering of a Graph}\label{subsec:st-numbering}
Given any edge $\{s,t\}$ in a 2-connected graph $G$, an \emph{$st$-numbering} for $G$ is a numbering for the nodes in $G$ defined as follows~\cite{lempel1967algorithm}: the nodes of $G$ are numbered from 1 to $n$ so that $s$ receives number 1, node $t$ receives number $n$, and every node except $s$ and $t$ is adjacent both to a lower-numbered and to a higher-numbered node. It is shown in~\cite{even1976computing} that such a numbering can be found in $O(|V|+|E|)$.

\subsection{Series-Parallel Graphs}
A Graph $G$ is \emph{series-parallel}, with terminals $s$ and $t$, if it can be produced by a sequence of the following operations:
\begin{enumerate}
\item Create a new graph, consisting of a single edge between $s$ and $t$.
\item Given two series parallel graphs, $X$ and $Y$ with terminals $s_X,t_X$ and  $s_Y,t_Y$ respectively, form a new graph $G=P(X,Y)$ by identifying $s=s_X=s_Y$ and $t=t_X=t_Y$. This is known as the \emph{parallel composition} of $X$ and $Y$.
\item Given two series parallel graphs $X$ and $Y$, with terminals $s_X,t_X$ and $s_Y,t_Y$ respectively, form a new graph $G=S(X,Y)$ by identifying $s=s_X, t_X=s_Y$ and $t=t_Y$. This is known as the \emph{series composition} of $X$ and $Y$.
\end{enumerate}
It is easy to see that a series-parallel graph is 2-connected if, and only if, the last operation is a parallel composition.
\subsection{Nonseparating Induced Cycles and Ear Decomposition}\label{subsec:ear-decomposition}

Let $H$ be a subgraph of a graph $G$. An \emph{ear} of $H$ in $G$ is a nontrivial path in $G$ whose ends lie in $H$ but whose internal vertices do not. An ear decomposition of $G$ is a decomposition $G=P_0\cup\dots\cup P_k$ of the edges of $G$ such that $P_0$ is a cycle and $P_i$ for $i\geq 1$ is an ear of $P_0\cup P_1\cup\dots\cup P_{i-1}$ in $G$. It is known that every 2-connected graph has an ear decomposition (and vice-versa), and such a decomposition can be found in linear time.

A cycle $C$ is a \emph{nonseparating induced cycle} of $G$ if $G\backslash C$ is connected and $C$ has no chords. We say a cycle $C$ avoids a node $u$, if $u\notin C$.

\begin{theorem}[Tutte~\cite{tutte1963draw}] \label{th:tutte-3-connected}
Given a 3-connected graph $G(V,E)$ let $\{t,r\}$ be any edge of $G$ and let $u$ be any node of $G$, $r\neq u\neq t$. Then there is a nonseparating induced cycle of $G$ through $\{t,r\}$ and avoiding $u$.
\end{theorem}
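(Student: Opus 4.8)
The plan is to prove the theorem by an extremal argument on cycles combined with a rerouting (exchange) step, in the spirit of Tutte's theory of bridges. First I would reformulate the goal. For any cycle $C$ we will consider, $u\notin V(C)$, so $u$ always lies in some bridge of $C$, and requiring simultaneously that $G\setminus C$ be connected and that $C$ have no chord is the same as requiring $C$ to have exactly one bridge in $G$ (necessarily the bridge containing $u$). So it suffices to produce one cycle through the edge $\{t,r\}$, avoiding $u$, that has a single bridge. The family $\mathcal{C}$ of cycles through $\{t,r\}$ that avoid $u$ is nonempty: since $G$ is $3$-connected, $G-u$ is $2$-connected, hence contains a cycle through the edge $\{t,r\}$ (both of whose endpoints differ from $u$).

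Next I would select an extremal member of $\mathcal{C}$: among all $C\in\mathcal{C}$, take one for which the component $K$ of $G\setminus V(C)$ containing $u$ is as large as possible, breaking ties by making $|V(C)|$ as small as possible. I would then establish two properties of this $C$. (a) \emph{$C$ has no chord}: a chord $\{c,d\}$ splits $C$ into two arcs, exactly one of which carries the edge $\{t,r\}$; replacing the other arc by the chord yields a strictly shorter cycle $C'\in\mathcal{C}$ for which $K$ has not shrunk (we removed cycle vertices, so the $u$-component can only grow), contradicting the choice of $C$. (b) \emph{$G\setminus V(C)$ is connected}: otherwise there is a component $D\neq K$ of $G\setminus V(C)$, and crucially $u\notin D$, which is what makes the rerouting below legal. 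Since $G$ is $3$-connected and $N(D)$ separates $D$ from $u$, we have $|N(D)|\ge 3$, so the attachment vertices of $D$ split $C$ into at least three arcs; only one carries $\{t,r\}$, so there is an arc $A$ with endpoints $a,b\in N(D)$ whose removal leaves the edge $\{t,r\}$ intact. Taking an $a$--$b$ path $P$ with internal vertices in $D$ and setting $C':=(C\setminus(\text{internal vertices of }A))\cup P$ again gives a member of $\mathcal{C}$, and the vertices of $A$ freed from the cycle should merge with the $u$-side, making the $u$-component strictly larger than $K$ (or, in degenerate cases, keeping it the same size while $|V(C)|$ strictly decreases). Either way extremality is violated, so no such $D$ exists and $C$ is the required nonseparating induced cycle.

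The part I expect to be the real obstacle is making the exchange step in (b), and the tie-breaking in (a), genuinely produce a contradiction: one has to track how the bridges of $C$ split and coalesce when an arc is rerouted through $D$, and verify that the chosen extremal quantity strictly improves (or is preserved while $|V(C)|$ strictly drops). Concretely, one must choose which arc $A$ — and possibly which pair of attachments of $D$ — to reroute through, so that the internal vertices of $A$ together with the portion of $D$ left off the new cycle land in the same bridge as $u$; the cases where $D$ is a single vertex, or where all of $D$'s attachments other than $a,b$ happen to lie on the $\{t,r\}$-arc, need separate attention. The supporting facts — that $G-u$ is $2$-connected, that every bridge of a cycle containing an internal vertex has at least three attachments in a $3$-connected graph, and that a chord permits a shortcut — are routine; it is the bridge bookkeeping of the exchange step, which is also precisely where $3$-connectivity is used in full, that requires the care.
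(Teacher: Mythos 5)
First, note that the paper does not prove this statement at all: it is quoted verbatim from Tutte and used as a black box, so there is no in-paper proof to compare against. Your overall skeleton --- reformulating ``induced and nonseparating'' as ``exactly one bridge,'' taking a cycle through $\{t,r\}$ avoiding $u$ that lexicographically maximizes the $u$-component $K$ of $G\setminus V(C)$ and then minimizes $|V(C)|$, and killing chords by shortcutting --- is the classically correct line of attack, and your step (a) is sound: shortcutting across a chord can only enlarge the $u$-component while strictly shortening the cycle.

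The genuine gap is exactly where you suspect it, in step (b), and the justification you offer there is not merely incomplete but false as stated. After rerouting an arc $A$ through $D$, the $u$-component grows only if some newly freed vertex is adjacent to $K$; the vertices of $D$ are \emph{never} adjacent to $K$ (they lie in different components of $G\setminus V(C)$), so growth requires an internal vertex of $A$ to be an attachment of $K$. Consider the configuration in which every attachment of $K$ lies on the one segment of $C$ (determined by the attachments of $D$) that carries the edge $\{t,r\}$, or coincides with an attachment of $D$. Then no admissible choice of $A$ frees a neighbour of $K$, so the $u$-component stays exactly $K$; and your fallback --- that $|V(C)|$ strictly decreases --- also fails, since the detour through $D$ can be longer than the arc it replaces (nothing bounds $|P|$ by $|A|$). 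So neither horn of your dichotomy holds, and the extremal quantity you chose does not visibly yield a contradiction in this case. Closing this requires an additional idea --- e.g.\ a different or augmented extremal measure, or Tutte's full bookkeeping of overlapping bridges --- not just ``care'' in choosing $A$; as written, the heart of the proof is missing.
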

Notice that since $G$ is 3-connected in the previous theorem, every node in $C$ has a neighbor in $G\backslash C$. Cheriyan and Maheshwari showed that the cycle in Theorem~\ref{th:tutte-3-connected} can be found in $O(E)$~\cite{cheriyan1988finding}. Moreover, they showed that any 3-connected graph $G$ has a nonseparating ear decomposition $G=P_0\cup\dots\cup P_k$ defined as follows:  Let $V_i=V(P_0)\cup V(P_1)\dots \cup V(P_i)$, let $G_i=G[V_i]$ and $\overline{G}_i=G[V\backslash V_i]$. We say that $G=P_0\cup P_1\cup\dots\cup P_k$ is an ear decomposition through edge $\{t,r\}$ and avoiding vertex $u$ if the cycle $P_0$ contains edge $\{t,r\}$, and the last ear of length greater than one, say $P_m$, has $u$ as its only internal vertex. An ear decomposition $P_0\cup P_1\dots\cup P_k$ of graph $G$ through edge $\{t,r\}$ and avoiding vertex $u$ is a \emph{nonseparating ear decomposition} if for all $i$, $0\leq i<m$, graph $\overline{G}_i$ is connected and each internal vertex of ear $P_i$ has a neighbor in $\overline{G}_i$. 
\begin{theorem}[Cheriyan and Maheshwari~\cite{cheriyan1988finding}]
Given an edge $\{t,r\}$ and a vertex $u$ of a 3-connected graph $G$, a nonseparating induced cycle of $G$ through $\{t,r\}$ and avoiding $u$, and
a nonseparating ear decomposition can be found in time $O(|V|+|E|)$.
\end{theorem}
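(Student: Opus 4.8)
The plan is to produce the two required objects in two phases: first a nonseparating induced cycle $C$ through $\{t,r\}$ avoiding $u$, and then a nonseparating ear decomposition whose base cycle $P_0$ is exactly $C$ and whose last long ear has $u$ as its only internal vertex. Existence of $C$ is already guaranteed by Tutte's Theorem~\ref{th:tutte-3-connected}, so the real content is to make both constructions run in time $O(|V|+|E|)$ rather than to reprove existence. For the cycle, I would make Tutte's extremal argument constructive, using the fact that a nonseparating induced cycle is precisely a \emph{peripheral} (single-bridge) cycle: working in $G\setminus u$, which is $2$-connected, I would grow a cycle through $\{t,r\}$ and then eliminate bridges one at a time, arguing from $3$-connectivity that any chord or separating component can be absorbed by a local rerouting (otherwise one would exhibit a $2$-cut of $G$), until a single bridge remains, which gives chordlessness and connectedness of $G\setminus C$ simultaneously. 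The nontrivial algorithmic point is to locate this cycle without repeatedly testing connectivity; I would read off a candidate cycle from a single $st$-numbering/DFS pass of $G\setminus u$ (Section~\ref{subsec:st-numbering}) and then repair chords locally, charging each repair to a distinct edge so that the total work stays linear.

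With $C=P_0$ in hand, set $\overline{G}_0=G[V\setminus V(P_0)]$, which is connected because $C$ is nonseparating, and note $u\in\overline{G}_0$. I would root a spanning tree $T$ of $\overline{G}_0$ at $u$ and add the vertices of $\overline{G}_0$ as internal vertices of successive ears in a leaf-to-root order of $T$, i.e., each vertex is added before its tree-parent and $u$ is added last. This single ordering enforces both defining conditions of a nonseparating ear decomposition at once: at every stage the not-yet-added vertices form an ancestor-closed (hence connected) subtree of $T$ containing $u$, so $\overline{G}_i$ stays connected; and when a vertex is added its tree-parent is still unprocessed, so that vertex retains a neighbor in $\overline{G}_i$. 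Being the root, $u$ is the last internal vertex placed and becomes the unique internal vertex of the final long ear $P_m$, as required. To convert this vertex order into genuine ears I would bundle each newly added vertex, or each maximal path of consecutive newly added vertices, together with two edges back to already-placed vertices, closing it into a path-ear whose two endpoints lie in $G_{i-1}$.

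The main obstacle is exactly this ear-closing step. A single vertex $v$ can serve as the interior of a length-two ear only if it already has two placed neighbors, which is not automatic, so in general an ear must bundle several consecutive tree vertices, and one must prove that its two endpoints are already placed and that removing its interior keeps the complement connected. Here $3$-connectivity is essential: minimum degree at least $3$ and the absence of $2$-cuts guarantee enough back-edges from each frontier vertex into the placed part that a valid ear can always be completed (and, in the first phase, that $P_0$ can be made chordless). The remaining difficulty is the $O(|V|+|E|)$ bound: naively searching for ear endpoints or rechecking connectivity after each ear would be too slow, so I would maintain the frontier between placed and unplaced vertices incrementally, scan each adjacency list only a constant number of times in total, and charge the work of forming each ear to the edges it uses; combined with the linear-time cycle construction and linear-time spanning-tree and $st$-numbering routines, this yields the claimed overall linear running time. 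I expect the cycle-finding phase -- making Tutte's non-constructive extremal argument both correct and linear -- to be the hardest part to get right, with the data-structural bookkeeping for the ears a close second.
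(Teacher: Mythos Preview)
The paper does not prove this statement; it is quoted as a known result of Cheriyan and Maheshwari \cite{cheriyan1988finding} in the preliminaries (Section~\ref{subsec:ear-decomposition}) and is used as a black box elsewhere in the paper. There is therefore no proof in the paper to compare your proposal against.

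Since you asked for feedback on the proposal itself: the overall two-phase plan (peripheral cycle first, then ear decomposition ending at $u$) matches the shape of the Cheriyan--Maheshwari construction, and your observation that processing a spanning tree of $\overline{G}_0$ leaf-to-root keeps the unprocessed part connected is correct. However, the ear-closing argument is underspecified in a way that matters. You need every newly added ear to have \emph{both} endpoints in $G_{i-1}$, and the property ``each internal vertex of $P_i$ has a neighbor in $\overline{G}_i$'' is not automatically delivered by ``its tree-parent is still unprocessed'': when you bundle several consecutive tree vertices into one ear, the internal vertices in the middle may have their tree-parent inside the same ear, so you need a separate guarantee of an outside neighbor for each of them. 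Invoking ``minimum degree $\ge 3$'' is not enough, since all three neighbors of a vertex could already be placed. The actual Cheriyan--Maheshwari algorithm does not simply read ears off a spanning tree of $\overline{G}_0$; it uses a carefully chosen DFS structure and a notion of ``open'' ears with specific invariants to ensure both ear conditions simultaneously, and the linear time bound relies on that structure rather than on an ad hoc charging argument. Similarly, in Phase~1 your plan to ``repair chords locally, charging each repair to a distinct edge'' is the right instinct but is not yet an algorithm: a single repair can create new chords, so you would need a potential function (e.g., number of bridges or size of the complement component) that strictly decreases, and that is exactly where the work in \cite{cheriyan1988finding} lies.
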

\subsection{Partitioning of Graphs to Connected Subgraphs}
The following theorem is the main existing result in partitioning of graphs into connected subgraphs and is proved independently by Lov\'{a}z and Gyori~\cite{lovasz1977homology,gyori1976division} by different methods.
\begin{theorem}[Lov\'{a}z and Gyori~\cite{lovasz1977homology,gyori1976division}]
Let $G=(V,E)$ be a $k$-connected graph. Let $n=|V|,~ v_1,v_2,\dots,v_k\in V$ and let $n_1,n_2,\dots,n_k$ be positive integers satisfying $n_1+n_2+\dots+n_k=n$. Then, there exists a partition of $V$ into $(V_1,V_2\dots,V_k)$ satisfying $v_i\in V_i, |V_i|=n_i$, and $G[V_i]$ is connected for $i=1,2,\dots,k$.
\end{theorem}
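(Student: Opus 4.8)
The plan is to prove the theorem in the slightly more flexible \emph{packing} form, which is what actually carries an induction: if $G$ is $k$-connected, $v_1,\dots,v_k$ are distinct, and $n_1,\dots,n_k\ge 1$ satisfy $n_1+\dots+n_k\le n$, then there are pairwise disjoint vertex sets $V_1,\dots,V_k$ with $v_i\in V_i$, $|V_i|=n_i$, and each $G[V_i]$ connected. The statement as written is the case $\sum_i n_i=n$, where the $V_i$ are forced to partition $V$. I would induct on $\sum_i n_i$. The base case $\sum_i n_i=k$ is immediate: take $V_i=\{v_i\}$, which is legal since the $v_i$ are distinct and a single vertex induces a connected graph. (Note induction on $k$ by ``peeling off'' $V_k$ does not work cleanly, because deleting a prescribed-size connected chunk of a $k$-connected graph need not leave it $(k-1)$-connected.)

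For the inductive step, assume we have disjoint connected sets $V_i\ni v_i$ with $|V_i|=n_i$ and we wish to enlarge one of them, say $V_k$, by a single vertex; write $U=\bigcup_i V_i$, and note that before the final step $U\ne V$, so a ``free'' vertex exists. If some vertex outside $U$ has a neighbor in $V_k$, we simply add it to $V_k$ and the step is done. The hard case is when the frontier of $V_k$ in $V\setminus V_k$ lies entirely inside the other parts $V_1,\dots,V_{k-1}$. Here I would run an \emph{augmenting-chain} argument in the spirit of matching theory: $V_k$ tries to claim a frontier vertex $x_1$ from some $V_{i_1}$; this is permitted only if $V_{i_1}$ can repair the loss, which it does either by absorbing a free vertex (the chain terminates, and we push the changes back down the chain, each part staying connected and keeping its size) or by claiming a frontier vertex $x_2$ of another part $V_{i_2}$, and so on. To force such a chain to reach a free vertex rather than cycle, one invokes $k$-connectivity through a fan/Menger argument: from a suitable vertex of $V_k$ there are $k$ internally disjoint paths to a $k$-set consisting of $v_1,\dots,v_{k-1}$ and a chosen free vertex $z\notin U$, and these $k$ paths cannot all be simultaneously ``blocked'' by the parts without violating $\sum_i n_i\le n$; an unblocked one supplies the reroute.

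The step I expect to be the main obstacle is exactly this surrounded case — establishing that whenever $V_k$ is hemmed in by the other parts, a legal global exchange is always available. This is precisely what makes the Győri–Lovász theorem deep. The two classical routes are (i) Győri's intricate combinatorial induction, which tracks these exchanges by fixing a carefully chosen extremal configuration and deriving a contradiction from $k$-connectivity, and (ii) Lovász's homological proof, which encodes feasible partitions as the nonvanishing of a topological invariant of a complex built from $G$ and obtains existence non-constructively. Either suffices for us; I would follow route (i) as it is self-contained, while flagging that neither route yields a polynomial-time algorithm for $k>3$ — which is exactly why the constructive results in this paper are confined to $k\le 3$ and rely on the separate machinery ($st$-numbering, nonseparating ear decompositions, convex embeddings) developed above.
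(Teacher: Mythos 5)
The paper does not prove this theorem at all: it is stated in Appendix~A as a cited background result of Gy\H{o}ri and Lov\'asz, and the surrounding text explicitly notes that the known proofs are non-constructive and that no polynomial-time algorithm exists for $k>3$. So there is no in-paper proof to compare against, and your proposal has to stand on its own as a proof.

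It does not. The packing reformulation, the induction on $\sum_i n_i$, and the base case are fine, but the entire content of the theorem sits in the ``surrounded case,'' and there you name the difficulty instead of resolving it. Concretely: (1) the augmenting-chain scheme is not well-defined --- when $V_k$ claims a frontier vertex $x_1$ of $V_{i_1}$, deleting $x_1$ may disconnect $V_{i_1}$ or cut it off from $v_{i_1}$, and ``repairing the loss by absorbing a free vertex'' does not address which vertices of a part are safely removable; Gy\H{o}ri's proof needs an extremal configuration (his cascades, with a level function that is maximized) precisely to control this. (2) The Menger/fan step is asserted, not proved: $k$ internally disjoint paths from a vertex of $V_k$ to $\{v_1,\dots,v_{k-1},z\}$ do not directly yield a legal exchange, since an ``unblocked'' path may pass through several parts, and rerouting along it can change sizes by more than one and disconnect intermediate parts; the claimed contradiction with $\sum_i n_i\le n$ is never derived. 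Ending with ``either classical route suffices; I would follow route (i)'' is a citation, not a proof. Given that the paper itself only cites the result, a citation is in fact the right move --- but then the write-up should say so plainly rather than present an incomplete argument whose missing step is exactly the theorem's whole difficulty.
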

Although the existence of such a partition has long been proved, there is no polynomial-time algorithm to find such a partition for $k>3$. For $k=2$, it is easy to find such partition using $st$-numbering. For $k=3$, Wada and Kawaguchi~\cite{wada1994efficient} provided an $O(n^2)$ algorithm using the nonseparating ear decomposition of $3$-connected graph.
\subsection{Convex Embedding of Graphs}\label{subsec:embedding}
In this subsection, we provide a short overview of the beautiful work by Linial, Lov\'{a}z, and Wigderson~\cite{linial1988rubber} on convex embedding of the $k$-connected graphs.
Let $Q=\{q_1,q_2,\dots,q_m\}$ be a finite set of points in $\mathbb{R}^d$. The convex hull $\text{conv}(Q)$ of $Q$ is the set of all points $\sum_{i=1}^m \lambda_i q_i$ with $\sum_{i=1} \lambda_i=1$. The rank of $Q$ is defined by $\text{rank}(Q)=1+\text{dim}(\text{conv}(Q))$. $Q$ is in general position if $\text{rank}(S)=d+1$ for every $(d+1)$-subset $S\subseteq Q$.
Let $G$ be a graph and $X\subset V$. A convex $X$-embedding of $G$ is any mapping $f:V\rightarrow \mathbb{R}^{|X|-1}$ such that for each $v\in V\backslash X$, $f(v)\in \mathrm{conv}(f(N(v)))$. We say that the convex embedding is in general position if the set $f(V)$ of the points is in general position.

\begin{theorem}[Linial, Lov\'{a}z, and Wigderson~\cite{linial1988rubber}]
Let $G$ be a graph on $n$ vertices and $1<k<n$. Then the following two conditions are equivalent:
\begin{enumerate}
\item $G$ is $k$-connected
\item For every $X\subset V$ with $|X|=k$, $G$ has a convex $X$-embedding in general position.
\end{enumerate}
\end{theorem}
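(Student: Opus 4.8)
The plan is to establish the two implications separately: from convex embeddings to $k$-connectivity via a linear-functional ``trapping'' argument, and from $k$-connectivity to convex embeddings via the rubber-band (harmonic) construction followed by a genericity argument for general position. For the first direction I would argue the contrapositive. Suppose $G$ has a vertex cut $S$ with $|S|\le k-1$, so $V\setminus S$ splits into nonempty parts $A,B$ with no $A$--$B$ edges. First pad $S$ to a set $S'$ of \emph{exactly} $k-1$ vertices whose removal still disconnects $G$ --- possible because $k\le n-1$, so the $n-(k-1)$ vertices outside $S'$ can be chosen to include a vertex from each of two components of $G\setminus S$. Pick $X=S'\cup\{a\}$ with $a$ on the $A$-side, so $|X|=k$ and $B\cap X=\emptyset$. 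In any convex $X$-embedding $f$, every $v\in B$ has $N(v)\subseteq S'\cup B$. The key claim is $f(B)\subseteq\mathrm{conv}(f(S'))$: otherwise a generic affine functional $\ell$ has a unique maximizer $v^*$ over $f(S'\cup B)$, necessarily in $B$; but $f(v^*)$ is a convex combination of $f(N(v^*))$ with $N(v^*)\subseteq S'\cup B$, all of strictly smaller $\ell$-value, a contradiction. Since $|S'|=k-1$, the set $f(S')$ lies in an affine flat of dimension $\le k-2$; adjoining any $f(v)$ with $v\in B$ gives $k$ points in that flat, hence affinely dependent, contradicting general position.

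For the converse, place the $k$ vertices of $X$ at the vertices of a nondegenerate simplex in $\mathbb{R}^{k-1}$, assign a positive weight $c_e$ to each edge, and let $f$ be the harmonic extension, i.e., each free vertex sits at the $c$-weighted centroid of its neighbors. Connectedness of $G$ together with $X\ne\emptyset$ makes the grounded weighted Laplacian positive definite, so $f$ exists uniquely, and by construction $f(v)$ is a convex combination of $f(N(v))$ with positive coefficients; hence $f$ is a convex $X$-embedding for \emph{every} positive weighting. It remains to choose the weights to get general position. By Cramer's rule each coordinate of $f$ is a rational function of the $c_e$ with denominator nonvanishing on the positive orthant, so for each $k$-subset $\{v_1,\dots,v_k\}$ the condition ``$f(v_1),\dots,f(v_k)$ are affinely dependent'' amounts to the vanishing of a single polynomial in the $c_e$; if this polynomial is not identically zero, it vanishes only on a measure-zero set, and intersecting the finitely many good sets (one per $k$-subset) leaves a full-measure set of admissible weightings.

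I expect the main obstacle to be the one place where $k$-connectivity is genuinely used: showing that for each $k$-subset $\{v_1,\dots,v_k\}$ there is \emph{some} weighting making those points affinely independent. Here I would invoke the path version of Menger's theorem: in a $k$-connected graph the $k$-sets $\{v_1,\dots,v_k\}$ and $X$ are joined by $k$ vertex-disjoint paths, inducing a bijection $v_i\leftrightarrow x_{\pi(i)}$ (a vertex lying in both sets being linked to itself by a trivial path). Give every edge on these paths a common large weight $\lambda$ and every other edge weight $1$. The test configuration that collapses each path onto its endpoint $f(x_{\pi(i)})$ has Dirichlet energy $\sum_e c_e\|f(u)-f(v)\|^2=O(1)$ independent of $\lambda$, since only the bounded-length external edges contribute; hence the energy-minimizing $f$ also has energy $O(1)$, so every path edge is stretched by $O(1/\sqrt{\lambda})$ and, summing along a path, $\|f(v_i)-f(x_{\pi(i)})\|=O(1/\sqrt{\lambda})$. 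For $\lambda$ large, $f(v_1),\dots,f(v_k)$ are then close to a permutation of the affinely independent simplex vertices, hence themselves affinely independent, so the polynomial above is not identically zero. I anticipate the fussy points to be the precise linkage statement when $\{v_i\}$ and $X$ overlap, and making the ``rigid path'' limit rigorous through the energy functional rather than informal reasoning about springs.
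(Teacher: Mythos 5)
This theorem is imported from Linial--Lov\'asz--Wigderson; the paper does not prove it but only summarizes the forward direction via the weighted rubber-band (harmonic) embedding whose elasticity coefficients are then perturbed generically, which is exactly the construction and genericity argument you give, and your converse direction (separating a putative cut set $S'$ of size $k-1$ by a linear functional to trap $f(B)$ inside $\mathrm{conv}(f(S'))$ and contradict general position) is the standard argument from the same source. Your proof is correct; the only steps worth making explicit are that general position already forces all embedded points to be distinct (which is what makes the ``unique maximizer'' step legitimate) and that the $k$ disjoint Menger paths should be truncated at their first intersection with $X$ so that each one ends at a distinct simplex vertex.
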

Notice that the special case of the Theorem for $k=2$ asserts the existence of an $st$-numbering of a 2-connected graph. The proof of this theorem is inspired by physics. The embedding is found by letting the edges of the graph behave like ideal springs and letting its vertices settle. A formal summary of the proof ($1\rightarrow 2$) is as follows (for more details see~\cite{linial1988rubber}). For each $v_i\in X$, define $f(v_i)$ arbitrary in $\mathbb{R}^{k-1}$ such that $f(X)$ is in general position. Assign to every edge $(u,v)\in E$ a positive elasticity coefficient $c_{uv}$ and let $c\in \mathbb{R}^{|E|}$ be the vector of coefficients. It is proved in~\cite{linial1988rubber} that for almost any coefficient vector $c$, an embedding $f$ that minimizes  the potential function $P=\sum_{\{u,v\}\in E} c_{uv}\|f(u)-f(v)\|^2$ provides a convex $X$-embedding in general position ($\|.\|$ is the Euclidean norm). Moreover, the embedding that minimizes $P$ can be computed as follows,
\begin{equation*}
f(v)=\frac{1}{c_v}\sum_{u\in N(v)} c_{uv}f(u)~\text{for all}~v\in V\backslash X,
\end{equation*}
in which $c_v=\sum_{u\in N(v)} c_{uv}$. Hence, the embedding can be found by solving a set of linear equations in at most $O(|V|^3)$ time (or matrix multiplication time).
\section{Missing Proofs from Section~\ref{sec:BCPI}}\label{sec:proof1}
\begin{proof}[Proof of Proposition~\ref{lem:BCPI_hard}]
We use the proof of \cite[Theorem 2]{chataigner2007approximation} with a modest change. The reduction is from the X3C problem~\cite{papadimitriou1982complexity}, which is a variant of the \emph{Exact Cover by 3-sets} and defined as follows: Given a set $X$ with $|X|=3q$ and a family $C$ of 3-element subsets of $X$ such that $|C|=3q$ and each element of $X$ appears in exactly 3 sets of $C$, decide whether $C$ contains an exact cover for $X$.
Given an instance $(X,C)$ of $X3C$, let $G=(V,E)$ be the graph with the vertex set $V=X\cup C\cup\{a,b\}$ and edge set $E=\bigcup_{j=1}^{3q}[\{C_jx_i|x_i\in C_j\}\cup\{C_ja\}\cup\{C_jb\}]$. Set $p(a)=2q$, $p(b)=9q^2+q$, $p(C_j)=-1$, and $p(x_i)=-3q$. It is easy to verify that $C$ contains an exact cover for $X$ if and only if the BCPI problem has a solution such that $p(V_1)=p(V_2)=0$. This shows the first claim.

For the second claim, attach to nodes $a$, $b$, and the $x_i$s, paths of length $2q$, $9q^2+q$, and $3q$, respectively, and set the supply/demand values of $a$, $b$,  the $x_i$'s and the new nodes equal to $+1$ (for the paths for $a$ and $b$) or $-1$ (for the $x_i$'s). 
\end{proof}

\medskip

\begin{proof}[Proof of Proposition~\ref{lem:2_poly}]
Clearly, part (2) follows immediately from part (1) because in this case, $p(V_1), p(V_2)$
are integer and $\max_{j\in V}|p(j)|/2 =1/2$.
To show part (1), pick two arbitrary nodes $u,v\in V$ with $p(u) p(v)>0$. Since we want to separate $u$ and $v$, we can assume wlog that initially $\{u,v\}\in G$. Since $G$ is 2-connected, an $st$-numbering between nodes $u$ and $v$ as $u=v_1,v_2,\dots,v_n=v$ can be found in $O(|V|+|E|)$~\cite{even1976computing}. Define $V_1^{(i)}:=\{v_1,v_2,\dots,v_i\}$. It is easy to see that $p(V_1^{(1)})=p(u)>0$ and $p(V_1^{(n-1)})=-p(v)<0$. Hence, there must exist an index $1\leq i^*<n$ such that $|p(V_1^{(i^*)})|>0$ and $|p(V_1^{(i^*+1)})|\leq 0$. Since $|p(V_1^{(i)})-p(V_1^{(i+1)})|= |p(i^*+1)|$, either $|p(V_1^{(i^*)})|\leq |p(i^*+1)|/2$ or $|p(V_1^{(i^*+1)})|\leq |p(i^*+1)|/2$; Accordingly set $V_1=V_1^{(i^*)}$ or $V_1=V_1^{(i^*+1)}$. Let $V_2=V\backslash V_1$.  Hence, $(V_1,V_2)$ is a solution with $|p(V_1)|=|p(V_2)|\leq |p(i^*+1)/2|\leq \max_{j\in V}|p(j)|/2$. It is easy to see that $i^*$ can be found in $O(|V|)$.
\end{proof}

\medskip

\begin{proof}[Proof of Proposition~\ref{lem:3-connected-zero-p}]
Consider the case of general function $p$, and let $p_{\max} = \max_{j \in V} |p(j)|$.
We will show that we can find a solution such that $u \in V_1, v \in V_2, w \in V_3$ with $|p(V_1)|, |p(V_2)| \leq p_{\max}/2$.
Since $|p(V_3)| = |p(V_1) + p(V_2)|$  (recall $p(V)=0$),
this implies that $|p(V_3)| \leq p_{\max}$, and hence
$|p(V_1)|+ |p(V_2)| +|p(V_3)| \leq 2p_{\max}$.
Furthermore, if $ p(i) = \pm 1$ for all $i \in V$, hence $p_{\max}=1$,
then $|p(V_1)|, |p(V_2)| \leq p_{\max}/2$ implies that 
$p(V_1) =p(V_2)=0$, and therefore also $p(V_3) = 0$.
Thus, both claims will follow.

Assume $u,v,w\in V$ and $p(u), p(v),p(w)>0$ (the case of negative $p(u), p(v),p(w)$
is symmetric). 
Since we want to separate $u$ from $v$, we can assume without loss of generality that $\{u,v\}\in E$. Using~\cite{cheriyan1988finding}, there is a non-separating ear decomposition through the edge $\{u, v\}$ and avoiding node $w$.  
Ignore the ears that do not contain any internal nodes, and let $Q_0\cup Q_1\cup\dots\cup Q_r$ be the decomposition consisting of the ears with nodes; we have $w\in Q_r$. Let $V_i=V(Q_0)\cup V(Q_1)\dots \cup V(Q_i)$, let $G_i=G[V_i]$ and $\overline{G}_i=G[V\backslash V_i]$. We distinguish two cases, depending on whether $p(V_0)\leq0$
or $p(V_0)>0$.
\begin{itemize}
\item[(i)] If $p(V_0)\leq0$, then consider an $st$-numbering between $u$ and $v$ in $V_0$, say $u=v_1,v_2,\dots,v_s=v$. Define $V_0^{(i)}=\{v_1,v_2,\dots,v_i\}$. Since $p(u),p(v)>0$ and $p(V_0)\leq0$, there must exist indices $1\leq i^*\leq j^*<s$ such that $p(V_0^{(i^*)})>0,~p(V_0^{(i^*+1)})\leq0$ and $p(V_0\backslash V_0^{(j^*+1)})>0,~p(V_0\backslash V_0^{(j^*)})\leq0$.
    \begin{itemize}
    \item[(a)] If $i^*=j^*$, since $p(V_0^{(i^*)})+ p(v_{i^*+1})+p(V_0\backslash V_0^{(i^*+1)})=p(V_0)<0$, we have $ p(V_0^{(i^*)})+p(V_0\backslash V_0^{(i^*+1)})\leq  |p(v_{i^*+1})|$.
    Now, one of the following three cases happens:
    \begin{itemize}
    \item[-] If $p(V_0^{(i^*)})\leq |p(v_{i^*+1})|/2$ and $p(V_0\backslash V_0^{(i^*+1)})\leq|p(v_{i^*+1})|/2$, then it is easy to see that $V_1=V_0^{(i^*)}$, $V_2=V_0\backslash V_0^{(i^*+1)}$, and $V_3=V\backslash (V_1\cup V_2)$ is a good partition.
    \item[-] If $p(V_0^{(i^*)})> |p(v_{i^*+1})|/2$ and $p(V_0\backslash V_0^{(i^*+1)})\leq|p(v_{i^*+1})|/2$, then $p(V_0^{(i^*)})+p(v_{i^*+1})=p(V_0^{(i^*+1)})\leq |p(v_{i^*+1})|/2$. Hence, $V_1=V_0^{(i^*+1)}$, $V_2=V_0\backslash V_0^{(i^*+1)}$, and $V_3=V\backslash V_0$ is a good partition.
    \item[-] If $p(V_0^{(i^*)})\leq |p(v_{i^*+1})|/2$ and $p(V_0\backslash V_0^{(i^*+1)})>|p(v_{i^*+1})|/2$, then $p(V_0\backslash V_0^{(i^*+1)})+p(v_{i^*+1})= p(V_0\backslash V_0^{(i^*)})\leq|p(v_{i^*+1})|/2$. Hence, $V_1=V_0^{(i^*)}$, $V_2=V_0\backslash V_0^{(i^*)}$, and $V_3=V\backslash V_0$ is a good partition.
    \end{itemize}
    \item[(b)] If $i^*<j^*$, then either $p(V_0^{(i^*)})\leq|p(v_{i^*+1})|/2$ or $|p(V_0^{(i^*+1)})|\leq|p(v_{i^*+1})|/2$, accordingly set $V_1=V_0^{(i^*)}$ or $V_1=V_0^{(i^*+1)}$. Similarly, either $p(V_0\backslash V_0^{(j^*+1)})\leq |p(v_{j^*+1})|/2$ or $|p(V_0\backslash V_0^{(j^*)})|\leq |p(v_{j^*+1})|/2$, so accordingly set $V_2=V_0\backslash V_0^{(j^*+1)}$ or $V_2=V_0\backslash V_0^{(j^*)}$. Set $V_3=V\backslash (V_1\cup V_2)$. It is easy to check that $(V_1,V_2,V_3)$ is a good partition.
    \end{itemize}
\item[(ii)] If $p(V_0)>0$, then since $p(w)>0$ and therefore $p(V_{r-1})<0$, there must exist an index $0\leq j<r-1$ such that $p(V_j)>0$ and $p(V_{j+1})\leq0$. Consider an $st$-numbering between $u$ and $v$ in $G[V_j]$ as $u=v_1,v_2,\dots,v_s=v$ and define $V_j^{(i)}=\{v_1,v_2,\dots,v_i\}$. The 
ear $Q_{j+1}$ is a path of new nodes $q_1,q_2,\dots, q_t$
attached to two (distinct) nodes $v_x, v_y$ of $G[V_j]$ through
edges $\{v_x,q_1\},\{q_t,v_y\}\in E$; assume wlog that
$1\leq x<y\leq s$.
For simplicity, we will use below $Q_{j+1}$ to denote also the
set $\{q_1,q_2,\dots, q_t\}$ of internal (new) nodes of the ear.
Also define $Q_{j+1}^{(i)}=\{q_1,q_2,\dots,q_i\}$ and $Q_{j+1}^{(0)}=\emptyset$. 
One of the following cases must happen:
    \begin{itemize}
    \item[(a)] Suppose there is an index $1\leq i^*<(y-1)$ such that $p(V_j^{(i^*)})>0$ and $p(V_j^{(i^*+1)})\leq0$ or there is an index $x+1<i^*< s$ such that $p(V_j\backslash V_j^{(i^*-1)})>0$ and $p(V_j\backslash V_j^{(i^*)})\leq0$. 
         Let's assume there is an index $1\leq i^*< (y-1)$, such that $p(V_j^{(i^*)})>0$ and $p(V_j^{(i^*+1)})\leq0$ (the other case is exactly similar). Then either $p(V_j^{(i^*)})\leq|p(v_{i^*+1})|/2$ or $|p(V_j^{(i^*+1)})|\leq|p(v_{i^*+1})|/2$, accordingly set either $V_1=V_j^{(i^*)}$ or $V_1=V_j^{(i^*+1)}$. Set $V_2'=V_j\backslash V_1$. One of the following cases happens:
        \begin{itemize}
        \item[-] If $V_1=V_j^{(i^*)}$ and $p(V_2')\leq 0$, then since $p(V_j^{(i^*+1)})\leq0$, we have $p(V_j\backslash V_j^{(i^*+1)})>0$. Hence, $p(V_2'\backslash \{v_{i^*+1}\})>0$. So, it is either $|p(V_2')|\leq |p(v_{i^*+1})|/2$ or $p(V_2'\backslash \{v_{i^*+1}\})\leq |p(v_{i^*+1})|/2$. Now if $p(V_2'\backslash \{v_{i^*+1}\})\leq |p(v_{i^*+1})|/2$, since also $p(V_1)\leq |p(v_{i^*+1})|/2$, $p(V_j)\leq 0$ which contradicts with the assumption. Therefore, $|p(V_2')|\leq |p(v_{i^*+1})|/2$. Set $V_2=V_2'$ and $V_3=V\backslash (V_1\cup V_2)$. It is easy to check that $(V_1,V_2,V_3)$ is a good partition.
        \item[-] If $V_1=V_j^{(i^*)}$ and $p(V_2')> 0$, then since $p(V_j\cup Q_{j+1})<0$, there is an index $0< t^*\leq t$, such that $p(V_2'\cup (Q_{j+1}\backslash Q_{j+1}^{(t^*)}))>0$ and $p(V_2'\cup (Q_{j+1}\backslash Q_{j+1}^{(t^*-1)}))\leq 0$. Hence, either $p(V_2'\cup (Q_{j+1}\backslash Q_{j+1}^{(t^*)}))\leq |p(q_{t^*})|/2$ or $|p(V_2'\cup (Q_{j+1}\backslash Q_{j+1}^{(t^*-1)}))|\leq |p(q_{t^*})|/2$, accordingly set $V_2=V_2'\cup (Q_{j+1}\backslash Q_{j+1}^{(t^*)})$ or $V_2=V_2'\cup (Q_{j+1}\backslash Q_{j+1}^{(t^*-1)})$. Set $V_3=V\backslash (V_1\cup V_2)$. It is easy to see that $(V_1,V_2,V_3)$ is a good partition.
        \item[-] If $V_1=V_j^{(i^*+1)}$, then since $p(V_1)\leq 0$, we have $p(V_2')>0$. The rest is exactly like the previous case when $V_1=V_j^{(i^*)}$ and $p(V_2')> 0$.

        \end{itemize}
    \item[(b)] Suppose that for every $1\leq i<y$, $p(V_j^{(i)})>0$ and for every $x<i< s$, $p(V_j\backslash V_j^{(i)})>0$. Set $V_1'=V_j^{(y-1)}$ and $V_2'=V_j\backslash V_1'$. Based on the assumption $p(V_1'),p(V_2')>0$. Since $p(V_{j+1})\leq 0$, there are indices $0\leq i^*\leq j^*<t$ such that $p(V_1'\cup Q_{j+1}^{(i^*)})>0$, $p(V_1'\cup Q_{j+1}^{(i^*+1)})\leq0$ and $p(V_2'\cup (Q_{j+1}\backslash Q_{j+1}^{(j^*+1)}))>0$, $p(V_2'\cup (Q_{j+1}\backslash Q_{j+1}^{(j^*)}))\leq0$. The rest of the proof is similar to case (i) when $p(V_0)\leq 0$.
    \end{itemize}

\end{itemize}
\end{proof} 
\section{Missing Proofs from Section~\ref{sec:BPGI}}\label{sec:proof2}
\begin{proof}[Proof of Lemma~\ref{lem:embedding}]
Set $X=\{v,u,w\}$. Using~\cite{linial1988rubber}, $G$ has a convex $X$-embedding in $\mathbb{R}^2$ in general position with mapping $f:V\rightarrow \mathbb{R}^2$ such that $f(u)=(0,0)$, $f(v)=(1,0)$, and $f(w)=(0,1)$.  In the $X$-embedding of the nodes, we have a freedom to set the elasticity coefficient vector $\vec{c}$ to anything that we want (except a measure zero set of vectors). So for any edge $\{i,j\}\in G[V_1']\cup G[V_2']$, set $c_{ij}=g$; and for any $\{i,j\} \in E[V_1',V_2']$, set $c_{ij}=1$.
Assume $\mathcal{L}_1$ is the line $y=0.5$, $\mathcal{L}_2$ is the line $x+2y=1$, and $\mathcal{L}_3$ is the line $x=y$.

First, we show that there exist a $g$ for which all the nodes in $V_1'$ will be embedded above the line $\mathcal{L}_1$. To show this, from~\cite{linial1988rubber}, we know the embedding is such that it minimizes the total potential $P(f,\vec{c})= \sum_{\{i,j\}\in E} c_{ij}\|f(i)-f(j)\|^2$. Notice that we can independently minimize $P$ on $x$-axis values and $y$-axis values as below:
\begin{eqnarray*}
\min_{f} P &=& \min_{f_1} P_x +\min_{f_2} P_y\\
&=&\min_{f_1} \sum_{\{i,j\}\in E} c_{ij} (f_1(i)-f_1(j))^2+ \min_{f_2} \sum_{\{i,j\}\in E} c_{ij} (f_2(i)-f_2(j))^2
\end{eqnarray*}

Now, notice that if we place all the nodes in $V_1'$ at point (0,1) and all the nodes in $V_2'$ on the line $uv$, then $P_y\leq |E|$. Hence, if $f_2$ minimizes $P_y$, then $P_y(f_2,c)\leq |E|$. Set $g \geq 4|V|^2 |E|$. We show that if $f_2$ minimizes $P_y$, then for all edges $\{i,j\}\in G[V_1']\cup G[V_2']$, $(f_2(i)-f_2(j))^2\leq1/(4|V|^2)$. By contradiction, assume there is an edge $\{i,j\}\in G[V_1']\cup G[V_2']$ such that $(f_2(i)-f_2(j))^2>1/(4|V|^2)$. Then, $c_{ij}(f_2(i)-f_2(j))^2=g (f_2(i)-f_2(j))^2 >|E|$. Hence, $P_y(f_2,c)> |E|$ which contradicts with the fact the $f_2$ minimizes $P_y$. Therefore, if $g\geq4|V|^2 |E|$, then for all $\{i,j\}\in G[V_1']\cup G[V_2']$, $|f_2(i)-f_2(j)|\leq1/(2|V|)$. Now, since $G[V_1']$ is connected, all the nodes in $V_1'$ are connected to $w$ with a path of length (in number of hops) less than $|V|-1$. Hence, using the triangle inequality, for all $i\in V_1'$:
\begin{equation*}
|f_2(w)-f_2(i)|\leq (|V|-1)/(2|V|)<1/2\Rightarrow |1-f_2(i)|< 1/2,
\end{equation*}
which means that all the nodes in $V_1'$ are above $\mathcal{L}_1$.

With the very same argument, if $g\geq t^2 |V|^2 |E|$, then for all $i\in V_2'$, $f_2(i)<1/t$.

Now, we want to prove that there is a $g$ such that all the nodes in $V_2'$ will be embedded below the lines $\mathcal{L}_2$ and $\mathcal{L}_3$. Define $n_1(i):=|N(i)\cap V_1'|$ and $n_2(i):=|N(i)\cap V_2'|$. From~\cite{linial1988rubber}, we know the embedding is such that for all $i\in V\backslash\{u,v,w\}$, $f(i)=1/c_i\sum_{j\in N(i)}c_{ij}f(j)$, where $c_j=\sum_{j\in N(i)}c_{ij}f(j)$. Since $G[V'_2]$ is connected, for any $i\in V_2'$ there is a path $i=v_1,v_2,\dots,v_r=v$ in $V_2'$. Using this ordering:
\begin{eqnarray*}
&&\begin{cases}
f_1(v_j)\geq \frac{1}{n_2(v_j)g+n_1(v_j)}g f_1(v_{j+1})\geq (1/|V|)f_1(v_{j+1}),& \forall j\in \{1,\dots,r-1\}\\
f_1(v_r)=f_1(v)=1
\end{cases}
\\&&\Rightarrow \forall i\in V_2'\backslash\{u,v\}:~ f_1(i)\geq (1/|V|)^{r}\geq(1/|V|)^{|V|}.
\end{eqnarray*}
On the other hand, from the previous part, if we set $g\geq |V|^{2|V|+2}|E|$, then for all $i\in V_2'$, $f_2(i)\leq (1/|V|)^{|V|}$. Hence, for all $i\in V_2'$, $f_2(i)\leq f_1(i)$, which means that all the nodes in $V_2'$ will be placed below the line $\mathcal{L}_3$.

With the very same idea, we show that there exist a $g$ for which all the nodes in $V_2'$ will be placed below the line $\mathcal{L}_2$. Since $G[V'_2]$ is connected, for any $i\in V_2'$ there is a path $u=u_1,v_2,\dots,u_t=i$ in $V_2'$. Notice that for all $i\in V\backslash\{u,v,w\}$, $1-f_1(i)=1/c_i\sum_{j\in N(i)}c_{ij}(1-f_1(j))$. Hence, since $\forall j\in V: f_1(j)\leq 1$, we have,
\begin{eqnarray*}
&&\begin{cases}
1-f_1(u_j)\geq \frac{1}{n_2(u_j)g+n_1(u_j)}g (1-f_1(u_{i-1}))\geq (1/|V|)(1-f_1(u_{i-1})),& \forall j\in \{2,\dots,t\}\\
1-f_1(u)=1-f_1(u_1)=1
\end{cases}
\\&& \Rightarrow \forall i\in V_2'\backslash\{u,v\}:~1-f_1(i)\geq (1/|V|)^{t}\geq (1/|V|)^{|V|}.
\end{eqnarray*}
From the previous part, if we set $g\geq 4 |V|^{2|V|+2}|E|$, then for all $i\in V_2'$, $f_2(i)\leq 1/2(1/|V|)^{|V|}$. Hence, for $i\in V_2'$, $f_1(i)+2f_2(i)\leq 1$, which means that all the nodes in $V_2'$ will be placed below the line $\mathcal{L}_3$. Therefore, if we set $g\geq 4|V|^{2|V|+2}|E|$, then we will get an embedding as depicted in Fig.~\ref{fig:3-connected}.
Note that a polynomial number of bits suffices for $g$.

Notice that if $\vec{c}$ is a ``good" vector, then so is $\vec{c}+\vec{\epsilon}$ in which $\vec{\epsilon}$ is a vector with very small Euclidean norm. Hence, we can always find a ``good" vector $\vec{c}$ which result in a $X$-embedding in general position.
\end{proof}

\medskip

\begin{proof}[Proof of Theorem~\ref{th:3-connected}]
Assume that $|V|\equiv0(\mathrm{mod}~4)$; the proof for the case $|V|\equiv2(\mathrm{mod}~4)$ is similar. Using Lemma~\ref{lem:cut}, we can find $\{u,v,w\}\in V$ and a partition $(V_1',V_2')$ of $V$ with properties described in the Lemma. Set $X=\{u,v,w\}$. Using Lemma~\ref{lem:embedding},  we can find a convex $X$-embedding of $G$  in general position with properties described in the Lemma as depicted in Fig.~\ref{fig:3-connected}.
The rest of the proof is very similar to the proof of Lemma~\ref{lem:3-connected-tri}. We consider again a circle $\mathcal{C}$ around $f(u),f(v),f(w)$ in $\mathbb{R}^2$ as shown in Fig.~\ref{fig:3-connected}. Also consider a directed line $\mathcal{L}$ tangent to the circle $C$ at point $A$. If we project the nodes of $G$ onto the line $\mathcal{L}$, this time the order of the nodes projection gives an $st$-numbering between the first and the last node only if $u$ and $v$ are the first and last node. However, if we set $V_1$ to be the $|V|/2$ nodes whose projections come first and $V_2$ are the $|V|/2$ nodes whose projections come last, then $G[V_1]$ and $G[V_2]$ are both connected even when $u$ and $v$ are not the first and last nodes. The reason lies on the special embedding that we considered here. Assume for example $w$ and $v$ are the first and the last projected nodes, and $V_1$ and $V_2$ are set of the $|V|/2$ nodes which projections come first and last, respectively. Two cases might happen:
\begin{itemize}
\item[(i)] If $u,w\in V_1$ and $v\in V_2$, then since $\{u,w\}\in E$, both $G[V_1]$ and $G[V_2]$ are connected because of the properties of the embedding.
\item[(ii)] If $w\in V_1$ and $u,v\in V_2$, since $|V_2'|\leq |V|/2$ and $|V_2|=|V|/2$, then either $V_2=V_2'$ or $V_2 \cap  V_1' \neq \emptyset$.
If $V_2= V_2'$, and hence $V_1=V_1'$ then there is nothing to prove.
So assume there is a node $z\in V_2 \cap V_1'$.
From the properties of the embedding, the triangle $\{z,u,v\}$
contains all the nodes of $V_2'$.
Since $\{z,u,v\}\in V_2$, and $V_2$ contains all the nodes that are on a same side of a halfplane, we should also have $V_2'\subset V_2$. Now, from the properties of the embedding, it is easy to see that every node in $V_2$ has a path either to $u$ or $v$. Since $V_2'\subset V_2$, there is also a path between $u$ and $v$. Thus, $G[V_2]$ is connected. From the properties of the embedding, $G[V_1]$ is connected as before.
\end{itemize}

The rest of the proof is exactly the same as the proof of Lemma~\ref{lem:3-connected-tri}. We move $\mathcal{L}$ from being tangent at point $A$ to point $B$ ($AB$ is a diameter of the circle $\mathcal{C}$) and consider the resulting partition. Notice that if at point $A$, $p(V_1)>0$, then at point $B$ since $V_1$ and $V_2$ completely switch places compared to the partition at point $A$,  $p(V_1)<0$. Hence, as we move $\mathcal{L}$ from being tangent at point $A$ to point $B$ and keep it tangent to the circle, in the resulted partitions, $p(V_1)$ goes from some positive value to a negative value. Notice that the partition $(V_1,V_2)$ changes only if $\mathcal{L}$ passes a point $D$ on the circle such that at $D$, $\mathcal{L}$ is perpendicular to a line that connects $f(i)$ to $f(j)$ for $i,j\in V$.  Now, since the embedding is in general position, there are exactly two points on every line that connects two points $f(i)$ and $f(j)$, so $V_1$ changes at most by one node leaving $V_1$ and one node entering  $V_1$. Hence, $p(V_1)$ changes by either $\pm 2$ or $0$ value at each change. Now, since $|V|\equiv 0 (\mathrm{mod}~ 4)$, $p(V_1)$ has an even value in all the resulting partitions. Therefore, as we move $\mathcal{L}$ from being tangent at point $A$ to point $B$, there should be a point $D$ such that in the resulted partition $p(V_1)=p(V_2)=0$.

\end{proof}

\medskip

\begin{proof}[Proof of Corollary~\ref{cor:serpar}]
Every series-parallel graph $G$ has a separation pair $\{u,v\}$
such that every connected component of $G[V\backslash \{u,v\}]$ has less that $2|V|/3$ nodes,
and furthermore, such a separation  pair can be found in linear time.
To see this, consider the derivation tree $T$ of the construction of $G$.
The root of $T$ corresponds to $G$, the leaves correspond to the edges, and every internal node
$i$ corresponds to a subgraph $G_i=(V_i,E_i)$ that is the series or parallel composition of the
subgraphs corresponding to its children.
Starting at the root of $T$, walk down the tree following always the edge to the child corresponding to
a subgraph with the maximum number of nodes until the number of nodes becomes $\leq 2|V|/3$.
Thus, we arrive at a node $i$ of the tree such that $|V_i| > 2|V|/3$
and $|V_j| \leq 2|V|/3$ for all children $j$ of $i$.
Let $u_i, v_i$ be the terminals of $G_i$. Note that $u_i, v_i$ separate all
the nodes of $G_i$ from all the nodes that are not in $G_i$.
Since $|V_i| > 2|V|/3$, we have $|V \backslash V_i| < |V|/3$.
If $G_i$ is the parallel composition of the graphs corresponding to the children of $i$,
then the separation pair $\{u_i, v_i\}$ has the desired property,
i.e. all the components of $G[V\backslash \{u,v\}]$ have less than $2|V|/3$ nodes.

Suppose $G_i$ is the series composition of the graphs $G_j$, $G_k$
corresponding to the children $j,k$ of $i$, and let $w$ be the common
terminal of $G_j$, $G_k$; thus, $G_i$ has terminals $u_i, w$,
and $G_k$ has terminals $w, v_i$.
Assume wlog that $|V_j| \geq |V_k|$.
Then $|V|/3 < |V_j| \leq 2|V|/3$.
The pair $\{u_i,w\}$ of terminals of $G_j$ separates all the nodes
of $V_j \backslash \{u_i,w\}$ from all the nodes of $V \backslash V_j$,
and both these sets have less than $2|V|/3$ nodes.
Thus, $\{u_i,w\}$ has the required property.
\end{proof}

\medskip

\begin{proof}[Proof of Theorem~\ref{th:2-connected}]
Using Lemma~\ref{lem:2-connected-pair} for $q=4$, we consider two cases:
\begin{itemize}
\item[(i)] There is a separation pair $\{u,v\}\in V$ such that if $G_1,\dots,G_k$ are the connected components of $G\backslash \{u,v\}$, for any $i$, $|V_i|< 3|V|/4$. In this case Lemma~\ref{lem:parallel} for $q=4$ proves the theorem.

\item[(ii)] After a set of contractions, $G$ can be transformed into a 3-connected graph $G^*=(V^*,E^*)$ with weighted edges  such that for any edge $e^*\in E^*$, $w(e^*)<|V|/4$. In this case the proof is similar to the proof of Theorem~\ref{th:3-connected}. Notice that if $G^*$ contains a triangle then the proof is much simpler as in the proof of Lemma~\ref{lem:3-connected-tri} but here to avoid repetition, we use the approach in the proof of Theorem~\ref{th:3-connected} and prove the theorem once for all cases of $G^*$.

    Recall that for each edge in $G^*$, its weight represents the number of nodes in the contracted subgraph that it represents. So if $e^*\in E^*$ represents an induced subgraph of $G$ with $l$ nodes, then $w(e^*)=l$; and if $e^*\in E\cap E^*$ then $w(e^*)=0$. Using Lemma~\ref{lem:cut}, we can find $\{u,v,w\}\in V^*$ and a partition $(V_1^*,V_2^*)$ of $V^*$ with properties described in the Lemma. Since in $G^*$ edges have weights that actually represent nodes in $G$, we want a partition such that we also have $|V^*_2|+\sum_{e\in G[V_2^*]}w(e)\leq |V|/2$. Notice that we can modify the proof of Lemma~\ref{lem:cut} to take into account the weights for the edges and find a partition such that $|V^*_2|+\sum_{e\in G[V_2^*]}w(e)\leq |V|/2$ as follows. Again, using the algorithm presented in~\cite{cheriyan1988finding}, we can find a non-separating proper cycle $C_0$ in $G^*$ such that every node in $C_0$ has a neighbor in $G^*\backslash C_0$. We consider three cases:
    \begin{itemize}
    \item[(a)] If $|C_0|+\sum_{e\in G[C_0]}w(e)\leq |V|/2+1$, then the proof is as in the proof of Lemma~\ref{lem:cut}. Select any three consecutive nodes $(u,w,v)$ of $C_0$ and set $V_2^*=C_0\backslash \{w\}$ and $V_1^*=V^*\backslash V_2^*$.
    \item[(b)] If $|C_0|>|V^*|/2$ and $|C_0|+\sum_{e\in G[C_0]}w(e)> |V|/2+1$, then as in the proof of Lemma~\ref{lem:cut}, since $|C_0|>|V^*\backslash C_0|$ and every node is $C_0$ has a neighbor in $V^*\backslash C_0$, there exist a node $w\in V^*\backslash C_0$ such that $|N(w)\cap C_0|\geq 2$. Select two nodes $u,v\in N(w)\cap C_0$. There exists a path $P$ in $C_0$ between $u$ and $v$ such that $|P|+\sum_{e\in G[P]}w(e)<|V|/2-1$. Set $V_2^*=\{u,v\}\cup P$ and $V_1^*=V^*\backslash V_2^*$.
    \item[(c)] If $|C_0|\leq |V^*|/2$ and $|C_0|+\sum_{e\in G[C_0]}w(e)> |V|/2+1$, then if there exist a node $w\in V^*\backslash C_0$ such that $|N(w)\cap C_0|\geq 2$, the proof is as in the previous part. So assume for all $w\in V^*\backslash C_0$, $|N(w)\cap C_0|\leq 1$. Assume $w_1,w_2,\dots,w_t\in V^*\backslash C_0$ are all the nodes with $|N(w_i)\cap C_0|=1$. We show that there is a $1\leq i\leq t$, such that $w_i$ is not a cut-point of $G^*[V^*\backslash C_0]$. Let $T$ be a spanning tree of $G^*[V^*\backslash C_0]$. If there is a $1\leq i\leq t$, such that $w_i$ is a leaf of $T$, then $w_i$ is not a cut-point of $G^*[V^*\backslash C_0]$ and there is nothing left to prove. So assume none of $w_i$s is a leaf of $T$. Suppose $1\leq p,q\leq t$ are such that the path between $w_p$ and $w_q$ in $T$ is the longest between all pairs of $w_i$s. We show that $w_p$ and $w_q$ cannot be cut-points of $G^*[V^*\backslash C_0]$. Assume for example $w_p$ is a cut-point of $G^*[V^*\backslash C_0]$ and $G_{\bar{q}}^*$ is a connected component of $G^*[V^*\backslash C_0]\backslash\{w_p\}$ such that $w_q\notin G_{\bar{q}}^*$. Since the path between $w_p$ and $w_q$ in $T$ is the longest between all pairs of $w_i$s, $\forall 1\leq i\leq t: w_i\notin G_{\bar{q}}^*$, otherwise we can find a longer path from $w_q$ to some other $w_i$. On the other hand, if $\forall 1\leq i\leq t: w_i\notin G_{\bar{q}}$, in $G^*\backslash \{w_p\}$ the cycle $C_0$ is disconnected from $G_{\bar{q}}^*$ which contradicts with the 3-connectedness of $G^*$. Hence, there should be at least a noncut-point $w\in V^*\backslash C_0$ with $|N(w)\cap C_0|=1$. Since $G^*$ is 3-connected, each node has degree at least 3. Hence, there are two nodes $u,v\in N(w)\cap (V^*\backslash C_0)$. Set $V_1^*=C_0\cup\{w\}$ and $V_2^*=V^*\backslash V_1^*$. $(V_1^*,V_2^*)$ is a desirable partition for $V^*$.
    \end{itemize}
    Hence, we can find $\{u,v,w\}\in V^*$ and a partition $(V_1^*,V_2^*)$ of $V^*$ with properties described in the Lemma~\ref{lem:cut} as well as having $|V^*_2|+\sum_{e\in G[V_2^*]}w(e)\leq |V|/2$. Set $X=\{u,v,w\}$. Using Lemma~\ref{lem:embedding}, $G^*$ has a convex $X$-embedding in general position like $f^*:V^*\rightarrow \mathbb{R}^2$ as described in the lemma and depicted in Fig.~\ref{fig:3-connected}. Now, from this embedding, we get a convex $X$-embedding for $G$ like $f:V\rightarrow \mathbb{R}^2$ as follows. For any $i\in V\cap V^*$, $f(i)=f^*(i)$. For any edge $\{i,j\}\in E^*$ such that $\{i,j\}$ represents an induced subgraph of $G$, we represent it by a pseudo-path between $i$ and $j$ in $G$ like $P$ and place the nodes in $P$ in order on random places on the line segment that connects $f(i)$ to $f(j)$. If the edge $\{i,j\}\in E^*$ is between a node in $V_1^*$ and a node in $V_2^*$ and represents a pseudo-path $P$ in $G$ , we place the nodes in $P$ in order on random places on the segment that connects $f(i)$ to $f(j)$ but above the line $\mathcal{L}_1$. Hence, by this process, we get a convex $X$-embedding for $G$ which is in general position (almost surely) except for the nodes that are part of a pseudo-path (which we know have length less than $|V|/4$). Notice that if $V_2'\subset V$ contains all the nodes in $V_2^*$ and all the nodes that are part of a pseudo-path between the nodes of $V_2^*$ (represented by weighted edges in $G^*$) in $G$, and $V_1'=V\backslash V_2'$, then $(V_1',V_2')$ is partition of $G$ with all the properties of Lemma~\ref{lem:cut}. Moreover, the $X$-embedding $f$ of $G$ that we derived from $f^*$ for $G^*$, has all the properties of Lemma~\ref{lem:embedding} for the partition $(V_1',V_2')$.

    The rest of the proof is similar to the proof of Theorem~\ref{th:3-connected}. We consider again a circle $\mathcal{C}$ around $f(u),f(v),f(w)$ in $\mathbb{R}^2$ as shown in Fig.~\ref{fig:3-connected}. Also consider a directed line $\mathcal{L}$ tangent to the circle $C$ at point $A$ and project nodes of $G$ onto the line $\mathcal{L}$. With the same argument as in the proof of Theorem~\ref{th:3-connected}, since the embedding $f$ has the properties in Lemma~\ref{lem:cut} and \ref{lem:embedding}, if we set $V_1$ to be the $|V|/2$ nodes whose projections come first and $V_2$ are the $|V|/2$ nodes whose projections come last, then $G[V_1]$ and $G[V_2]$ are both connected. If $|p(V_1)|\leq 1$, then $(V_1,V_2)$ is a good partition and there is nothing left to prove. Otherwise, we move $\mathcal{L}$ from being tangent at point $A$ to point $B$ ($AB$ is a diameter of the circle $\mathcal{C}$) and consider the resulting partition. Notice that if at point $A$, $p(V_1)>0$, then at point $B$ since $V_1$ and $V_2$ completely switch places compared to the partition at point $A$,  $p(V_1)<0$. Hence, as we move $\mathcal{L}$ from being tangent at point $A$ to point $B$ and keep it tangent to the circle, in the resulting partitions, $p(V_1)$ goes from some positive value to a negative value. Notice that the partition $(V_1,V_2)$ changes only if $\mathcal{L}$ passes a point $D$ on the circle such that at $D$, $\mathcal{L}$ is perpendicular to a line that connects $f(i)$ to $f(j)$ for $i,j\in V$. Now, since the embedding is in general position except for few lines that contain a pseudo-path, if $\{i,j\}\notin E\cup E^*$ or $\{i,j\}\in E\cap E^*$ there are exactly two points on the line that connects $f(i)$ and $f(j)$, so $V_1$ changes at most by one node leaving $V_1$ and one node entering  $V_1$. Thus, in this case $p(V_1)$ changes by at most 2. However, if $\{i,j\}\in E^*$, then since there can be at most $|V|/4-1$ points on the line that connects $f(i)$ to $f(j)$, $p(V_1)$ may change by $|V|/4-1$. So let's consider that $\mathcal{L}$ is perpendicular to the line that connects $f(i)$ and $f(j)$ at point $D$ and  $p(V_1)>0$ slightly before $\mathcal{L}$ passes point $D$ and $p(V_1)\leq 0$ slightly after $\mathcal{L}$ passes point $D$. We consider 2 cases:
    \begin{itemize}
    \item[(a)] Suppose $\{i,j\}\notin E\cup E^*$ or $\{i,j\}\in E\cap E^*$. In this case since $V_1$ changes by at most $2$, therefore, $p(V_1)$ slightly after $\mathcal{L}$ passes $D$ is either $0$ or $-1$. Hence, the partition $(V_1,V_2)$ that we get after $\mathcal{L}$ passes $D$ is a good partition with $|V_1|=|V_2|=|V|/2$.
    \item[(b)] Suppose $\{i,j\}\in E^*$.  Let $(V_{b1},V_{b2})$ be the partition slightly before $\mathcal{L}$ passes $D$ and let $(V_{a1},V_{a2})$ be the partition slightly after. Assume $P_0$ is the pseudo-path between $i$ and $j$ and $P=P_0\cup\{i,j\}$. It is easy to see that $(V_{b1}\backslash P,V_{b2}\backslash P)=(V_{a1}\backslash P,V_{a2}\backslash P)$. 
        If $p(V_{b1}\backslash P)=0$, since $|P_0|<|V|/4$, then $V_1=V_{b1}\backslash P$ and $V_2= V\backslash V_1$ is a good partition and there is nothing left to prove. So either $p(V_{b1}\backslash P)> 0$ or $p(V_{b1}\backslash P)< 0$.

        If $p(V_{b1}\backslash P)> 0$, since $p(V_{a1})\leq 0$ and $V_{b1}\backslash P=V_{a1}\backslash P$, we can add a set of nodes from $P\cap V_{a1}$, say $P'$, to $V_{a1}\backslash P$ to get $p((V_{a1}\backslash P)\cup P')=0$. Now since $|P_0|<|V|/4$, $V_1=(V_{a1}\backslash P)\cup P'$ and $V_2=V\backslash V_1$ is a good partition.

        If $p(V_{b1}\backslash P)< 0$, since $p(V_{b1})> 0$, we can add a set of nodes from $P\cap V_{b1}$, say $P'$, to $V_{b1}\backslash P$ to get $p((V_{b1}\backslash P)\cup P')=0$. Now since $|P_0|<|V|/4$, $V_1=(V_{b1}\backslash P)\cup P'$ and $V_2=V\backslash V_1$ is a good partition.
    \end{itemize}
\end{itemize}
\end{proof}

\medskip

\section{Missing Proofs from Section~\ref{sec:blue_red}}\label{sec:proof3}
\begin{proof}[Proof of Corollary~\ref{cor:blue_red3}]
Suppose without loss of generality that $n_r \geq n_b$ and
let $n_r-n_b=2t$ and $n_r+ n_b =n =2m$.
Set $p(i)=1$ for $i \in R$ and $p(i)=-1$ for $i \in B$.
Then $p(V) =2t$.
From the equations, we have $n_r=m+t$ and $n_b= m-t$.

From Corollary ~\ref{cor:3-general} we can find a partition $(V_1,V_2)$
such that $|V_1|=|V_2|$ and $|p(V_1)-p(V)/2|, |p(V_1)-p(V)/2| \leq 1$.
Let $r_1 = |R \cap V_1|$ and $b_1 = |B \cap V_1|$.
We have $r_1 + b_1 = n/2 =m$ and $t-1 \leq r_1 - b_1 \leq t+1$.
Therefore, $(m+t)/2 - (1/2) \leq r_1 \leq (m+t)/2 +(1/2)$.
Since $r_1$ is an integer and $n_r=m+t$ is even,
it follows that $r_1 = (m+t)/2 = n_r/2$.
Hence, $b_1 = (m-t)/2 = n_b/2$.
Therefore, $V_2$ also contains $n_r/2$ red nodes and
$n_b/2$ blue nodes.
\end{proof}

\bibliographystyle{abbrv}
\bibliography{DBP_bib}
\end{document}